\renewcommand{\email}[2][]{%
  \ifx\emails\@empty\relax\else{\g@addto@macro\emails{,\space}}\fi%
  \@ifnotempty{#1}{\g@addto@macro\emails{\textrm{(#1)}\space}}%
  \g@addto@macro\emails{#2}%
}
\newcommand{\displaybump}{\hbox to \@totalleftmargin{\hfil}}
\newcommand{\Conv}{\operatorname{\scalebox{1.7}{\raisebox{-0.3ex}{$\ast$}}}}%
\newcommand{\Conn}{\operatorname{\scalebox{1.7}{\raisebox{-0.3ex}{$\#$}}}}%
\DeclareMathOperator\Aut{Aut}
\DeclareMathOperator\id{id}
\DeclareMathOperator\image{im}
\DeclareMathOperator\bbN{\mathbb{N}}
\DeclareMathOperator\bbZ{\mathbb{Z}}
\DeclareMathOperator\calS{\mathcal{S}}
\newtheorem{theorem}{Theorem}[section]
\newtheorem*{theorem*}{Theorem}
\newtheorem{lemma}[theorem]{Lemma}
\newtheorem*{lemma*}{Lemma}
\newtheorem{corollary}[theorem]{Corollary}
\newtheorem{proposition}[theorem]{Proposition}
\newtheorem*{proposition*}{Proposition}
\theoremstyle{definition}
\newtheorem{definition}[theorem]{Definition}
\newtheorem*{definition*}{Definition}
\newtheorem{remark}[theorem]{Remark}
\newtheorem*{remark*}{Remark}
\newtheorem{example}[theorem]{Example}
\newtheorem*{example*}{Example}
\newtheorem*{sketch of proof}{Sketch of Proof}
\newtheorem*{idea of proof}{Idea of Proof}
\newtheorem{question}[theorem]{Question}
\title[On free products of graphs]{On Free products of graphs}
\author{Max Carter}
\email{max.carter@newcastle.edu.au}
\author{Stephan Tornier}
\email{stephan.tornier@newcastle.edu.au}
\author{George Willis}
\email{george.willis@newcastle.edu.au}
\address[All authors]{The University of Newcastle, School of Mathematical and Physical Sciences, 2308 Callaghan NSW, Australia.}
\date{\today}
\begin{document}

\begin{abstract}
We define a free product of connected simple graphs that is equivalent to several existing definitions when the graphs are vertex-transitive but differs otherwise. The new definition is designed for the automorphism group of the free product to be as large as possible, and we give sufficient criteria for it to be non-discrete. Finally, we transfer Tits' classification of automorphisms of trees and simplicity criterion to free products of graphs.
\end{abstract}

\maketitle

\section{Introduction}

Every locally compact (l.c.) group $G$ is an extension of its connected component, $G_{0}$, by the totally disconnected (t.d.) quotient $G/G_{0}$:
\begin{displaymath}
 \xymatrix{
  1 \ar[r] & G_{0} \ar[r] & G \ar[r] & G/G_{0} \ar[r] & 1.
 }
\end{displaymath}
As connected l.c. groups have been seen to be inverse limits of Lie groups in fundamental work by Gleason \cite{Gle52}, Montgomery-Zippin \cite{MZ52} and Yamabe \cite{Yam53}, totally disconnected ones are now the focus in the structure theory of l.c.~groups.

Every t.d.l.c. group can be viewed as a directed union of compactly generated (c.g.) open subgroups. Among c.g.t.d.l.c. groups, those acting on (regular) graphs stand out due to the Cayley-Abels graph construction \cite{KM08}: Every c.g.t.d.l.c. group $G$ acts vertex-transitively on a connected regular graph $\Gamma$ of finite degree with compact open vertex-stabilisers. Conversely, the automorphism group of every connected locally finite graph is t.d.l.c. and has compact open vertex-stabilisers, see Section~\ref{sec:preliminaries}.

Groups acting on graphs are studied extensively \cite{BM00a} \cite{BEW15} \cite{Tor20} and play a significant role when studying the scale of a t.d.l.c.~group \cite{Wil94}, see {\it e.g.\/} \cite{BW04}, \cite{Moe02} and \cite{BT19}. It remains a frequently appearing and difficult question to determine when the automorphism group of a given (infinite) graph is non-discrete.

\vspace{0.2cm}
In this article, we define a free product $\Gamma=\Conv_{i=1}^{n}\Gamma_{i}$ of connected simple graphs $\Gamma_{1},\ldots,\Gamma_{n}$ $(n\in\bbN)$, which may be (locally) infinite. This product is associative and commutative with unit a single vertex. It realises the Cayley graph of a free product of finitely generated groups as the free product of the Cayley graphs of the factors. We compare our definition with several existing ones and see that all are equivalent when $\Gamma_{1},\ldots,\Gamma_{n}$ are vertex-transitive. The vertices and edges of $\Gamma$ are defined directly, rather than by gluing copies of $\Gamma_{1},\ldots,\Gamma_{n}$ as done in the other definitions, which facilitates computation in the graphs and also, when the graphs are not vertex-transitive, yields a free product with a larger automorphism group. We give sufficient criteria for $\Aut(\Gamma)$ to be non-discrete: Corollary~\ref{cor:aut_non_discrete_stabiliser} states that $\Aut(\Gamma)$ is non-discrete as soon as one of $\Gamma_{1},\ldots,\Gamma_{n}$ $(n\ge 2)$ has some non-trivial vertex-stabiliser, and Proposition~\ref{prop:aut_non_discrete_isomorphic} states that $\Aut(\Gamma)$ is non-discrete whenever two of the factors $\Gamma_{1},\ldots,\Gamma_{n}$ $(n\ge 3)$ are isomorphic.

Finally, we consider the subgroup $\Aut_{\calS}(\Gamma)$ of $\Aut(\Gamma)$ which preserves the sheet structure of $\Gamma$ induced by its factors. It acts on a \emph{structure tree}, akin to a block-cut tree, and thus lends itself for the transfer of Tits' classification of automorphisms of trees and simplicity theorem \cite{Tit70}, see Theorem~\ref{thm:aut_s_three_types} and Proposition~\ref{prop:aut_s_simplicity}.

\section{Preliminaries}\label{sec:preliminaries}

A \emph{graph} $\Gamma$ is a tuple $(V,E)$ consisting of a \emph{vertex set} $V$ and an \emph{edge set} $E\subseteq V^{|2|}$, where $V^{|2|}:=\{\{x,y\}\mid x,y\in V,\ x\neq y\}$. The \emph{order} of $\Gamma$ is $|V|$. For $x\in V$, we let $N_{\Gamma}(x)$ denote the set $\{y\in V\mid \{x,y\}\in E\}$ of \emph{neighbours of $x\in V$} and $E_{\Gamma}(x)$ the set $\{e\in E\mid x\in e\}$ of edges incident with $x\in V$. The \emph{valency}, or \emph{degree}, of $x\in V$ is $|N_{\Gamma}(x)|$. The graph $\Gamma$ is \emph{locally finite} if $|N_{\Gamma}(x)|$ is finite for every $x\in V$. It is \emph{regular of degree $d$} if $|N_{\Gamma}(x)|=d$ for every $x\in V$.

A \emph{path of length~$k$}, where $k\in\bbN$, in $\Gamma$ is a sequence $(e_1,\dots, e_k)$ of edges with $e_i\cap e_{i+1} = \{x_i\}$ a singleton for each $i\in \{1,\dots, k-1\}$. The path may thus also be described by the sequence $(x_0,\dots, x_{k})$ of vertices, where $e_{i}=\{x_{i-1},x_i\}$ for $i\in\{1,\ldots,k\}$, and $x_{i}\neq x_{i+2}$ for all $i\in\{0,\ldots,k-2\}$. The graph $\Gamma$ is \emph{connected} if for all $x,y\in V$ there is a path $(x=x_{0},x_{1},\ldots,x_{k}=y)$ from $x$ to $y$. It is a \emph{forest} if it does not contain paths from a vertex to itself. A \emph{tree} is a connected forest. We denote the regular tree of degree $d\in\bbN$ by $T_{d}$. Note that $T_{1}$ consists of a single edge. Given a subset $S\subseteq V$, the \emph{span of $S$ in $\Gamma$} is the graph $\mathrm{span}_{\Gamma}(S):=(S,E\cap S^{|2|})$.

Let $\Gamma=(V,E)$ be a connected graph. A vertex $x\in V$ is a \emph{cut vertex} if the graph $(V\backslash\{x\},E\backslash E_{\Gamma}(x))$, which arises from $\Gamma$ by removing $x\in V$ and all edges incident with $v$, is not connected. A graph without cut-vertices is \emph{$2$-connected}.

Existing definitions of free products involve repeatedly joining together copies of the graphs at specified, or root, vertices. Rooted graphs $\Gamma_{1}=(V_{1},E_{1},v_{1})$ and $\Gamma_{2}=(V_{2},E_{2},v_{2})$ are joined at $v_1$ and $v_2$ to obtain the \emph{connected sum} $(\Gamma_{1},v_{1})\#(\Gamma_{2},v_{2})$, which is the graph obtained from $\Gamma_{1}\sqcup\Gamma_{2}=(V_{1}\sqcup V_{2},E_{1}\sqcup E_{2})$ by identifying $v_{1}$ and $v_{2}$.

Let $\Gamma_{1}=(V_{1},E_{1})$ and $\Gamma_{2}=(V_{2},E_{2})$ be graphs. A \emph{morphism} $\varphi:\Gamma_{1}\to\Gamma_{2}$ is a map $\varphi:V_{1}\to V_{2}$ such that $\varphi(\{x,y\})\in E_{2}$ for every edge $\{x,y\}\in E_{1}$.

\begin{lemma}\label{lem:quotientmap}
Let $\Gamma\!=\!(V,E)$ and $\Gamma'\!=\!(V',E')$ be graphs and $\varphi:\!\Gamma\to\Gamma'$ a morphism. If $\Gamma'$ is connected and $\varphi(N_{\Gamma}(x))=N_{\Gamma'}(\varphi(x))$ for every $x\in V$ then $\varphi$ is surjective.
\end{lemma}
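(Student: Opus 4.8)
The plan is to show surjectivity via a connectivity argument: the image $\varphi(V)$ is a nonempty subset of $V'$, and I will prove it is "closed under taking neighbours" in $\Gamma'$, so that connectedness of $\Gamma'$ forces $\varphi(V)=V'$.

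Here are the steps. First, fix any $x\in V$ and note $\varphi(x)\in\varphi(V)$, so $\varphi(V)\neq\emptyset$; since $V\neq\emptyset$ this is fine (a morphism from the empty graph is vacuously surjective onto an empty $\Gamma'$, but if $\Gamma'$ is connected and nonempty then $\Gamma$ is nonempty, a trivial point I would note in passing). Second, I claim $\varphi(V)$ is closed under neighbours: let $y'\in V'$ with $y'\in N_{\Gamma'}(\varphi(x))$ for some $x\in V$. By hypothesis $\varphi(N_\Gamma(x))=N_{\Gamma'}(\varphi(x))$, so $y'\in\varphi(N_\Gamma(x))\subseteq\varphi(V)$; hence every $\Gamma'$-neighbour of a point of $\varphi(V)$ again lies in $\varphi(V)$. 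Third, take any $v'\in V'$ and any $x\in V$; since $\Gamma'$ is connected there is a path $(\varphi(x)=x'_0,x'_1,\dots,x'_k=v')$ in $\Gamma'$. By induction on $i$, using the closure property at each step, $x'_i\in\varphi(V)$ for all $i$; in particular $v'=x'_k\in\varphi(V)$. Since $v'$ was arbitrary, $\varphi(V)=V'$, i.e. $\varphi$ is surjective.

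The only mild subtlety is the degenerate case: if $V=\emptyset$ then $N_\Gamma(x)$ ranges over nothing and one should observe that a connected graph in this paper's convention is nonempty (or simply that the statement is only interesting, and the induction only starts, once we have one vertex to anchor it). Beyond that, the argument is entirely routine — there is no real obstacle; the key idea is simply that the condition $\varphi(N_\Gamma(x))=N_{\Gamma'}(\varphi(x))$ makes $\varphi(V)$ a nonempty union of connected components of $\Gamma'$, which must be all of $V'$.
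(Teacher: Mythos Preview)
Your proof is correct and follows essentially the same approach as the paper: fix a vertex in the image, take a path in $\Gamma'$ from its image to an arbitrary target vertex, and use the hypothesis $\varphi(N_\Gamma(x))=N_{\Gamma'}(\varphi(x))$ to inductively lift each vertex along the path into $\varphi(V)$. The paper's version is simply more terse and does not dwell on the degenerate empty case or the ``closed under neighbours'' reformulation, but the content is identical.
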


\begin{proof}
Let $x\in V$ and $x_{0}:=\varphi(x)\in V'$. Given $y\in V'$ let $(x_{0},x_{1},\ldots,x_{k}=y)$ be a path from $x_{0}$ to $y$. Since $\varphi(N_{\Gamma}(x))=N_{\Gamma'}(\varphi(x))=N_{\Gamma'}(x_{0})$ we conclude that $x_{1}\in\varphi(N_{\Gamma}(x))$, say $x_{1}=\varphi(x')$. Iterate.
\end{proof}

Let $\Gamma=(V,E)$ be a connected graph. We equip the set $\Aut(\Gamma)$ of automorphisms of $\Gamma$ with the permutation topology for its action on $V$, see \cite[Section~1.2]{Moe02}. This turns $\Aut(\Gamma)$ into a Hausdorff totally disconnected group that is locally compact when $\Gamma$ is locally finite. Given a subgroup $H\le\Aut(\Gamma)$ and a subgraph $\Gamma'\subseteq\Gamma$, the \emph{pointwise stabiliser} of $\Gamma'$ in $H$ is denoted by $H_{\Gamma'}$. Similarly, the \emph{setwise stabiliser} of $\Gamma'$ in $H$ is $H_{\{\Gamma'\}}$. Finally, the \emph{rigid stabiliser} of $\Gamma'$ in $H$ is the set $\mathrm{rist}_{H}(\Gamma'):=H_{\Gamma\backslash\Gamma'}$ of elements which pointwise stabilise the complement of $\Gamma'$ in $\Gamma$.

\section{Definition}\label{sec:def}

In Sections \ref{sec:def_mswz}, \ref{sec:def_pt} and \ref{sec:def_q} below we recall several existing definitions of free products of graphs found in the literature, all of which are in the spirit of \cite[Section 31.2]{HIK11}. The new definition given in Section \ref{sec:def_new} has the advantage that it lends itself to computations and the analysis of the free product's automorphism group. A comparison between these four definitions made in Section \ref{sec:equivalence} illuminates all of them and shows that they are equivalent in the case of vertex-transitive graphs.

\subsection{A new definition}\label{sec:def_new}

Let $\Gamma_{1}=(V_{1},E_{1}),\ldots,\Gamma_{n}=(V_{n},E_{n})$ be connected graphs. The \emph{free product} of $\Gamma_{1},\ldots,\Gamma_{n}$, denoted $\Conv_{i=1}^{n}\Gamma_{i}$, is the following graph $\Gamma=(V,E)$. Its vertex set consists of \emph{admissible} words in the alphabet $V_{1}\sqcup\cdots\sqcup V_{n}$ and its edge set is defined in terms of the last vertices in such admissible words.

\subsubsection*{Vertices} The set of admissible words of length~$l$ is denoted by $V^{(l)}$ and defined by induction on~$l$. Its definition involves an \emph{update function} $\mathbf{u}:V\to V_1\times\dots\times V_n$ defined inductively alongside $V^{(l)}$. Choose $(u_{1},\ldots,u_{n})\in V_{1}\times\dots\times V_{n}$ and set
\begin{equation}
\label{eq:initial}
V^{(0)}:=\{\emptyset\}\mbox{ and }\mathbf{u}(\emptyset):=(u_1,\dots, u_n)\in V_1\times \dots \times V_n.
\end{equation}
Assume that $V^{(k)}$ and $\mathbf{u}|_{V^{(k)}}$ are defined for $k\in\{0,\dots, l\}$. For $\tilde{v}=v_{1}\ldots v_{k}\in V^{(k)}$, let $\mathbf{u}(\tilde{v}) = \left(u_1(\tilde{v}), \dots, u_n(\tilde{v})\right)$. Then $V^{(l+1)}$ consists of all words of the form $\tilde{v}v_{l+1}$ where $\tilde{v}=v_{1}\ldots v_{l}\in V^{(l)}$ and, supposing that $v_l\in V_i$ and $v_{l+1} \in V_j$, we have 
\begin{equation}
\label{eq:admissible}
j\ne i  \mbox{ and } v_{l+1} \ne u_j(\tilde{v}).
\end{equation}
The update function is defined on $V^{(l+1)}$ by 
\begin{equation}
\label{def:update}
u_h(\tilde{v}v_{l+1}) := \begin{cases}
u_h(\tilde{v}) & \mbox{ if } h\ne j \\
v_{l+1} & \mbox{ if }h=j
\end{cases}.
\end{equation}

\subsubsection*{Edges} The edge set $E$ consists of pairs of admissible words $\{\tilde{v},\tilde{v}v_{l+1}\}$ as follows. The edges incident on the empty word are 
\begin{equation}\label{eq:emptyneighbours}
\{\emptyset, v_1\} \mbox{ where }\{u_i(\emptyset),v_1\}\in E_i \mbox{ for some } i\in\{1,\dots, n\}.
\end{equation}
The edges incident on the non-empty word $\tilde{v} = v_1\dots v_{l-1}v_l$, with $v_l\in V_i$, are 
\begin{align}
\label{eq:nonemptyneighbours1}
& \{v_1\dots v_{l-1}v_l, v_1\dots v_{l-1}v'_l\} \mbox{ for all }\{v_l,v_l'\}\in E_i\\
\label{eq:nonemptyneighbours2}
\mbox{ and }\quad &\{\tilde{v}, \tilde{v}v_{l+1}\} \mbox{ for all } \{u_j(\tilde{v}),v_{l+1}\}\in E_j\mbox{ and } j\neq i.
\end{align}

\vspace{0.2cm}
\begin{remark}\label{rem:def_obs}
While the definition above depends on the choice of~$\mathbf{u}(\emptyset)$, it will be seen in Corollary~\ref{cor:uniqueness} that two graphs defined using different choices are isomorphic. The following observations further illuminate the definition of $\Gamma$.
\begin{enumerate}[(i),series=def_remarks]
 \item By Definition~\ref{sec:def_new}~\eqref{eq:admissible}, we have $v_1\dots v_l\!\in\! V$ if and only if $v_{k}$ and $v_{k+1}$ are vertices in distinct graphs for each $k\!\in\!\{1,\dots,l-1\}$, and $v_{k+1}\!\neq\! u_j(v_1\dots v_k)$, assuming $v_{k+1}\in V(\Gamma_{j})$.
 \item\label{item:valency} By Definition~\ref{sec:def_new}~\eqref{eq:emptyneighbours}, \eqref{eq:nonemptyneighbours1} and \eqref{eq:nonemptyneighbours2} the vertex $\tilde{v}\in V$ has one neighbour for each neighbour of $u_i(\tilde{v})$ ($i\in\{1,\dots, n\}$). Hence $|N_\Gamma(\tilde{v})|=\sum_{i=1}^n|N_{\Gamma_i}(u_i(\tilde{v}))|$.
\end{enumerate}
Definitions~\ref{sec:def_new}~\eqref{eq:nonemptyneighbours1} and~\eqref{eq:nonemptyneighbours2} imply that, for $\tilde{v}=v_{1}\ldots v_{l}\in V$ with $v_l\in V_i$ and $j\neq i$, the set of vertices $\left\{\tilde{v}\right\} \cup \left\{\tilde{v}v_{l+1}\mid v_{l+1}\in V_j\setminus\{ u_j(\tilde{v})\}\right\}$ spans a subgraph $S_{(\tilde{v},j)}$ of $\Gamma$ isomorphic to $\Gamma_j$, termed the \emph{$\Gamma_j$-sheet at $\tilde{v}$}. Similarly, the subgraph $S_{(\tilde{v},i)}$ spanned by $\{v_1\dots v_{l-1}\}\cup\left\{v_1\dots v_{l-1}v'_{l}\mid  v'_l\in V_i\setminus\{ u_i(v_1\dots v_{l-1})\}\right\}$ is the \emph{$\Gamma_{i}$-sheet at $\tilde{v}$}. Elements of the set $\calS:=\{S_{(\tilde{v},i)}\mid \tilde{v}\in V(\Gamma),\ i\in\{1,\ldots,n\}\}$ are \emph{sheets} of $\Gamma$.
\begin{enumerate}[(i),resume=def_remarks]
\item Every vertex of $\Gamma$ belongs to a unique $\Gamma_j$-sheet for every $j\in\{1,\ldots,n\}$. Belonging to the same $\Gamma_j$-sheet is an equivalence relation on $V$.
\item\label{item:sheet_colouring} Every edge of $\Gamma$ belongs to a unique $\Gamma_j$-sheet for a unique $j\in\{1,\ldots,n\}$. The associated map $c:E\to\{1,\dots,n\}$ defines a colouring of $\Gamma$ which is constant on sheets. Part \ref{item:valency} states that the number of edges adjacent to $\tilde{v}\in V$ with colour~$i$ is $|N_{\Gamma_i}(\tilde{v})|$. 
\item Sheets of different graphs intersect either trivially or at a single vertex.
\item If $\tilde{v},\tilde{w}\in V$ belong to the same $\Gamma_i$-sheet, then $u_j(\tilde{v}) = u_j(\tilde{w})$ for all $j\ne i$.
\end{enumerate}
\end{remark}

\begin{example}
This example illustrates the genesis of the free product $\Gamma$ of the graphs in Figure~\ref{fig:gen1}.

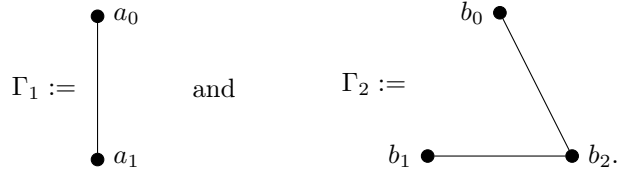
\begin{figure}[H]
\begin{center}
\begin{tikzpicture}[scale=0.95]
\node (G) at (-1.5,1) [shape=circle,label=0:{\normalsize $\Gamma_{1}:=$}] {};
\node (3) at (0,2) [shape=circle,draw, fill=black!100,scale=0.5,label={[label distance=0cm]0:{\normalsize $a_0$}}] {};
\node (4) at (0,0) [shape=circle,draw, fill=black!100,scale=0.5,label={[label distance=0cm]0:{\normalsize $a_1$}}] {};
\node (A) at (1,1) [shape=circle,label=0:{\normalsize $\text{and}$}] {};
\draw[-] (3) -- (4);
\end{tikzpicture}
\qquad
\begin{tikzpicture}[scale=0.95]
\node (G) at (-2.5,1) [shape=circle,label=0:{\normalsize $\Gamma_{2}:=$}] {};
\node (0) at (0,2) [shape=circle,draw, fill=black!100,scale=0.5,label=180:$b_{0}$] {};
\node (2) at (1,0) [shape=circle,draw, fill=black!100,scale=0.5,label={[label distance=0cm]0:{\normalsize $b_2.$}}] {};
\node (1) at (-1,0) [shape=circle,draw, fill=black!100,scale=0.5,label={[label distance=0cm]180:{\normalsize $b_1$}}] {};
\draw[-] (1) -- (2);
\draw[-] (2) -- (0);
\end{tikzpicture}
\end{center}
\caption{The graphs $\Gamma_{1}$ and $\Gamma_{2}$.}
\label{fig:gen1}
\end{figure}

\noindent
Starting from $\{\emptyset\}=V^{(0)}\subset V(\Gamma)$ and $\mathbf{u}(\emptyset)=(a_{0},b_{0})$, we obtain Figure~\ref{fig:gen2}
\begin{figure}[H]
\begin{center}
\begin{tikzpicture}[scale=0.95]
\node (0) at (-1,-1) [shape=circle,draw,scale=0.5,fill=black!100,label={[label distance=0cm]90:{\normalsize $b_1$}}] {};
\node (1) at (1,-1) [shape=circle,draw,scale=0.5,fill=black!100,label={[label distance=0cm]45:{\normalsize $b_2$}}] {};
\node (2) at (0,1) [shape=circle,scale=0.8] {$\emptyset$};
\node (3) at (0,3) [shape=circle,draw,scale=0.5,fill=black!100,label={[label distance=0cm]225:{\normalsize $a_1$}}] {};
\node (A) at (2,1) [shape=circle,label=0:{\normalsize $\text{and}$}] {};
\draw[-] (1) -- (2);
\draw[-] (2) -- (3);
\end{tikzpicture}
\quad
\begin{tikzpicture}[scale=0.95]
\node (0) at (-1,-1) [shape=circle,draw,scale=0.5,fill=black!100,label={[label distance=0cm]90:{\normalsize $b_1$}}] {};
\node (1) at (1,-1) [shape=circle,draw,scale=0.5,fill=black!100,label={[label distance=0cm]45:{\normalsize $b_2$}}] {};
\node (2) at (0,1) [shape=circle,scale=0.8] {$\emptyset$};
\node (3) at (0,3) [shape=circle,draw,scale=0.5,fill=black!100,label={[label distance=0cm]225:{\normalsize $a_1$}}] {};
\draw[-] (0) -- (1);
\draw[-] (1) -- (2);
\draw[-] (2) -- (3);
\end{tikzpicture}
\end{center}
\caption{The first two steps in the genesis of $\Gamma_{1}\ast\Gamma_{2}$.}
\label{fig:gen2}
\end{figure}
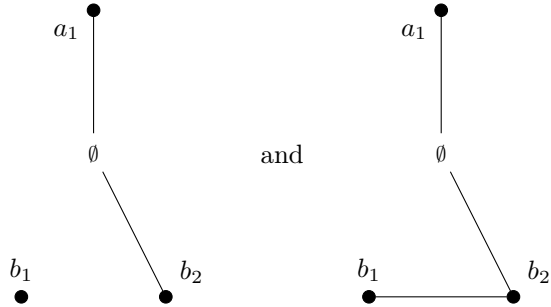

\noindent
by first adding in $V^{(1)}$ and edges incident on $\emptyset\in V(\Gamma)$, and then edges of type \eqref{eq:nonemptyneighbours1}. Here, $\textbf{u}(a_1)=(\textcolor{blue}{a_1}, b_0)$, $\textbf{u}(b_1)=(a_0,\textcolor{blue}{b_1})$ and $\textbf{u}(b_2)=(a_0,\textcolor{blue}{b_2})$. After adding in $V^{(2)}$ and edges of type \eqref{eq:nonemptyneighbours2} for vertices in $V^{(1)}$ we have the graph of Figure~\ref{fig:gen3}.
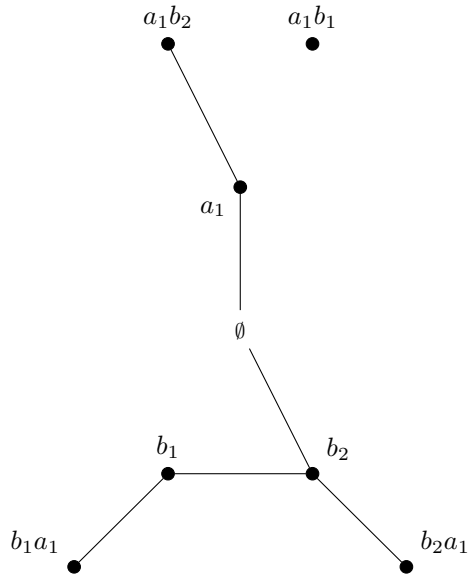
\begin{figure}[H]
\begin{center}
\begin{tikzpicture}[scale=0.95]
\node (0) at (-1,-1) [shape=circle,draw,scale=0.5,fill=black!100,label={[label distance=0cm]90:{\normalsize $b_1$}}] {};
\node (1) at (1,-1) [shape=circle,draw,scale=0.5,fill=black!100,label={[label distance=0cm]45:{\normalsize $b_2$}}] {};
\node (2) at (0,1) [shape=circle,scale=0.8] {$\emptyset$};
\node (3) at (0,3) [shape=circle,draw,scale=0.5,fill=black!100,label={[label distance=0cm]225:{\normalsize $a_1$}}] {};
\node (4) at (-1,5) [shape=circle,draw,scale=0.5,fill=black!100,label={[label distance=0cm]90:{\normalsize $a_1b_2$}}] {};
\node (5) at (1,5) [shape=circle,draw,scale=0.5,fill=black!100,label={[label distance=0cm]90:{\normalsize $a_1b_1$}}] {};
\node (6) at (2.3,-2.3) [shape=circle,draw,scale=0.5,fill=black!100,label={[label distance=0cm]45:{\normalsize $b_2a_1$}}] {};
\node (7) at (-2.3,-2.3) [shape=circle,draw,scale=0.5,fill=black!100,label={[label distance=0cm]135:{\normalsize $b_1a_1$}}] {};
\draw[-] (0) -- (1);
\draw[-] (1) -- (2);
\draw[-] (3) -- (4);
\draw[-] (2) -- (3);
\draw[-] (1) -- (6);
\draw[-] (0) -- (7);
\end{tikzpicture}
\end{center}
\caption{Another step in the genesis of $\Gamma_{1}\ast\Gamma_{2}$.}
\label{fig:gen3}
\end{figure}

\noindent
Now, $\textbf{u}(a_1b_1)\!=\!(a_1,\textcolor{blue}{b_1})$, $\textbf{u}(a_1b_2)\!=\!(a_1, \textcolor{blue}{b_2})$, $\textbf{u}(b_1a_1)\!=\!(\textcolor{blue}{a_1},b_1)$ and $\textbf{u}(b_2a_1)\!=\!(\textcolor{blue}{a_1},b_2)$.
\end{example}

\begin{example} Part \ref{item:cayley_graphs} of this example can be seen as motivating the definition of free products of graphs.
\begin{enumerate}[(i)]
\item Let $\Gamma_{i} = \pspicture(-0.1,-0.075)(1.1,0)\psdots(0,0)(1,0)\psline(0,0)(1,0)\rput(0.5,0.15){\scriptsize{$i$}}\endpspicture$, $i\in\{1,\dots,n\}$. Then $\Gamma=\Conv_{i=1}^{n}\Gamma_{i}$ is a tree with regular valency $n$ and the map of Remark \ref{rem:def_obs}\ref{item:sheet_colouring} defines a legal labelling of the tree in the sense of Burger--Mozes, see \cite[Section 3.2]{BM00a}. More generally, if $\Gamma_i$ is a regular tree of valency $d_i$, then $\Gamma$ is a regular tree of valency $\sum_{i=1}^{n}d_{i}$ and each vertex is adjacent to $d_i$ edges coloured~$i$.
\item\label{item:cayley_graphs} For finitely generated groups $(G_1,S_{1})$ and $(G_2,S_{2})$, the free product of the Cayley graphs of $G_{1}$ and $G_{2}$ is isomorphic to the Cayley graph of $G_{1}\!\ast\! G_{2}$:
\begin{displaymath}
  \displaybump \Gamma(G_1, S_1) \ast \Gamma(G_2, S_2) \cong \Gamma( G_1 \ast G_2, S_1 \sqcup S_2).
\end{displaymath}
Indeed, apply the definition by Pisanski--Tucker in Section \ref{sec:def_pt} to $\Gamma(G_{1},S_{1})$ and $\Gamma(G_{2},S_{2})$, rooted at the identity elements $e_{1}\in G_{1}$ and $e_{2}\in G_{2}$. In both $\Gamma(G_1, S_1) \ast \Gamma(G_2, S_2)$ and $\Gamma( G_1 \ast G_2, S_1 \sqcup S_2)$, the vertices are alternating words $w$ in $G_{1}\backslash\{e_{1}\}\sqcup G_{2}\backslash\{e_{2}\}$, and, considering multiplication with reduction in $G_{1}\ast G_{2}$, edges are precisely given by $\{w,ws\}$ for $s \in S_1 \sqcup S_2$.
\end{enumerate}
\end{example}

\begin{proposition}\label{prop:connected}
Let $\Gamma_{1},\ldots,\Gamma_{n}$ be connected graphs. Then $\Conv_{i=1}^{n}\Gamma_{i}$ is connected.
\end{proposition}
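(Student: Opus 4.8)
The plan is to show that every admissible word $\tilde v = v_1 \ldots v_l \in V$ is connected to the empty word $\emptyset$ by a path in $\Gamma$, by induction on the length $l$. The base case $l = 0$ is trivial. For the inductive step, suppose the claim holds for all admissible words of length at most $l$, and let $\tilde v = v_1 \ldots v_l v_{l+1} \in V^{(l+1)}$, with $v_l \in V_i$ and $v_{l+1} \in V_j$ (so $j \neq i$ and $v_{l+1} \neq u_j(\tilde w)$ where $\tilde w := v_1 \ldots v_l$). By the inductive hypothesis, $\tilde w$ is connected to $\emptyset$, so it suffices to connect $\tilde v = \tilde w v_{l+1}$ to $\tilde w$ by a path.

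First I would observe that $u_j(\tilde w) \in V_j$ is connected to $v_{l+1} \in V_j$ by a path $(u_j(\tilde w) = w_0, w_1, \ldots, w_m = v_{l+1})$ in $\Gamma_j$, since $\Gamma_j$ is connected. I then want to lift this path to a path in $\Gamma$ running through $\Gamma_j$-sheet vertices attached at $\tilde w$. Concretely, using edge rule \eqref{eq:nonemptyneighbours2}, $\tilde w$ is adjacent in $\Gamma$ to $\tilde w w_1$ whenever $\{u_j(\tilde w), w_1\} \in E_j$ — provided $w_1 \neq u_j(\tilde w)$, which holds since $w_1$ is the next vertex on a reduced path. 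Then within the $\Gamma_j$-sheet $S_{(\tilde w, j)}$, which by Remark~\ref{rem:def_obs} is isomorphic to $\Gamma_j$ (with $\tilde w$ playing the role of $u_j(\tilde w)$ and $\tilde w w'$ the role of $w' \in V_j \setminus \{u_j(\tilde w)\}$), edge rule \eqref{eq:nonemptyneighbours1} gives edges $\{\tilde w w_k, \tilde w w_{k+1}\}$ for all consecutive pairs with $w_k, w_{k+1} \neq u_j(\tilde w)$. The only subtlety is that the path in $\Gamma_j$ might revisit $u_j(\tilde w)$; but since $\Gamma_j$ is connected I may choose a path and, if it passes through $u_j(\tilde w)$ again, truncate it so that $w_0 = u_j(\tilde w)$ is visited only at the start. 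This yields a path $(\tilde w, \tilde w w_1, \tilde w w_2, \ldots, \tilde w w_m = \tilde w v_{l+1} = \tilde v)$ in $\Gamma$, completing the induction.

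The main obstacle — though a mild one — is bookkeeping about when the auxiliary path in $\Gamma_j$ is allowed to meet $u_j(\tilde w)$: the vertex $\tilde w$ itself is identified with $u_j(\tilde w)$ in the sheet, while $\tilde w w_k$ for $w_k \neq u_j(\tilde w)$ are genuinely new words, so one must ensure the chosen $\Gamma_j$-path does not force a nonexistent word of the form $\tilde w\, u_j(\tilde w)$. Choosing a path in $\Gamma_j$ from $u_j(\tilde w)$ to $v_{l+1}$ that does not return to $u_j(\tilde w)$ resolves this, and such a path exists by connectedness (take any path and delete the loop returning to $u_j(\tilde w)$). With this in hand, concatenating the path from $\emptyset$ to $\tilde w$ with the path from $\tilde w$ to $\tilde v$ gives the result; since $\tilde v \in V$ was arbitrary and $\Gamma$ has a path between any vertex and $\emptyset$, it is connected.
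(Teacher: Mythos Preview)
Your argument is correct and is essentially the same as the paper's: both proceed by induction on the word length $l$, use the induction hypothesis to connect the prefix $\tilde{w}=v_1\ldots v_l$ to $\emptyset$, and then use connectedness of $\Gamma_j$ to find a path from $\tilde{w}$ to $\tilde{w}v_{l+1}$ inside the $\Gamma_j$-sheet $S_{(\tilde{w},j)}$. The paper simply invokes the sheet isomorphism $S_{(\tilde{w},j)}\cong\Gamma_j$ from Remark~\ref{rem:def_obs} rather than spelling out the lifting and the non-return-to-$u_j(\tilde{w})$ bookkeeping, but the content is identical.
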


\begin{proof}
Let $(V,E):=\Conv_{i=1}^{n}\Gamma_{i}$. We show by induction on $l\ge 0$ that there is a path from $\emptyset\in V$ to $\tilde{v}=v_{1}\ldots v_{l}\in V$. If $l=0$ then $\tilde{v}=\emptyset$ and the statement holds. Now consider $\tilde{v}v_{l+1}\in V$. By the induction hypothesis, there is a path from $\emptyset$ to $v_{1}\ldots v_{l}$. Now, let $j\in\{1,\ldots,n\}$ be such that $v_{l+1}\in V(\Gamma_{j})$. Then both $\tilde{v}$ and $\tilde{v}v_{l+1}$ belong to the $\Gamma_{j}$-sheet $S_{(\tilde{v},j)}$. Since $\Gamma_{j}$ is connected there is a path from $\tilde{v}$ to $\tilde{v}v_{l+1}$ within that sheet. Concatenation yields the desired path from $\emptyset$ to $\tilde{v}v_{l+1}$.
\end{proof}

Note that the free product construction is commutative with unit a single vertex. Thus, \emph{we shall henceforth assume that no factor of a free product is a single vertex}. We show in Corollary~\ref{cor:associative} that it is also associative. However, while the graphs
\begin{displaymath}
 (\Gamma_{1}\ast\Gamma_{2})\ast\Gamma_{3},\ \Conv_{i=1}^{3}\Gamma_{i} \text{ and } \Gamma_{1}\ast(\Gamma_{2}\ast\Gamma_{3})
\end{displaymath}
are pairwise isomorphic, the sheet structure of a free product does depend on the bracketing, or whether no bracketing is specified: The graph $\Conv_{i=1}^{3}\Gamma_{i}$ has sheets of types $\Gamma_{1}$, $\Gamma_{2}$ and $\Gamma_{3}$ whereas $\Gamma_{1}\ast(\Gamma_{2}\ast\Gamma_{3})$ has sheets of type $\Gamma_{1}$ and $\Gamma_{2}\ast\Gamma_{3}$, and $(\Gamma_{1}\ast\Gamma_{2})\ast\Gamma_{3}$ has sheets of type $\Gamma_{1}\ast\Gamma_{2}$ and $\Gamma_{3}$. This is most relevant in Section~\ref{sec:aut_s}.

\subsection{M{\"o}ller-Seifter-Woess-Zemljic}\label{sec:def_mswz}

In \cite{MSWZ18}, M{\"o}ller {\it et al.\/} define a free product of connected, vertex-transitive graphs. Let $\Gamma_{1}=(V_{1},E_{1})$ and $\Gamma_{2}=(V_{2},E_{2})$ be connected, vertex-transitive graphs of order $m$ and $n$ respectively.

The free product is a connected sum of copies of $\Gamma_1$ and $\Gamma_2$. The copies are indexed by the vertices of the $(m,n)$-biregular tree $T_{m,n}$: bipartition $V(T_{m,n})=V_{m}\sqcup V_{n}$ into vertices of degree $m$ and $n$ respectively and assign a copy, $\smash{\Gamma_{1}^{(v)}}$, of $\Gamma_{1}$ for every $v\in V_{m}$ and a copy, $\smash{\Gamma_{2}^{(w)}}$, of $\Gamma_{2}$ for every $w\in V_{n}$. The sum is indexed by the edges of $T_{m,n}$: choosing bijections $\psi_{v}:V^{(v)}_{1}\to E_{T_{m,n}}(v)$, for every $v\in V_{m}$, and $\psi_{w}:V^{(w)}_{2}\to E_{T_{m,n}}(w)$, for every $w\in V_{n}$, associates a unique vertices in $V^{(v)}_{1}$ and $V^{(w)}_{2}$ to each edge $\{v,w\}\in E(T_{m,n})$. Then $\Gamma_{1}\ast\Gamma_{2}$ is the graph $\Gamma=(V,E)$
\begin{equation}
\label{eq:MSWZ}
 \underset{\{v,w\}\in E(T_{m,n})}{\Conn}\left(\Gamma_{1}^{(v)},\psi_{v}^{-1}(\{v,w\})\right)\#\left(\Gamma_{2}^{(w)},\psi_{w}^{-1}(\{v,w\})\right).
\end{equation}
The connected sum construction implies that $V(\Gamma)$ is in bijection with $E(T_{m,n})$ with $x\in V(\Gamma)$ corresponding to $\{v,w\}\in E(T_{m,n})$, which is formed by identifying $\psi_{v}^{-1}(\{v,w\})\in \Gamma_{1}^{(v)}$ with $\psi_{w}^{-1}(\{v,w\})\in \Gamma_{2}^{(w)}$. There is thus a $2$-$1$ map from the disjoint union of the graphs $\smash{\Gamma_{1}^{(v)}}$ and $\smash{\Gamma_{2}^{(w)}}$ to $V(\Gamma)$, and $V(\smash{\Gamma_{1}^{(v)}})$ and $V(\smash{\Gamma_{2}^{(w)}})$ will hence be regarded as being subsets of $V(\Gamma)$. The neighbours of $x$ are the edges $\psi_{v}(p)\in E_{T_{m,n}}(v)$, with $p\in V(\Gamma_{1}^{(v)})$ a neighbour of $\psi_{v}^{-1}(\{v,w\})$ and $\psi_{w}(q)\in E_{T_{m,n}}(w)$, with $q\in V(\Gamma_{2}^{(w)})$ a neighbour of $\psi_{w}^{-1}(\{v,w\})$.

In a similar spirit, Mohar \cite[Section 2]{Moh06} defines a free product of graphs with amalgamation, in which maps analogous to the maps $\psi$ above are used to amalgamate given subsets of $V(\Gamma_{1})$ and $V(\Gamma_{2})$.

\subsection{Pisanski-Tucker}\label{sec:def_pt}

In \cite{PT02}, Pisanski-Tucker define a free product of rooted graphs. Let $\Gamma_{1}=(V_{1},E_{1},v_{1})$ and $\Gamma_{2}=(V_{2},E_{2},v_{2})$ be rooted graphs. The free product $\Gamma_{1}\ast\Gamma_{2}$ is the graph $\Gamma=(V,E)$ with vertex set all finite words in the alphabet $V_{1}\backslash\{v_{1}\}\cup V_{2}\backslash\{v_{2}\}$ whose letters alternate between $V_{1}\backslash\{v_{1}\}$ and $V_{2}\backslash\{v_{2}\}$. Two vertices $w$ and $w'$ are adjacent if either $w'=w x$ for a neighbour $x$ of the respective root, or $w=ux$ and $w'=uy$ for adjacent $x,y$ in $\Gamma_1$ or $\Gamma_2$.

Thus multiple copies of $\Gamma_{1}$ and $\Gamma_{2}$ are connected by identifying each vertex in a copy of $\Gamma_{1}$ with the root in a copy of $\Gamma_{2}$, and each vertex in a copy of $\Gamma_{2}$ with the root in a copy of $\Gamma_{1}$.

\subsection{Quenell}\label{sec:def_q}

In \cite{Que94}, Quenell defines a free product of rooted graphs. Let $\Gamma_{1}=(V_{1},E_{1},v_{1}),\ldots,\Gamma_{n}=(V_{n},E_{n},v_{n})$ be rooted graphs. Recursively define graphs $B_{i}$ for $i\in\{1,\ldots,n\}$ by setting
\begin{displaymath}
 B_{i}=\underset{v\in V_{i}\backslash\{v_{i}\}}{\Conn}(\Gamma_{i},v)\#(B_{2},v_{2})\#\cdots\#\widehat{(B_{i},v_{i})}\#\cdots\#(B_{n},v_{n}),
\end{displaymath}
where $\widehat{(B_{i},v_{i})}$ means that $(B_{i},v_{i})$ is omitted from the connected sum. Finally, set $\Conv_{i=1}^{n}\Gamma_{i}:=(B_{1},v_{1})\#\cdots\#(B_{n},v_{n})$. 

When $n=2$, the graph $B_1$ in the Quenell construction corresponds to those words in the Pisanski-Tucker construction that are empty or begin with a letter in $V_{1}\backslash\{v_{1}\}$, and $B_2$ corresponds to those words that are empty or begin with a letter in $V_{2}\backslash\{v_{2}\}$. The Quenell construction connects $B_1$ and $B_2$ at their respective vertices corresponding to the empty word.

\section{Uniqueness and Quotients}\label{sec:uniqueness_quotients}

Based on Remark \ref{rem:def_obs} we define the following class of quotients of the free product $\Conv_{i=1}^{n}\Gamma_{i}$. It includes the free product itself as well as the cartesian product $\prod_{i=1}^{n}\Gamma_{i}$.

\begin{definition}\label{def:mashup}
Let $\Gamma_1$, \dots, $\Gamma_n$ be graphs. A \emph{mashup} of $\Gamma_1$, \dots, $\Gamma_n$ is a connected graph $M\!=\!(V,E)$ with embeddings $\varphi_{(x,i)}:\Gamma_{i}\to M$ ($x\!\in\! V,\ i\!\in\!\{1,\ldots,n\}$) such that
\begin{enumerate}[(a)]
\item\label{def:mashup_a}
$x\in \varphi_{(x,i)}(\Gamma_i)$, the \emph{$\Gamma_{i}$-sheet at $x$},\
\item\label{def:mashup_b}
$E=\bigcup\left\{ \varphi_{(x,i)}(E(\Gamma_i)) \mid i\in \{1,\dots,n\},\ x\in V\right\}$,
\item\label{def:mashup_c}
for all $i,j\in\{1,\dots, n\}$ and $x,y\in V$ the subgraphs $\varphi_{(x,i)}(\Gamma_i)$ and $\varphi_{(y,j)}(\Gamma_j)$ are either identical and $i=j$, have empty intersection, or intersect at a single vertex and $i\neq j$; and 
\item\label{def:mashup_d} for all $i,\, j\in\{1,\dots, n\}$ with $i\ne j$, $x\in V$ and $y\in V(\varphi_{(x,i)}(\Gamma_{i}))$ the maps
\begin{align*}
 \Gamma_i \to M\to \Gamma_i:&\ v \mapsto \varphi_{(x,i)}(v) \mapsto \varphi^{-1}_{(y,i)}(\varphi_{(x,i)}(v)) \\
 \displaybump \Gamma_i \to M\to \Gamma_j:&\ v \mapsto \varphi_{(x,i)}(v) \mapsto \varphi^{-1}_{(\varphi_{(x,i)}(v),j)}(\varphi_{(x,i)}(v))
\end{align*}
are constant.
\end{enumerate}
\end{definition}

\begin{remark}\label{rem:mashup}
Retain the notation of Definition~\ref{def:mashup}.
\begin{enumerate} 
\item\label{rem:mashup_1}
Conditions~\ref{def:mashup_a} and \ref{def:mashup_d} imply that every vertex of $M$ belongs to a unique $\Gamma_{i}$-sheet for every $i\in\{1,\ldots,n\}$.
\item\label{rem:mashup_2}
Conditions~\ref{def:mashup_b} and \ref{def:mashup_c} imply that every edge of $M$ belongs to a unique $\Gamma_{i}$-sheet for a unique $i\in\{1,\ldots,n\}$.
\item\label{rem:mashup_3}
Conditions~\ref{def:mashup_a} and \ref{def:mashup_d} provide a map $\mathbf{u}:V\to V(\Gamma_1)\times\dots\times V(\Gamma_n)$ defined by $\smash{\mathbf{u}(x)=(\varphi^{-1}_{(x,1)}(x),\dots, \varphi^{-1}_{(x,n)}(x))}$ such that all coordinates $u_{j}$ of $\mathbf{u}$ except the $i$-th one are constant on $\varphi_{(x,i)}(\Gamma_i)$. 
\item By conditions~\ref{def:mashup_b},\ref{def:mashup_c} and \ref{def:mashup_d}, every $x\in V$ has $|N_{\Gamma_i}(u_i(x))|$ neighbours in its $\Gamma_i$-sheet. The valency of~$x$ in $M$ is therefore given as in Remark\ref{rem:def_obs}\eqref{item:valency}.
\item\label{rem:mashup_4}
Conditions~\ref{def:mashup_a} and~\ref{def:mashup_d} imply~\ref{def:mashup_c}.
\end{enumerate}
\end{remark}

\begin{example}\label{ex:mashup}
Let $\Gamma_{1},\ldots,\Gamma_{n}$ be connected graphs. Examples of mashups are:
\begin{enumerate}[(i)]
 \item\label{item:mashup_free_product} The free product $\Gamma:=\Conv_{i=1}^{n}\Gamma_{i}$. For $x\in V(\Gamma)$ and $i\in\{1,\ldots,n\}$ define the map $\varphi_{(x,i)}:\Gamma_{i}\to\Gamma$ using the sheet structure of $\Gamma$ given in Remark~\ref{rem:def_obs}.
 \item\label{item:mashup_cartesian_product} The cartesian product $\Gamma:=\prod_{i=1}^{n}\Gamma_{i}$. Recall that $V(\Gamma)=\prod_{i=1}^{n}V(\Gamma_{i})$. For $x=(x_{1},\ldots,x_{n})\in V(\Gamma)$ and $i\in\{1,\ldots,n\}$, define the map $\varphi_{(x,i)}:\Gamma_{i}\to\Gamma$ by $\varphi_{(x,i)}(y):=(x_{1},\ldots,x_{i-1},y,x_{i+1},\ldots,x_{n})\in V(\Gamma)$ for every $y\in V(\Gamma_{i})$. In particular, $\mathbf{u}=\id$ in this case.
 \item\label{item:mashup_mixture} Parts~\ref{item:mashup_free_product} and~\ref{item:mashup_cartesian_product} can be combined via the following construction which resembles that of right-angled Artin groups, see {\it e.g.\/} \cite{Cha07}. Let $B$ be a non-empty, finite graph with vertex set $V(B)=\{1,\ldots,n\}$ and let $\Gamma_{i}=(V_{i},E_{i})$ be a connected graph for every $i\in V(B)$. We outline the construction of a mash-up $M_{B}=(V,E)$ of the $\Gamma_{i}$ $(i\in V(B))$ such that $M_{B}=\Conv_{i\in V(B)}\Gamma_{i}$ when $B$ has no edges, and $M_{B}=\prod_{i\in V(B)}\Gamma_{i}$ when $B$ is complete. The vertex set of $M_{B}$ consists of certain admissible words in the alphabet $V_{1}\sqcup\cdots\sqcup V_{n}$, forming a strict subset of those defined in Section~\ref{sec:def_new}.
 
 \vspace{0.2cm}
 \noindent
 \emph{Vertices}. The set of admissible words of length $l$ is denoted by $V^{(l)}$ and defined by induction on $l$. It involves an update function $\mathbf{u}\!:V\!\to V_{1}\times\cdots\times V_{n}$ defined inductively alongside $V^{(l)}$. Choose $(u_{1},\ldots,u_{n})\in V_{1}\times\cdots\times V_{n}$. Set
 \begin{displaymath}
  \displaybump V^{(0)}:=\{\emptyset\} \text{ and } \mathbf{u}(\emptyset):=(u_{1},\ldots,u_{n})\in V_{1}\times\cdots\times V_{n}.
 \end{displaymath}
 Now, assume that $V^{(k)}$ and $\mathbf{u}|_{V^{(k)}}$ are defined for $k\in\{0,\ldots,l\}$. Given a vertex $\tilde{v}=v_{1}\ldots v_{k}\in V^{(k)}$, let $\mathbf{u}(\tilde{v})=(u_{1}(\tilde{v}),\ldots,u_{n}(\tilde{v}))$. The set $V^{(l+1)}$ consists of all words of the form $\tilde{v}v_{l+1}$ where $\tilde{v}=v_{1}\ldots v_{l}\in V^{(l)}$ and, supposing that $v_{l}\in V_{i}$ and $v_{l+1}\in V_{j}$ we have
 \begin{displaymath}
  \displaybump j\neq i,\ v_{l+1}\neq u_{j}(\tilde{v}) \text{ and } j>i \text{ whenever } \{i,j\}\in E(B).
 \end{displaymath}
 The update function is defined on $V^{(l+1)}$ by 
 \begin{displaymath}
  \displaybump u_{h}(\tilde{v}v_{l+1}):=\begin{cases}u_{h}(\tilde{v}) & \text{if } h\neq j \\ v_{l+1} & \text{if } h=j\end{cases}.
 \end{displaymath}
 
 \vspace{0.2cm}
 \noindent
 \emph{Sheets}. To define the edges of $M_{B}$ and show that it is a mash-up of the $\Gamma_{i}$ $(i\in V(B))$, we define its sheets: For $\tilde{v}=v_{1}\ldots v_{l}\in V$ with $v_{l}\in V_{i}$ and $j\in\{1,\ldots,n\}$, the $\Gamma_{j}$-sheet at $\tilde{v}$ has the following vertex set $S\subseteq V$, which is naturally turned into a copy of $\Gamma_{j}$.
 \begin{enumerate}[$\bullet$,leftmargin=0.33cm]
  \item If $j=i$, let $S:=\{v_1\dots v_{l-1}\}\cup\left\{v_1\dots v_{l-1}v'_{l}\mid  v'_l\in V_j\setminus\{u_j(v_1\dots v_{l-1})\}\right\}$.
  \item If $j>i$ and $\{i,j\}\in E(B)$, or $\{i,j\}\not\in E(B)$, set
  \begin{displaymath}
   \displaybump S:=\{\tilde{v}\}\cup\{\tilde{v}v_{l+1}\mid v_{l+1}\in V_{j}\backslash\{u_{j}(\tilde{v})\}\}.
  \end{displaymath}
  \item If $j<i$ and $\{i,j\}\in E(B)$, define $\iota(v_{m})\in\{1,\ldots,n\}$ $(m\in\{1,\ldots,l\})$ as the unique index such that $v_{m}\in\Gamma_{\iota(v_{m})}$. Now, let $k\in\{1,\ldots,l\}$ be minimal such that $\iota(v_{k})\ge j$ and $\{\iota(v_{r}),j\}\in E(B)$ for all $r\!\in\!\{k,\ldots,l-1\}$. If $\iota(v_{k})=j$, then $k\neq l$ and we set
  \begin{displaymath}
   \displaybump S:=\{\tilde{v}\}\cup\{v_{1}\ldots v_{k-1}v'v_{k+1}\ldots v_{l}\mid v'\in V_{j}\backslash\{u_{j}(\tilde{v})=v_{k}\}\}.
  \end{displaymath}
  Otherwise, if $\iota(v_{k})>j$, then $k\le l$ and we set
  \begin{displaymath}
   \displaybump S:=\{\tilde{v}\}\cup\{v_{1}\ldots v_{k-1}v'v_{k}\ldots v_{l}\mid v'\in V_{j}\backslash\{u_{j}(\tilde{v})\}\}.
  \end{displaymath}
 \end{enumerate}
 In each of the above cases, the set $S$ is in natural bijection with $V_{j}$. The edges of the $\Gamma_{j}$-sheet at $\tilde{v}$ are then defined using $E_{j}$ via said bijection.
 
 \vspace{0.2cm}
 \noindent
 For example, one can see that the Cayley graph of the right-angled Artin group $A:=\{s_{1},\ldots,s_{n}\mid \{[s_{i},s_{j}]\mid \{i,j\}\in E(B)\}\}$ defined via the base graph $B$ is isomorphic to $M_{B}$ where $\Gamma_{i}=T_{2}$ for all $i\in\{1,\ldots,n\}$.
\end{enumerate}
\end{example}

Now, let $M$ be a mashup of $\Gamma_1,\ldots,\Gamma_n$ and let $\gamma=(e_{1},\ldots,e_{k})$ be a path in $M$. By Remark~\ref{rem:mashup}\eqref{rem:mashup_2}, there is a unique index $c(e_{m})\in\{1,\ldots,n\}$ for every edge~$e_m$ in $\gamma$ such that~$e_m$ belongs to a $\Gamma_{c(e_{m})}$-sheet of $M$. Write $e_{m}=\{x_{m-1},x_{m}\}\in E(\Gamma_{c(e_{m})})$. We say that $\gamma$ \emph{transitions from $\Gamma_i$ to $\Gamma_j$ at~$m$} if $c(e_m)=i$, $c(e_{m+1})=j$ and $i\ne j$. Then $x_m$ is a \emph{transition point in~$M$} and $\smash{\varphi_{(x_m,i)}^{-1}(x_m)}$ is a \emph{transition point in $\Gamma_i$}.

In the case $\smash{M=\Conv_{i=1}^n \Gamma_i}$, there is a path from the empty word $\emptyset$ to $\tilde{v}=v_1\dots v_l$ with transitition points $v_k$ ($k\in\{1,\dots,l-1\}$). If $v_k\in V(\Gamma_{i_k})$, then the path goes from $v_1\dots v_{k-1}$ to $v_1\dots v_k$ in a $\Gamma_{i_k}$-sheet and transitions into a $\Gamma_{i_{k+1}}$-sheet at $v_{1}\ldots v_k$. Denote $v_1\dots v_k$ by $x_k$. The length of the shortest path from $\emptyset$ to $\tilde{v}$ now is
$$
d(\emptyset, \tilde{v}) = \sum_{k=1}^l d_{\Gamma_{i_k}}(\varphi^{-1}_{(x_{k},i_k)}(x_{k-1}),\varphi^{-1}_{(x_k,i_k)}(x_k)) = \sum_{k=1}^l d_{\Gamma_{i_k}}(u_{i_k}(x_{k-1}),u_{i_k}(x_{k})),
$$
where, it may be recalled, $u_{i_k}(x_{k}) = v_k$. Moreover, we have the following.

\begin{lemma}\label{lem:paths}
Let $\Gamma_{1},\ldots,\Gamma_{n}$ be connected graphs, $\Gamma:=\Conv_{i=1}^{n}\Gamma_{i}$, and $\tilde{v},\tilde{w}\in V(\Gamma)$. Then all paths of minimal length from $\tilde{v}$ to $\tilde{w}$ have the same transition points.
\end{lemma}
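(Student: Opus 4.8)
The key structural fact to exploit is that in a free product $\Gamma = \Conv_{i=1}^n \Gamma_i$, there is essentially a canonical combinatorial structure: every vertex is an admissible word, and the sheets give a "tree-like" decomposition. The plan is to reduce to the case $\tilde{v} = \emptyset$ by using the fact (to be verified) that $\Aut(\Gamma)$ — or at least the relevant rigid/sheet-preserving part — acts transitively enough, or more safely, to argue directly from the word structure.

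First I would set up the combinatorial skeleton. Let me introduce the \emph{structure tree} $\calT$ implicitly: vertices of $\calT$ are sheets of $\Gamma$ together with the "branch vertices" where sheets meet, with incidence given by containment. The crucial claim is that for any two vertices $\tilde{v}, \tilde{w} \in V(\Gamma)$, there is a unique minimal sequence of sheets $S_{1}, \ldots, S_{r}$ such that $\tilde{v} \in S_1$, $\tilde{w} \in S_r$, consecutive sheets $S_k, S_{k+1}$ meet at a single vertex $p_k$ (a transition point), and no sheet is repeated. This follows from Remark~\ref{rem:def_obs}: sheets of different graphs intersect trivially or at a single vertex, belonging to the same $\Gamma_j$-sheet is an equivalence relation, and the word structure shows the "sheet graph" is a tree. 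Concretely, given $\tilde{v} = v_1 \ldots v_l$ and $\tilde{w} = w_1 \ldots w_m$, let $v_1 \ldots v_s$ be the longest common prefix; then any path from $\tilde{v}$ to $\tilde{w}$ must pass through each of $v_1 \ldots v_s, v_1\ldots v_{s+1}, \ldots, v_1 \ldots v_l = \tilde v$ is wrong direction — rather, any path from $\tilde v$ to $\tilde w$ must visit $v_1\ldots v_k$ for each $k \le l$ and $w_1\ldots w_k$ for each $k\le m$, because these are cut vertices separating the relevant pieces.

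Then the main step: I would show that the transition points of \emph{any} minimal path from $\tilde v$ to $\tilde w$ are exactly determined. The sequence of sheets $S_1, \ldots, S_r$ traversed by any path (minimal or not) from $\tilde v$ to $\tilde w$ must, after deleting immediate backtracks, contain the canonical sequence above as a subsequence; and a minimal path cannot backtrack between sheets (leaving a sheet and returning would be non-minimal, since within each sheet $\cong \Gamma_k$ one takes a geodesic and the sheets meet at cut vertices — this uses the distance formula $d(\emptyset, \tilde v) = \sum_k d_{\Gamma_{i_k}}(\cdots)$ derived just above the lemma, generalized via translation to arbitrary $\tilde v, \tilde w$). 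Hence a minimal path traverses exactly $S_1, \ldots, S_r$ in order, entering $S_{k}$ at $p_{k-1}$ and leaving at $p_k$, so its transition points are precisely $p_1, \ldots, p_{r-1}$, independent of the path. The distinct transition points in $\Gamma_{c(e_m)}$ then also coincide across paths since $\varphi_{(p_k, \cdot)}^{-1}$ is determined.

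\textbf{Main obstacle.} The technical heart is proving that within a minimal path the sheet-sequence has no repetitions, i.e. that a geodesic never leaves a sheet and comes back. This requires the distance formula for general pairs $(\tilde v, \tilde w)$, not just $(\emptyset, \tilde v)$; establishing it cleanly means showing each "intermediate prefix/suffix vertex" is a cut vertex of $\Gamma$ that every $\tilde v$–$\tilde w$ path must cross, which in turn rests on Remark~\ref{rem:def_obs}(vi) (the $u_j$ are constant on sheets) to see that sheets hang off these vertices in a tree pattern. I expect that once the cut-vertex observation is in place, additivity of distance along the sheet chain is straightforward, and the uniqueness of transition points follows immediately. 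If a shortcut is desired, one could instead phrase everything in terms of the structure tree $\calT$ and cite that $\Aut_{\calS}(\Gamma)$ acts on it (as announced for Section~\ref{sec:aut_s}), but doing it by hand via the word combinatorics is self-contained and fits better here.
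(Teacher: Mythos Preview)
Your plan is sound and would work, but it is considerably more elaborate than the paper's proof. You propose to build (at least implicitly) the structure tree of sheets, identify the unique sheet-geodesic between the sheets containing $\tilde v$ and $\tilde w$, and then prove that a minimal path in $\Gamma$ traces this sheet-geodesic without backtracking---flagging ``no sheet is revisited'' as the main technical hurdle and invoking the distance formula to handle it.

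The paper bypasses all of this with a two-line argument. Fix one minimal path $\gamma$ and a transition point $\tilde x_m$ on it. Since $\gamma$ changes sheets at $\tilde x_m$, that vertex is a cut vertex of $\Gamma$; and because $\gamma$ is \emph{minimal}, the endpoints $\tilde v$ and $\tilde w$ must lie in different components of $\Gamma\setminus\{\tilde x_m\}$ (otherwise $\gamma$ would have to return through $\tilde x_m$ and could be shortened). Hence every $\tilde v$--$\tilde w$ path passes through $\tilde x_m$. That is the entire proof: there is no need to first locate a canonical sheet sequence or to rule out sheet-revisits separately, because minimality is invoked directly at each transition point rather than globally via a distance formula.

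Your route does buy something: it gives an explicit description of the transition points in terms of the word combinatorics (though your prefix bookkeeping is slightly off---the forced vertices are the prefixes of $\tilde v$ and $\tilde w$ down to their longest common prefix, not all prefixes), and it anticipates the structure tree introduced later in Section~\ref{sec:aut_s}. The paper's route buys brevity: the cut-vertex observation you mention only in passing as a lemma toward the distance formula \emph{is} the whole argument, and your ``main obstacle'' dissolves once you apply it pointwise.
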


\begin{proof}
If $\gamma$ transitions at~$m$ then $\tilde{x}_{m}$ is a cut vertex and deleting $\tilde{x}_m$ disconnects $\Gamma$ so that $\tilde{v}$ lies in one component and $\tilde{w}$ in the other. Indeed, if $\tilde{w}$ lay in the same component as~$\tilde{v}$, then $\gamma$ would return to that component through $\tilde{x}_m$ and could be shortened. Every path from $\tilde{v}$ to~$\tilde{w}$ must therefore pass through~$\tilde{x}_m$.
\end{proof}

The following proposition states in particular that every mashup is a quotient of its associated free product.

\begin{proposition}
\label{prop:mashup_quotientmap}
Let $M$ be a mashup of $\Gamma_1$, \dots, $\Gamma_n$. Further, let $x_0\in V(M)$ and $\smash{u_i:=\varphi^{-1}_{(x_0,i)}(x_0)}$ ($i\in\{1,\dots, n\}$). Initialise $\Gamma:=\smash{\Conv_{i=1}^n \Gamma_i}$ by $\mathbf{u}(\emptyset) = (u_1,\dots, u_n)$. Then there is a surjective graph homomorphism $\smash{\psi:\Gamma\to M}$ with $\psi(\emptyset) = x_0$ and $\mathbf{u}(\psi(\tilde{v})) = \mathbf{u}(\tilde{v})$ for all $\tilde{v}\in V(\Gamma)$.
\end{proposition}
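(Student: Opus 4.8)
The map $\psi$ should be defined by induction on word length, mimicking the inductive construction of $\Gamma = \Conv_{i=1}^n \Gamma_i$. We set $\psi(\emptyset) := x_0$, and given an admissible word $\tilde v = v_1\dots v_l \in V^{(l)}$ with $v_l \in V_i$, together with the inductive hypothesis that $\mathbf{u}(\psi(\tilde v)) = \mathbf{u}(\tilde v)$, we define $\psi$ on the admissible extensions $\tilde v v_{l+1}$ (with $v_{l+1} \in V_j$, $j \ne i$, $v_{l+1} \ne u_j(\tilde v)$) by sending $\tilde v v_{l+1}$ to the vertex of the $\Gamma_j$-sheet of $M$ at $\psi(\tilde v)$ corresponding to $v_{l+1}$, i.e. $\psi(\tilde v v_{l+1}) := \varphi_{(\psi(\tilde v),j)}(v_{l+1})$. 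The admissible words are exactly generated this way, so this defines $\psi$ on all of $V(\Gamma)$ provided each step is consistent.

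**Key steps.** First I would check that $\psi$ is well-defined, i.e. that each admissible word has a well-defined image — this is automatic from the inductive scheme since every $\tilde v v_{l+1} \in V^{(l+1)}$ arises from a unique $\tilde v \in V^{(l)}$ and a unique last letter. Second, I would verify the update-function compatibility $\mathbf{u}(\psi(\tilde v)) = \mathbf{u}(\tilde v)$ by induction: given it holds at $\tilde v$, the $h$-th coordinate of $\mathbf{u}(\psi(\tilde v v_{l+1}))$ for $h \ne j$ equals $u_h(\psi(\tilde v))$ by Remark~\ref{rem:mashup}\eqref{rem:mashup_3} (all coordinates except the $j$-th are constant on the $\Gamma_j$-sheet at $\psi(\tilde v)$, and $\psi(\tilde v v_{l+1})$ lies in that sheet), which equals $u_h(\tilde v) = u_h(\tilde v v_{l+1})$ by the induction hypothesis and \eqref{def:update}; the $j$-th coordinate of $\mathbf{u}(\psi(\tilde v v_{l+1}))$ is $\varphi^{-1}_{(\psi(\tilde v v_{l+1}),j)}(\psi(\tilde v v_{l+1})) = v_{l+1} = u_j(\tilde v v_{l+1})$ by construction of $\psi$ and \eqref{def:update}. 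Third, I would check $\psi$ is a graph homomorphism: by Remark~\ref{rem:def_obs}, the edges of $\Gamma$ at $\tilde v$ are exactly the edges within the sheets $S_{(\tilde v, i)}$ for $i \in \{1,\dots,n\}$, and an edge $\{\tilde v, \tilde v'\}$ in the $\Gamma_i$-sheet corresponds, via the bijection of that sheet with $V_i$, to an edge of $\Gamma_i$; since $\psi$ restricted to any $\Gamma_i$-sheet of $\Gamma$ is (by construction and the update compatibility) the composition of the sheet's bijection with $V_i$ and the embedding $\varphi_{(\cdot,i)}$ into $M$, it carries sheet-edges to sheet-edges, hence edges to edges. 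Fourth, surjectivity: since $M$ is connected and, by the third step plus Remark~\ref{rem:mashup}, $\psi(N_\Gamma(\tilde v)) = N_M(\psi(\tilde v))$ for every $\tilde v$ — each vertex $\tilde v$ of $\Gamma$ has one neighbour per neighbour of $u_i(\tilde v)$ in each $\Gamma_i$, matching the valency description of $M$ in Remark~\ref{rem:mashup} — Lemma~\ref{lem:quotientmap} gives surjectivity.

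**Main obstacle.** The only genuinely delicate point is the matching of neighbourhoods $\psi(N_\Gamma(\tilde v)) = N_M(\psi(\tilde v))$, needed for Lemma~\ref{lem:quotientmap}: one must confirm that $\psi$ maps the $\Gamma_i$-sheet at $\tilde v$ bijectively onto the $\Gamma_i$-sheet at $\psi(\tilde v)$ — injectivity on a single sheet follows because both sheets are copies of $\Gamma_i$ identified via $u_i$-coordinates, but one should be careful that the "anchor" vertex of the sheet (the word $v_1\dots v_{l-1}$ in the case of the $\Gamma_i$-sheet when $v_l \in V_i$) maps correctly, which again reduces to the update-function bookkeeping. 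Everything else is routine induction on word length; the structural content is entirely carried by Remark~\ref{rem:mashup} and Remark~\ref{rem:def_obs}.
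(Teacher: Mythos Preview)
Your proposal is correct and follows essentially the same approach as the paper: both define $\psi$ inductively by $\psi(\emptyset)=x_0$ and $\psi(\tilde v v_{l+1}):=\varphi_{(\psi(\tilde v),j)}(v_{l+1})$, verify the update-function compatibility $\mathbf{u}(\psi(\tilde v))=\mathbf{u}(\tilde v)$ by induction using Definition~\ref{def:mashup}\ref{def:mashup_d} (equivalently Remark~\ref{rem:mashup}\eqref{rem:mashup_3}), check that edges are preserved sheet-by-sheet, and deduce surjectivity from Lemma~\ref{lem:quotientmap}. The paper's version separates the base case $V^{(0)}\cup V^{(1)}$ a bit more explicitly and phrases the inductive hypothesis as an equation for $\varphi^{-1}_{(y,i)}(y)$, but the content is the same.
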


\begin{proof}
We argue by induction on the natural number $l$ in $V(\Gamma)=\bigcup_{l\in\bbN_{0}}V^{(l)}$. Define $\psi(\emptyset) = x_0$ and extend $\psi$ to $V^{(1)}$ by setting 
\begin{equation}\label{eq:quotientmap}
\psi(v_1) := \varphi_{(x_0,i)}(v_1) \mbox{ if }v_1\in \Gamma_i\setminus\{u_i = \varphi^{-1}_{(x_0,i)}(x_0)\}.
\end{equation}
Then $\psi$ is a morphism from $\mathrm{span}_{\Gamma}(V^{(0)}\cup V^{(1)})$ to $M$ since $\psi(\emptyset)=x_0=\varphi_{(x_0,i)}(u_i)$, by Definition~\ref{sec:def_new}~\eqref{eq:emptyneighbours} of the neighbours of $\emptyset\in V(\Gamma)$, and since $\varphi_{(x_0,i)}$ is a morphism.

By Definition~\ref{def:mashup}\ref{def:mashup_d}, the map $\smash{y\mapsto \varphi^{-1}_{(y,j)}(y)}$ is constant on $\psi(S_{(\emptyset,i)}) = \varphi_{(x_0,i)}(\Gamma_i)$ for any $j\in\{1,\ldots,n\}\backslash\{i\}$. The definition of the initial value of $\mathbf{u}$ implies that this constant is equal to $\smash{\varphi^{-1}_{(x_0,j)}(x_0) = u_j = u_j(\emptyset)}$.

Now, suppose $\psi$ is defined on $\mathrm{span}_{\Gamma}(\bigcup_{k=0}^{l}V^{(k)})$ and for $y = \psi(v_1\dots v_k)$ satisfies
\begin{equation}\label{eq:recursion}
\varphi^{-1}_{(y,i)}(y) = \begin{cases}
v_k & \mbox{ if }v_k\in V(\Gamma_{i})\\
u_i(v_1\dots v_k) & \mbox{ if }v_k\in V(\Gamma_{j}),\ j\ne i
\end{cases},\qquad (k\in\{1,\dots, l\}). 
\end{equation} 
This holds for $l=1$ by the above. Now, consider $\tilde{v}v_{l+1}=v_1\dots v_lv_{l+1}\in V^{(l+1)}$, where $v_l\in V_i$, $v_{l+1}\in V_j$ and $i\ne j$. In particular, $v_{l+1}\ne u_j(\tilde{v})$. Set $y:=\psi(\tilde{v})$ and
$$
\psi(\tilde{v}v_{l+1}):=\varphi_{(y,j)}(v_{l+1}).
$$
Since~\eqref{eq:recursion} implies that $\varphi_{(y,j)}(u_j(\tilde{v})) = y$, it follows that $\psi$ is a morphism from $S_{(\tilde{v},j)}$ to the $\Gamma_j$-sheet of $M$ containing $y$. By~\eqref{eq:nonemptyneighbours2} of $E(\Gamma)$, this extension of $\psi$ is a morphism $\mathrm{span}_{\Gamma}(\smash{\bigcup_{k=0}^{l+1} V^{(k)})\to M}$. To see that~\eqref{eq:recursion} extends to the case $k = l+1$, observe that $\smash{\varphi^{-1}_{(z,j)}(z) = v_{l+1}}$ for $z = \psi(\tilde{v}v_{l+1})$ by definition and, for $h\ne j$, 
$$
\varphi^{-1}_{(z,h)}(z) = \varphi^{-1}_{(y,h)}(y) = \begin{cases}
v_l = u_i(\tilde{v}) & \mbox{ if }h=i\\
u_h(\tilde{v}) & \mbox{ if }h\ne i
\end{cases},
$$
where the first equality holds by Definition~\ref{def:mashup}\ref{def:mashup_d} and the second row of the second equality holds by~\eqref{eq:recursion} with $k=l$. Since $u_h(\tilde{v}v_{l+1}) = u_h(\tilde{v})$ for $h\ne j$,~\eqref{eq:recursion} extends.

Continuing recursively produces the claimed morphism $\psi$, which is surjective by Lemma~\ref{lem:quotientmap}, the connectedness of $M$, and conditions~\ref{def:mashup_a} and~\ref{def:mashup_b} in Definition~\ref{def:mashup}.
\end{proof}

Using Example~\ref{ex:mashup}\ref{item:mashup_cartesian_product}, the following is a consequence of Proposition \ref{prop:mashup_quotientmap}.

\begin{corollary}\label{cor:quotientmap}
Let $\Gamma_{1},\ldots,\Gamma_{n}$ be connected graphs. Then $\mathbf{u}:\!\Conv_{i=1}^n \Gamma_i \!\to\! \Gamma_1\times \dots \times \Gamma_n$ is a graph homomorphism. \qed
\end{corollary}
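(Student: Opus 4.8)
The plan is to derive this corollary directly from Proposition~\ref{prop:mashup_quotientmap} by exhibiting the cartesian product $\Gamma_{1}\times\cdots\times\Gamma_{n}$ as a mashup of $\Gamma_{1},\ldots,\Gamma_{n}$ and then identifying the surjection $\psi$ furnished by that proposition with the update map $\mathbf{u}$. Concretely, I would first invoke Example~\ref{ex:mashup}\ref{item:mashup_cartesian_product}, which records that $M:=\prod_{i=1}^{n}\Gamma_{i}$ is a mashup with embeddings $\varphi_{(x,i)}\colon\Gamma_{i}\to M$ sending $y$ to $(x_{1},\ldots,x_{i-1},y,x_{i+1},\ldots,x_{n})$, and for which the associated update function (as in Remark~\ref{rem:mashup}\eqref{rem:mashup_3}) is the identity: $\varphi_{(x,i)}^{-1}(x)=x_{i}$, so $\mathbf{u}_{M}(x)=(x_{1},\ldots,x_{n})=x$.

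Next I would apply Proposition~\ref{prop:mashup_quotientmap} to this mashup $M$. Choosing the base vertex $x_{0}=(u_{1},\ldots,u_{n})\in V(M)$, where $(u_{1},\ldots,u_{n})$ is the tuple used to initialise $\Gamma:=\Conv_{i=1}^{n}\Gamma_{i}$ via $\mathbf{u}(\emptyset)=(u_{1},\ldots,u_{n})$, we have $\varphi_{(x_{0},i)}^{-1}(x_{0})=u_{i}$, so the hypothesis of the proposition matches the initialisation of $\Gamma$. The proposition then yields a surjective graph homomorphism $\psi\colon\Gamma\to M$ satisfying $\mathbf{u}_{M}(\psi(\tilde{v}))=\mathbf{u}(\tilde{v})$ for all $\tilde{v}\in V(\Gamma)$. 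Since $\mathbf{u}_{M}$ is the identity on $V(M)=\prod_{i}V(\Gamma_{i})$, this last equation reads $\psi(\tilde{v})=\mathbf{u}(\tilde{v})$; that is, the homomorphism $\psi$ coincides with $\mathbf{u}$ under the identification $V(M)=V(\Gamma_{1})\times\cdots\times V(\Gamma_{n})$. As $\psi$ is a graph homomorphism, so is $\mathbf{u}\colon\Conv_{i=1}^{n}\Gamma_{i}\to\Gamma_{1}\times\cdots\times\Gamma_{n}$, which is the claim.

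The only real point requiring care is the bookkeeping around update functions: one must check that the ``$\mathbf{u}$'' defined abstractly for the mashup $M=\prod_{i}\Gamma_{i}$ in Remark~\ref{rem:mashup}\eqref{rem:mashup_3} really is the identity map, and that this is the same $\mathbf{u}_{M}$ appearing in the conclusion $\mathbf{u}_{M}(\psi(\tilde{v}))=\mathbf{u}(\tilde{v})$ of Proposition~\ref{prop:mashup_quotientmap}. Both are immediate from the explicit formula for $\varphi_{(x,i)}$ in Example~\ref{ex:mashup}\ref{item:mashup_cartesian_product}, since $\varphi_{(x,i)}$ leaves every coordinate except the $i$-th unchanged and places $y$ in the $i$-th slot, whence $\varphi_{(x,i)}^{-1}(x)=x_{i}$. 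No genuine obstacle arises beyond this; the substance has been done in Proposition~\ref{prop:mashup_quotientmap}, and the corollary is a specialisation. I would therefore keep the write-up to a couple of sentences, as the statement already indicates (``the following is a consequence of Proposition~\ref{prop:mashup_quotientmap}''), pointing to Example~\ref{ex:mashup}\ref{item:mashup_cartesian_product} for the mashup structure and noting the identification $\psi=\mathbf{u}$.
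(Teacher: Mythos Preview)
Your proposal is correct and follows exactly the route indicated by the paper: invoke Example~\ref{ex:mashup}\ref{item:mashup_cartesian_product} to realise $\prod_{i=1}^{n}\Gamma_{i}$ as a mashup with update function the identity, apply Proposition~\ref{prop:mashup_quotientmap} with $x_{0}=(u_{1},\ldots,u_{n})$, and read off $\psi=\mathbf{u}$ from $\mathbf{u}_{M}\circ\psi=\mathbf{u}$. The paper's own proof is just the one-line pointer to these two ingredients, so your write-up simply unpacks what the paper leaves implicit.
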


We now show that the isomorphism class of a free product is independent of the initialisation of its update function.

\begin{corollary}\label{cor:uniqueness}
Let $\Gamma_{1},\ldots,\Gamma_{n}$ be connected graphs and let $\Gamma$ and $\Gamma'$ be the free products of $\Gamma_{1},\ldots,\Gamma_{n}$ with respect to $(u_1,\dots, u_n)$ and $(u_1',\dots, u_n')$ in $\prod_{i=1}^{n}V(\Gamma_{i})$. Then $\Gamma$ and $\Gamma'$ are isomorphic. 
\end{corollary}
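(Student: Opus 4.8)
The plan is to build an explicit isomorphism $\Phi\colon\Gamma\to\Gamma'$ by induction on word length, using the combinatorial description of vertices and edges from Section~\ref{sec:def_new}. The key observation is that changing the initialisation $(u_1,\dots,u_n)\rightsquigarrow(u_1',\dots,u_n')$ only relabels, inside each sheet, which vertex of the factor $\Gamma_i$ is "forbidden"; since each factor $\Gamma_i$ is vertex-transitive... wait—no, the factors are \emph{not} assumed vertex-transitive here, so I cannot use an automorphism of $\Gamma_i$ to move $u_i$ to $u_i'$. Instead I should use the update functions directly: a vertex $\tilde v=v_1\dots v_l$ of $\Gamma$ is an admissible word in which the constraint "$v_{k+1}\ne u_j(v_1\dots v_k)$" is dictated by the running values of $\mathbf u$. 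The right statement is that the \emph{set} of admissible words and its sheet/edge structure does not actually depend on the initial tuple in an essential way once we track $\mathbf u$. So the cleaner route is:

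First, I would reduce to changing one coordinate of the initial tuple at a time, say $u_i\rightsquigarrow u_i'$ with all other coordinates fixed; composing the resulting isomorphisms then handles the general case. For this single-coordinate change I would define a bijection $\Phi$ on vertices that only alters letters lying in $V_i$: map $\tilde v=v_1\dots v_l\mapsto v_1'\dots v_l'$ where $v_k':=v_k$ if $v_k\notin V_i$, and if $v_k\in V_i$ then $v_k':=\tau_k(v_k)$ for a suitable bijection $\tau_k\colon V_i\setminus\{u_i(v_1\dots v_{k-1})\}\to V_i\setminus\{u_i'(v_1'\dots v_{k-1}')\}$. The only place the forbidden vertex differs from $u_i$ (resp.\ $u_i'$) is at the very first $V_i$-letter reachable from $\emptyset$; at every later occurrence the forbidden value equals the previous $V_i$-letter, which $\Phi$ has already been defined on, so one can take $\tau_k$ to be the identity except on the $\Gamma_i$-sheet at $\emptyset$, where $\tau$ must be \emph{any} bijection of $V(\Gamma_i)$ carrying $u_i$ to $u_i'$ and then be "propagated" outward. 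Concretely, fix once and for all a bijection $\sigma\colon V(\Gamma_i)\to V(\Gamma_i)$ with $\sigma(u_i)=u_i'$; I claim $\Phi$ defined by applying $\sigma$ to every $V_i$-letter of $\tilde v$ that sits in the $\Gamma_i$-sheet at $\emptyset$, and the identity to all other letters, is a graph isomorphism. The induction maintains the invariant $\mathbf u(\Phi(\tilde v))$ is obtained from $\mathbf u(\tilde v)$ by applying $\sigma$ in the $i$-th coordinate (and identity elsewhere), which makes admissibility, the sheet decomposition, and the edge rules \eqref{eq:emptyneighbours}--\eqref{eq:nonemptyneighbours2} transport verbatim.

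Actually the slickest argument avoids constructing $\Phi$ by hand. By Remark~\ref{rem:mashup}\ref{rem:mashup_4} and Example~\ref{ex:mashup}\ref{item:mashup_free_product}, both $\Gamma$ and $\Gamma'$ are mashups of $\Gamma_1,\dots,\Gamma_n$. By Proposition~\ref{prop:mashup_quotientmap} applied to $M=\Gamma'$ with base vertex $x_0:=\emptyset'$, whose update tuple is $(u_1',\dots,u_n')$—but to invoke Proposition~\ref{prop:mashup_quotientmap} I must initialise the source free product with that \emph{same} tuple, so this only directly re-proves $\Gamma'\to\Gamma'$. To get an actual comparison I instead need: \emph{any} mashup of $\Gamma_1,\dots,\Gamma_n$ in which every sheet-intersection is as constrained as in Definition~\ref{def:mashup}\ref{def:mashup_c} and which is "as free as possible" is the free product. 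So I would prove that $\Phi:=\psi'\circ\psi^{-1}$ makes sense: apply Proposition~\ref{prop:mashup_quotientmap} with $M=\Gamma'$ but after first \emph{re-initialising} $\Gamma'$ itself via the tuple $(u_1,\dots,u_n)$—legitimate because a free product's defining data only fixes a tuple in $\prod V(\Gamma_i)$ and $\Gamma'$ was defined from $(u_1',\dots,u_n')$, yet as a mashup it admits, for the chosen base vertex, the update values $\mathbf u(\emptyset')=(u_1',\dots,u_n')$. Hence Proposition~\ref{prop:mashup_quotientmap} gives a surjection $\psi\colon\Gamma'_{(u_1',\dots,u_n')}\to\Gamma'$, which is the identity, and more usefully, taking base vertex in $\Gamma$ to be $\emptyset$ gives $\psi\colon\Gamma\to\Gamma'$... this still requires the initial tuples to match.

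The main obstacle, then, is precisely that Proposition~\ref{prop:mashup_quotientmap} always produces a quotient \emph{with the matching initialisation}, so it does not by itself yield the isomorphism across different initialisations; that gap must be bridged by the explicit sheet-relabelling map $\Phi$ sketched above. I would therefore (i) set up the single-coordinate reduction; (ii) fix a bijection $\sigma$ of $V(\Gamma_i)$ sending $u_i$ to $u_i'$; (iii) define $\Phi$ on admissible words by applying $\sigma$ to exactly those $V_i$-letters lying in the $\Gamma_i$-sheet at $\emptyset$ and the identity to all other letters; (iv) prove by induction on word length that $\Phi$ maps $V(\Gamma)$ bijectively onto $V(\Gamma')$, carrying $\mathbf u$ to $\sigma_i\circ\mathbf u$ (where $\sigma_i$ applies $\sigma$ in the $i$-th slot), whence admissibility \eqref{eq:admissible}, the update recursion \eqref{def:update}, and the edge rules \eqref{eq:emptyneighbours}--\eqref{eq:nonemptyneighbours2} are preserved; (v) conclude $\Phi$ is a graph isomorphism, and finally (vi) compose over the coordinates $i=1,\dots,n$ to pass from $(u_1,\dots,u_n)$ to $(u_1',\dots,u_n')$. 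The verification in step (iv) is routine bookkeeping once the invariant is correctly stated; the genuinely delicate point is checking that $\sigma$ only ever needs to be applied inside the single $\Gamma_i$-sheet at $\emptyset$ and that at every other $V_i$-letter the forbidden vertex is the previously written $V_i$-letter, so that leaving those letters untouched is consistent.
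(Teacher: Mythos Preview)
Your explicit construction has a genuine gap. In step~(ii) you fix a \emph{bijection} $\sigma\colon V(\Gamma_i)\to V(\Gamma_i)$ with $\sigma(u_i)=u_i'$, and in step~(iii) you apply $\sigma$ to every $V_i$-letter in the $\Gamma_i$-sheet at $\emptyset$. But edges inside that sheet are of the form $\{v_1,v_1'\}$ with $\{v_1,v_1'\}\in E_i$ (and $\{\emptyset,v_1\}$ with $\{u_i,v_1\}\in E_i$); under $\Phi$ these become $\{\sigma(v_1),\sigma(v_1')\}$ (resp.\ $\{\emptyset',\sigma(v_1)\}$, to be an edge iff $\{u_i',\sigma(v_1)\}\in E_i$). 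For $\Phi$ to preserve these edges you need $\sigma$ to be an \emph{automorphism} of $\Gamma_i$ sending $u_i$ to $u_i'$, and since the $\Gamma_i$ are not assumed vertex-transitive no such automorphism need exist. This is exactly the obstruction you correctly identified at the outset and then forgot about. No amount of bookkeeping in step~(iv) can repair this: the map simply is not a graph morphism on the first sheet.

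Ironically, the ``slick'' route you abandon is the paper's proof, and you abandon it for the wrong reason. You write that Proposition~\ref{prop:mashup_quotientmap} forces the source free product to be initialised with the update tuple of the chosen base vertex $x_0$, and then take $x_0=\emptyset'$, concluding you only get $\Gamma'\to\Gamma'$. But $x_0$ is a \emph{free} choice of vertex in the mashup $M=\Gamma'$. The paper's key move is to choose $\tilde v_0\in V(\Gamma')$ with $\mathbf u'(\tilde v_0)=(u_1,\dots,u_n)$; such a vertex exists because, starting from $\emptyset'$, one can successively append letters from each $\Gamma_i$ to adjust the $i$-th coordinate of $\mathbf u'$ to $u_i$. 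Proposition~\ref{prop:mashup_quotientmap} then yields a surjection $\psi\colon\Gamma\to\Gamma'$ with $\psi(\emptyset)=\tilde v_0$ intertwining the update functions. Injectivity is obtained by running the same argument with the roles of $\Gamma$ and $\Gamma'$ swapped to produce $\psi'\colon\Gamma'\to\Gamma$, choosing the base vertex so that $\psi'\circ\psi(\emptyset)=\emptyset$, and then checking by induction on distance from $\emptyset$ (using that $\psi'\circ\psi$ intertwines $\mathbf u$) that $\psi'\circ\psi=\id$.
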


\begin{proof}
Let $\mathbf{u}$ and $\mathbf{u}'$ denote the update functions on $\Gamma$ and $\Gamma'$ respectively. Choose $\tilde{v}_{0}\in V(\Gamma')$ such that $\mathbf{u}'(\tilde{v}_{0}) = (u_1,\dots, u_n)$. Since $\Gamma'$ is a mashup of $\Gamma_1$, \dots, $\Gamma_n$, there is, by Proposition~\ref{prop:mashup_quotientmap}, a surjective homomorphism $\psi:\Gamma\to \Gamma'$ with $\psi(\emptyset) = \tilde{v}_{0}$. 

To see that $\psi$ is injective, we construct a left inverse: Interchanging the roles of $\Gamma$ and $\Gamma'$, Proposition~\ref{prop:mashup_quotientmap} also yields a surjective homomorphism $\psi':\Gamma'\to\Gamma$. Since $\psi'$ is surjective, there is $\tilde{v}'_0 \in V(\Gamma')$ such that $\psi'(\tilde{v}'_0) = \emptyset\in V(\Gamma)$. Proposition~\ref{prop:mashup_quotientmap} ensures that $\mathbf{u}'(\tilde{v}'_0) = \mathbf{u}(\psi'(\tilde{v}_{0}')) = \mathbf{u}(\emptyset) = (u_1,\dots, u_n)$. Hence $\tilde{v}_0$ from the previous paragraph may be chosen to be $\tilde{v}'_0$. Then the composition $\psi'\circ \psi : \Gamma\to \Gamma$ is surjective and satisfies $\psi'\circ\psi(\emptyset) = \emptyset$ as well as $\mathbf{u}(\psi'\circ\psi(\tilde{v})) = \mathbf{u}(\tilde{v})$ for every $\tilde{v}\in V(\Gamma)$. 

It remains to show that $\psi'\circ\psi$ is the identity homomorphism. We argue by induction on the distance of a vertex from the empty word, given that $\psi'\circ\psi(\emptyset) = \emptyset$. Suppose $\psi'\circ\psi(\tilde{v}) =\tilde{v}$ for every $\tilde{v}$ with $d(\emptyset,\tilde{v})\leq d$. Consider $\tilde{w}$ with $d(\emptyset,\tilde{w})= d+1$. Then there is a neighbour, $\tilde{v}$ of $\tilde{w}$ with $d(\emptyset,\tilde{v}) = d$ and a unique $i\in\{1,\dots,n\}$ such that $\{\tilde{v},\tilde{w}\}$ belongs to $S_{(\tilde{v},i)}$. Since $\psi'\circ\psi(\tilde{v}) = \tilde{v}$ and $u_i(\psi'\circ\psi(\tilde{w})) = u_i(\tilde{w})$, it follows that $\psi'\circ\psi(\tilde{w})$ equals $\tilde{w}$.
\end{proof}

The proof of Corollary~\ref{cor:uniqueness} entails that if $\tilde{v}\in V(\Conv_{i=1}^n \Gamma_i)$ satisfies $\mathbf{u}(\tilde{v}) = \mathbf{u}(\emptyset)$, then there is an automorphism $\alpha\in\Aut(\Conv_{i=1}^n \Gamma_i)$ such that $\alpha(\emptyset) = \tilde{v}$ and 
\begin{equation}\label{eq:intertwine}
\mathbf{u}(\alpha(\tilde{w})) = \mathbf{u}(\tilde{w})\mbox{ for every }\tilde{w}\in V(\Conv_{i=1}^n \Gamma_i).
\end{equation}

\begin{corollary}\label{cor:associative}
Let $\Gamma_{1},\Gamma_{2},\Gamma_{3}$ be connected graphs and $\Gamma:=\Conv_{i=1}^{3}\Gamma_{i}$. Then
\begin{displaymath}
 (\Gamma_{1}\ast\Gamma_{2})\ast\Gamma_{3}\cong\Gamma\cong\Gamma_{1}\ast(\Gamma_{2}\ast\Gamma_{3}).
\end{displaymath}
\end{corollary}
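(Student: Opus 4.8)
The plan is to deduce associativity from the uniqueness result (Corollary~\ref{cor:uniqueness}) together with the quotient-map machinery of Proposition~\ref{prop:mashup_quotientmap}. The key observation is that each of $(\Gamma_1\ast\Gamma_2)\ast\Gamma_3$ and $\Gamma_1\ast(\Gamma_2\ast\Gamma_3)$ is, after forgetting its sheet structure, a mashup of the three graphs $\Gamma_1,\Gamma_2,\Gamma_3$ in the sense of Definition~\ref{def:mashup}; once that is established, Proposition~\ref{prop:mashup_quotientmap} provides a surjective homomorphism $\Gamma\to(\Gamma_1\ast\Gamma_2)\ast\Gamma_3$ respecting update functions, and the argument of Corollary~\ref{cor:uniqueness} (constructing a left inverse, then checking it is the identity by induction on distance from the empty word) upgrades it to an isomorphism. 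By symmetry the same works for $\Gamma_1\ast(\Gamma_2\ast\Gamma_3)$, and the two isomorphisms compose to give the full statement.

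Concretely, I would first treat $M:=(\Gamma_1\ast\Gamma_2)\ast\Gamma_3$. Its sheets are of two types: $\Gamma_3$-sheets (copies of $\Gamma_3$) and $(\Gamma_1\ast\Gamma_2)$-sheets; but each $(\Gamma_1\ast\Gamma_2)$-sheet is itself a free product and hence carries its own $\Gamma_1$- and $\Gamma_2$-sheets by Remark~\ref{rem:def_obs}. Taking these together, $M$ is partitioned into $\Gamma_1$-, $\Gamma_2$- and $\Gamma_3$-sheets. I would then verify Definition~\ref{def:mashup}(a)--(d): (a) and (b) are immediate from the sheet structure of the two nested free products; (c) is the statement that sheets of the same type are disjoint or equal and sheets of different types meet in at most one vertex, which follows from Remark~\ref{rem:def_obs}(v) applied at both levels of the bracketing; and (d) — the constancy of the relevant re-coordinatisation maps — is precisely Remark~\ref{rem:def_obs}(vii) (that $u_j$ is constant on $\Gamma_i$-sheets for $j\neq i$), again invoked once for the outer product and once inside each $(\Gamma_1\ast\Gamma_2)$-sheet. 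By Remark~\ref{rem:mashup}\eqref{rem:mashup_4} it in fact suffices to check (a) and (d).

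With $M$ recognised as a mashup of $\Gamma_1,\Gamma_2,\Gamma_3$, Proposition~\ref{prop:mashup_quotientmap} (with $x_0$ the empty word of $M$ and $\mathbf{u}(\emptyset)$ read off from the sheets through $x_0$) gives a surjective homomorphism $\psi:\Gamma\to M$ with $\mathbf{u}(\psi(\tilde v))=\mathbf{u}(\tilde v)$ for all $\tilde v$. Running the same proposition in the other direction — $\Gamma$ is a mashup of $\Gamma_1,\Gamma_2,\Gamma_3$ by Example~\ref{ex:mashup}\ref{item:mashup_free_product} — yields a surjective update-function-preserving homomorphism $\psi':M\to\Gamma$. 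Now the argument of Corollary~\ref{cor:uniqueness} applies verbatim: choosing basepoints so that $\psi'\circ\psi$ fixes $\emptyset$ and preserves $\mathbf{u}$, one shows by induction on $d(\emptyset,\tilde w)$ that $\psi'\circ\psi=\id$, using that a vertex at distance $d+1$ has a neighbour $\tilde v$ at distance $d$ with $\{\tilde v,\tilde w\}$ in a unique $\Gamma_i$-sheet and that $u_i(\psi'\circ\psi(\tilde w))=u_i(\tilde w)$ pins down $\psi'\circ\psi(\tilde w)$; similarly $\psi\circ\psi'=\id$. Hence $\Gamma\cong M$, and the mirror-image argument gives $\Gamma\cong\Gamma_1\ast(\Gamma_2\ast\Gamma_3)$.

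The main obstacle is the bookkeeping in the mashup verification: one must be careful that the embeddings $\varphi_{(x,i)}:\Gamma_i\to M$ are well-defined for all three values of $i$ uniformly, even though in $M=(\Gamma_1\ast\Gamma_2)\ast\Gamma_3$ the graphs $\Gamma_1$ and $\Gamma_2$ only appear "two levels down" while $\Gamma_3$ appears "one level down", and in particular that condition~(d) holds across this asymmetry — e.g.\ that moving along a $\Gamma_3$-sheet does not change the $\Gamma_1$- or $\Gamma_2$-coordinate, which is true because the whole $(\Gamma_1\ast\Gamma_2)$-sheet collapses to a single vertex of the ambient $\Gamma_3$-sheet. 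Everything else is a direct appeal to results already proved. One could alternatively phrase the whole proof via a single ``generalised associativity/flattening'' lemma for iterated mashups, but for three factors the above is the most economical route.
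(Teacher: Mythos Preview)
Your overall strategy coincides with the paper's: recognise $M:=(\Gamma_1\ast\Gamma_2)\ast\Gamma_3$ as a mashup of $\Gamma_1,\Gamma_2,\Gamma_3$ via the nested sheet structures, apply Proposition~\ref{prop:mashup_quotientmap} to obtain a surjective update-preserving $\psi:\Gamma\to M$, and then use induction on distance from $\emptyset$ (driven by the intertwining $\mathbf{u}\circ\psi=\mathbf{u}$) to conclude that $\psi$ is an isomorphism.

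There is, however, a genuine gap in your construction of the reverse map $\psi'$. Proposition~\ref{prop:mashup_quotientmap} produces a homomorphism \emph{from the free product} $\Conv_{i=1}^n\Gamma_i$ \emph{to} a given mashup; the domain is not an arbitrary mashup. So the observation ``$\Gamma$ is a mashup of $\Gamma_1,\Gamma_2,\Gamma_3$'' yields only a map $\Gamma\to\Gamma$, not a map $M\to\Gamma$. To get $\psi':M\to\Gamma$ by that route you would instead need to exhibit $\Gamma$ as a mashup of the \emph{two} graphs $\Gamma_1\ast\Gamma_2$ and $\Gamma_3$ (so that $M$, being their binary free product, plays the role of the domain in Proposition~\ref{prop:mashup_quotientmap}); this is plausible but is an additional verification you did not carry out, and it is not literally the argument of Corollary~\ref{cor:uniqueness}, where both graphs are free products of the \emph{same} family.

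The paper sidesteps this entirely: rather than building a left inverse, it argues directly that the update-intertwining property forces $\psi$ to be determined---hence injective---on balls of all radii about $\emptyset$, by exactly the induction you describe for $\psi'\circ\psi$. Your proof becomes correct (and matches the paper) once you drop the $\psi'$ step and run that induction on $\psi$ itself.
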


\begin{proof}
The graph $\Gamma':=(\Gamma_{1}\ast\Gamma_{2})\ast\Gamma_{3}$ is naturally a mashup of the graphs $\Gamma_{1},\Gamma_{2},\Gamma_{3}$, where the embeddings $\varphi_{(x,i)}:\Gamma_{i}\to\Gamma'$ ($x\in V(\Gamma')$, $i\in\{1,2,3\}$) come from the structure of $\Gamma'$ as a free product of $\Gamma_{1}\ast\Gamma_{2}$ and $\Gamma_{3}$, and the structure of $\Gamma_{1}\ast\Gamma_{2}$ as a free product of $\Gamma_{1}$ and $\Gamma_{2}$. Hence, by Proposition~\ref{prop:mashup_quotientmap}, there is a surjective graph homomorphism $\psi:\Gamma\to\Gamma'$ intertwining the update functions. As a consequence, $\psi$ is completely determined, and in particular injective, on balls of arbitrary radius around $\emptyset\in V(\Gamma)$ by induction. Hence it is injective overall. Proceed analogously in the case of $\Gamma_{1}\ast(\Gamma_{2}\ast\Gamma_{3})$.
\end{proof}

\section{Comparison of definitions of the free product}\label{sec:equivalence}
Here, we show that the definitions presented in Sections~\ref{sec:def_new} to \ref{sec:def_q}, which we will denote, respectively, by $\ast_{1}$ up to $\ast_{4}$ in this section, are equivalent when the graphs are vertex-transitive. First, we highlight the obvious differences, namely: $\ast_{2}$ is defined only for pairs of graphs that are vertex-transitive; and $\ast_{3}$ and $\ast_{4}$ are defined only for rooted graphs and produce rooted graphs. The equivalence of $\ast_{3}$ and $\ast_{4}$ with $\ast_{1}$ arises because all choices of root vertex are equivalent when a graph is vertex-transitive. 

We now turn to the various constructions. In all cases, vertex-transitivity of the factors is required to account for the fact that in the genesis of all graphs, new sheets are attached to the existing part at different vertices.

\begin{proposition}
\label{prop:1&2}
Let $\Gamma_{1}$ and $\Gamma_{2}$ be connected, vertex-transitive graphs. Then the graphs $\Gamma_{1}\ast_{1}\Gamma_{2}$ and $\Gamma_{1}\ast_{2}\Gamma_{2}$ are isomorphic.
\end{proposition}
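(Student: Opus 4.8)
The plan is to exhibit an explicit isomorphism between $\Gamma_1\ast_1\Gamma_2$ and $\Gamma_1\ast_2\Gamma_2$ by comparing their vertex sets and sheet structures. Both constructions ultimately produce a graph glued together from copies of $\Gamma_1$ and $\Gamma_2$, so the strategy is to recognise $\Gamma_1\ast_2\Gamma_2$ as a mashup of $\Gamma_1$ and $\Gamma_2$ in the sense of Definition~\ref{def:mashup}, and then invoke Proposition~\ref{prop:mashup_quotientmap} together with the injectivity argument of Corollary~\ref{cor:uniqueness} to conclude. The vertex-transitivity of $\Gamma_1$ and $\Gamma_2$ is what makes $\ast_2$ well-defined in the first place, and it is also exactly what is needed to produce the embeddings $\varphi_{(x,i)}$: in the M\"oller--Seifter--Woess--Zemlji\'c construction, each copy $\Gamma_1^{(v)}$ (resp.\ $\Gamma_2^{(w)}$) sits inside $\Gamma$ via the identifications along the edges of $T_{m,n}$, and for any vertex $x$ of $\Gamma$ lying in such a copy one can choose, using vertex-transitivity, an automorphism of the copy carrying its ``root'' (the vertex associated to the relevant tree-edge) to $x$, thereby obtaining a well-defined $\Gamma_i$-sheet at $x$.

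First I would set up notation: write $m=|V_1|$, $n=|V_2|$, fix the biregular tree $T_{m,n}$ with bipartition $V_m\sqcup V_n$, the bijections $\psi_v,\psi_w$, and recall from Section~\ref{sec:def_mswz} that $V(\Gamma_1\ast_2\Gamma_2)$ is in bijection with $E(T_{m,n})$, with each copy $\Gamma_1^{(v)}$, $\Gamma_2^{(w)}$ embedded as a subgraph. Then I would verify the four mashup axioms for $M:=\Gamma_1\ast_2\Gamma_2$: condition~\ref{def:mashup_a} (each vertex lies in a $\Gamma_i$-sheet at itself) follows by the vertex-transitivity trick above; condition~\ref{def:mashup_b} (edges are covered by sheets) is immediate since every edge of $\Gamma$ comes from an edge inside some copy $\Gamma_i^{(v)}$; condition~\ref{def:mashup_c} (two sheets are identical, disjoint, or meet in a single vertex) follows from the tree structure of $T_{m,n}$ — two copies are either the same, share no tree-edge, or share exactly one tree-edge, hence one identified vertex; and condition~\ref{def:mashup_d} (the update maps are constant along a sheet) follows because along a fixed copy $\Gamma_i^{(v)}$ the ``other-coordinate'' data is determined by which tree-edges emanate, and the relevant composite maps are, after unwinding, the identity or the constant determined by the gluing vertex. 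By Remark~\ref{rem:mashup}\eqref{rem:mashup_4} I only really need \ref{def:mashup_a} and \ref{def:mashup_d}.

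Having established that $M=\Gamma_1\ast_2\Gamma_2$ is a mashup, Proposition~\ref{prop:mashup_quotientmap} gives a surjective graph homomorphism $\psi:\Gamma_1\ast_1\Gamma_2\to M$ intertwining the update functions. To upgrade surjectivity to bijectivity I would run the same argument as in Corollary~\ref{cor:uniqueness}: by symmetry (recognising $\Gamma_1\ast_1\Gamma_2$ as a mashup, which it is by Example~\ref{ex:mashup}\ref{item:mashup_free_product}) there is also a surjective $\psi':M\to\Gamma_1\ast_1\Gamma_2$, and then an induction on distance from the basepoint — using that $\psi'\circ\psi$ fixes the basepoint and intertwines $\mathbf u$, so that on each successive sheet it is forced to be the identity — shows $\psi'\circ\psi=\id$, hence $\psi$ is injective. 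Alternatively, and perhaps more cleanly, one can argue directly that $\psi$ is injective by the ball-by-ball induction used in Corollary~\ref{cor:associative}. Either way the conclusion $\Gamma_1\ast_1\Gamma_2\cong\Gamma_1\ast_2\Gamma_2$ follows.

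The main obstacle I anticipate is the bookkeeping in verifying condition~\ref{def:mashup_d} for $\Gamma_1\ast_2\Gamma_2$: one must make precise how the ``root'' of each copy $\Gamma_i^{(v)}$ depends on the chosen adjacent tree-edge and check that the two composite maps in Definition~\ref{def:mashup}\ref{def:mashup_d} — restricting a sheet embedding and then re-reading coordinates at another point of the same sheet, versus at a point of an overlapping sheet of the other colour — really are constant. This is essentially the content of the assertion that the identifications $\psi_v,\psi_w$ are compatible across the tree, and it is where vertex-transitivity is used in an essential way (to choose the automorphisms identifying different root vertices of a copy). Once this is cleanly stated, the rest is a routine application of the machinery already developed.
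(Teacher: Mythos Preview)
Your strategy differs from the paper's. The paper constructs the isomorphism $\varphi:\Gamma_{1}\ast_{1}\Gamma_{2}\to\Gamma_{1}\ast_{2}\Gamma_{2}$ directly by induction on word length: having defined $\varphi$ on words of length at most~$l$, for each $\tilde{p}\in V^{(l)}$ it uses vertex-transitivity to choose $\alpha_{\tilde{p}}\in\Aut(\Gamma_{2})$ (or $\Aut(\Gamma_{1})$) carrying $u_{2}(\tilde{p})$ to the gluing vertex $\psi_{w}^{-1}(\{v,w\})$ of the next copy in the $\ast_{2}$-graph, and sets $\varphi(\tilde{p}p_{l+1}):=\alpha_{\tilde{p}}(p_{l+1})$ in the appropriate copy. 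No mashup machinery is invoked.

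Your route through mashups is viable, but two points require care. First, the Corollary~\ref{cor:uniqueness}-style injectivity argument does not transfer as written: Proposition~\ref{prop:mashup_quotientmap} produces surjections \emph{from} a $\ast_{1}$-free product \emph{to} a mashup, so recognising $\Gamma_{1}\ast_{1}\Gamma_{2}$ as a mashup does not by itself yield a map $\psi':M\to\Gamma_{1}\ast_{1}\Gamma_{2}$ --- for that you would need $M=\Gamma_{1}\ast_{2}\Gamma_{2}$ to already be a $\ast_{1}$-free product, which is what you are proving. Your alternative via the ball-by-ball induction of Corollary~\ref{cor:associative} is the correct fix.

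Second, verifying condition~\ref{def:mashup_d} for $\Gamma_{1}\ast_{2}\Gamma_{2}$ is not routine bookkeeping. With the canonical identifications $\Gamma_{i}\cong\Gamma_{i}^{(v)}\hookrightarrow M$ taken as the embeddings, the second map in~\ref{def:mashup_d} evaluated on the $\Gamma_{1}$-sheet $\Gamma_{1}^{(v_{0})}$ sends the vertex corresponding to the tree-edge $\{v_{0},w\}$ to $\psi_{w}^{-1}(\{v_{0},w\})\in V(\Gamma_{2})$, and this genuinely varies with~$w$ for generic choices of the bijections $\psi_{w}$. To force constancy you must twist each embedding $\varphi_{(x,i)}$ by an automorphism carrying a fixed basepoint to the actual gluing vertex, and these twists have to be chosen coherently by recursion along $T_{m,n}$ from a root edge. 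That recursive family of automorphisms is exactly the family $(\alpha_{\tilde{p}})$ appearing in the paper's direct proof. So your approach repackages the same induction inside the mashup verification; it earns a clean appeal to Proposition~\ref{prop:mashup_quotientmap} at the end but does not shorten the core argument.
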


\begin{proof}
Let $\Gamma':=\Gamma_{1}\ast_{2}\Gamma_{2}$. Recall that there is a bijection $V(\Gamma')\to E(T_{m,n})$ with the vertex $x\in V(\Gamma')$, corresponding to $\{v,w\}\in E(T_{m,n})$, being defined by identifying $\psi^{-1}_{v}(\{v,w\})\in V(\Gamma^{(v)})$ with $\psi^{-1}_{w}(\{v,w\})\in V(\Gamma^{(w)})$. Pick $x_{0}\in V(\Gamma')$ corresponding to $\{v_0,w_0\}\in E(T_{m,n})$ and initialise $\Gamma:=\Gamma_{1}\ast_{1}\Gamma_{2}$ with 
$$
\smash{\mathbf{u}(\emptyset)=(\psi^{-1}_{v_0}(\{v_0,w_0\}),\psi^{-1}_{w_0}(\{v_0,w_0\}))}.
$$  

Recall that $V^{(l)}$ denotes the set of admissible words of length~$l$ in $V(\Gamma)$ and define $\varphi:\Gamma\to\Gamma'$ inductively on $l$ as follows. Set $\varphi(\emptyset)=x_{0}$. Next, $\smash{p\in V^{(1)}}$ lies on either the $\Gamma_1$-sheet or the $\Gamma_2$-sheet at $\emptyset$, that is, either $p\in V(\Gamma_1)\setminus\{ \psi^{-1}_{v_0}(\{v_0,w_0\})\}$ or $p\in V(\Gamma_2)\setminus\{\psi^{-1}_{w_0}(\{v_0,w_0\})\}$. Define $\varphi(p) = p\in V(\Gamma^{(v_0)})\subset V(\Gamma')$ in the first case and $\varphi(p) = p\in V(\Gamma^{(w_0)})\subset V(\Gamma')$ in the second. Then $\varphi$ is an isomorphism
$$
\mathrm{span}_{\Gamma}(V^{(0)}\cup V^{(1)})\to\bigl(\Gamma^{(v_0)},\psi^{-1}_{v_0}(\{v_0,w_0\})\bigr)\# \bigl(\Gamma^{(w_0)},\psi^{-1}_{w_0}(\{v_0,w_0\})\bigr)
$$ 
Now suppose that $\varphi$ has been defined on words up to length $l\in\bbN_{\ge 1}$ and consider $\tilde{p} = p_{1}\ldots p_{l} \in V^{(l)}$. Without loss of generality, assume that $p_{l}\in V(\Gamma_{1})$. Then the $\Gamma_2$-sheet of $\Gamma$ at $\tilde{p}$ is attached at $u_{2}(\tilde{p})$, while, if $\varphi(\tilde{p})\in V(\Gamma')$ corresponds to $\{v,w\}\in E(T_{m,n})$, then $\varphi(\tilde{p})$ is identified with $\psi^{-1}_{w}(\{v,w\})\in V(\Gamma^{(w)})$. Since $\Gamma_{2}$ is vertex-transitive, there is $\alpha_{\tilde{p}}\in\Aut(\Gamma_{2})$ such that $\smash{\alpha_{\tilde{p}}(u_{2}(\tilde{p}))}=\psi^{-1}_{w}(\{v,w\})$. Choose such $\alpha_{\tilde{p}}$ for each ${\tilde{p}}\in V^{(l)}$ and extend the definition of $\varphi$ to ${\tilde{p}}p_{l+1}\in V^{(l+1)}$ by 
$$
\varphi(\tilde{p}p_{l+1}):=\begin{cases}
\alpha_{\tilde{p}}(p_{l+1})\in V(\Gamma^{(w)})\setminus \{\psi^{-1}_{w}(\{v,w\})\}, & \mbox{ if }p_l\in V(\Gamma_{1})\\
\alpha_{\tilde{p}}(p_{l+1})\in V(\Gamma^{(v)})\setminus \{\psi^{-1}_{v}(\{v,w\})\}, & \mbox{ if }p_l\in V(\Gamma_{2})
\end{cases}.
$$ 
Since $\alpha_{\tilde{p}}$ is an automorphism for each ${\tilde{p}}\in V^{(l)}$, it follows that $\varphi$ is a graph isomorphism.
\end{proof}

As an immediate consequence of Corollary~\ref{cor:uniqueness} and Proposition~\ref{prop:1&2} we see that the definition of $\ast_{2}$ does not depend on the choice of the bijections involved.
\begin{corollary}
\label{cor:2_well_defined}
Let $\Gamma_{1}$ and $\Gamma_{2}$ be connected, vertex-transitive graphs. Then the isomorphism type of $\Gamma_{1}\ast_{2}\Gamma_{2}$ is well-defined.
\end{corollary}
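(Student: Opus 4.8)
The plan is to combine Proposition~\ref{prop:1&2} with Corollary~\ref{cor:uniqueness}. The $\ast_{2}$-construction of Section~\ref{sec:def_mswz} depends a priori on the choice of the bijections $\psi_{v}:V_{1}^{(v)}\to E_{T_{m,n}}(v)$ $(v\in V_{m})$ and $\psi_{w}:V_{2}^{(w)}\to E_{T_{m,n}}(w)$ $(w\in V_{n})$, so asserting that its isomorphism type is well-defined amounts to showing that any two such systems of choices yield isomorphic graphs.

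First I would let $\Gamma'$ and $\Gamma''$ be the graphs obtained by applying the $\ast_{2}$-construction to $\Gamma_{1}$ and $\Gamma_{2}$ with two arbitrary systems of bijections. The proof of Proposition~\ref{prop:1&2} takes an arbitrary such graph as input and exhibits a graph isomorphism from a copy of $\Gamma_{1}\ast_{1}\Gamma_{2}$ onto it, where that copy is initialised by $\mathbf{u}(\emptyset)=(\psi_{v_{0}}^{-1}(\{v_{0},w_{0}\}),\psi_{w_{0}}^{-1}(\{v_{0},w_{0}\}))$ for a chosen edge $\{v_{0},w_{0}\}\in E(T_{m,n})$. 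Applying this to $\Gamma'$ and to $\Gamma''$ in turn produces isomorphisms $\Lambda'\to\Gamma'$ and $\Lambda''\to\Gamma''$, where $\Lambda'$ and $\Lambda''$ are copies of $\Gamma_{1}\ast_{1}\Gamma_{2}$ that may carry different initialisations of their update functions. By Corollary~\ref{cor:uniqueness}, $\Lambda'\cong\Lambda''$, and hence $\Gamma'\cong\Lambda'\cong\Lambda''\cong\Gamma''$, which is the claim.

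There is essentially no obstacle here: all the content sits in Proposition~\ref{prop:1&2} and Corollary~\ref{cor:uniqueness}. The only point deserving a moment's attention is that Proposition~\ref{prop:1&2} genuinely applies to every instance of the $\ast_{2}$-construction rather than to one fixed instance; this is clear from the way its proof begins (``Let $\Gamma':=\Gamma_{1}\ast_{2}\Gamma_{2}$'', with no assumption placed on the bijections), so both $\Gamma'$ and $\Gamma''$ are covered.
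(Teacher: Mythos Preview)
Your proof is correct and is exactly the argument the paper intends: the corollary is stated as an immediate consequence of Corollary~\ref{cor:uniqueness} and Proposition~\ref{prop:1&2}, and you have simply spelled out that immediate consequence in detail.
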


As to the definition of Pisanski--Tucker in Section~\ref{sec:def_pt}, we have the following.

\begin{proposition}
Let $(\Gamma_{1},r_{1})$ and $(\Gamma_{2},r_{2})$ be connected, vertex-transitive, rooted graphs. Then the graphs $\Gamma_{1}\ast_{1}\Gamma_{2}$ and $\Gamma_{1}\ast_{3}\Gamma_{2}$ are isomorphic.
\end{proposition}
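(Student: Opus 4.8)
The plan is to show that the Pisanski--Tucker construction $\Gamma_1 \ast_3 \Gamma_2$ is, after choosing appropriate embeddings, a mashup of $\Gamma_1$ and $\Gamma_2$ in the sense of Definition~\ref{def:mashup}, and then invoke Proposition~\ref{prop:mashup_quotientmap} together with a left-inverse argument exactly as in the proof of Corollary~\ref{cor:uniqueness}. In fact, since vertex-transitivity is assumed, it is enough to identify the sheet structure. Recall that in $\Gamma' := \Gamma_1 \ast_3 \Gamma_2$ the vertices are the alternating words $w$ in $(V(\Gamma_1)\setminus\{r_1\}) \sqcup (V(\Gamma_2)\setminus\{r_2\})$ (including the empty word), and two words are adjacent either when one is obtained from the other by appending a neighbour of the relevant root, or when they agree outside the last letter and those last letters are adjacent in the appropriate $\Gamma_i$. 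From this description one reads off that each copy of $\Gamma_1$ (respectively $\Gamma_2$) glued in during the construction corresponds to a sheet: namely, fixing an alternating word $u$ ending in a letter of $V(\Gamma_2)$ (or $u = \emptyset$), the set $\{u\} \cup \{ux : x \in V(\Gamma_1)\setminus\{r_1\}\}$ spans a copy of $\Gamma_1$ with $u$ playing the role of $r_1$; symmetrically for $\Gamma_2$-sheets. This gives embeddings $\varphi_{(x,i)} : \Gamma_i \to \Gamma'$ for every vertex $x$ and $i \in \{1,2\}$.

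Next I would verify the mashup axioms. Axiom~\ref{def:mashup_a} is immediate from the description of the sheets just given. Axiom~\ref{def:mashup_b}, that every edge lies in some sheet, follows because both kinds of adjacency in the Pisanski--Tucker graph (append-a-root-neighbour, or change-the-last-letter) are internal to one of the copies described above. For axioms~\ref{def:mashup_c} and~\ref{def:mashup_d} I would use Remark~\ref{rem:mashup}\eqref{rem:mashup_4}, which says \ref{def:mashup_a} and \ref{def:mashup_d} imply \ref{def:mashup_c}; so it suffices to check \ref{def:mashup_d}, i.e.\ that the relevant ``transfer'' maps $\Gamma_i \to \Gamma_i$ and $\Gamma_i \to \Gamma_j$ are constant on each sheet. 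Here vertex-transitivity of $\Gamma_1$ and $\Gamma_2$ enters: when we move from one vertex $y$ of a $\Gamma_1$-sheet to another vertex $y'$ of the same sheet, the root of the $\Gamma_2$-sheet attached at $y'$ differs from the one attached at $y$, but since $\Gamma_2$ is vertex-transitive we may pre-compose the embedding $\varphi_{(y',2)}$ with an automorphism of $\Gamma_2$ carrying one root to the other; doing this coherently (as in the proof of Proposition~\ref{prop:1&2}) makes the transfer maps constant. Concretely, one fixes once and for all, for each vertex $v \in V(\Gamma_1)$, an automorphism of $\Gamma_2$ sending $r_2$ to a prescribed vertex, and likewise for $\Gamma_2$; the update function $\mathbf{u}$ of the resulting mashup then records which vertex of each $\Gamma_i$ is currently serving as root.

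With $\Gamma'$ exhibited as a mashup, Proposition~\ref{prop:mashup_quotientmap} yields a surjective graph homomorphism $\psi : \Gamma_1 \ast_1 \Gamma_2 \to \Gamma'$ intertwining the update functions, once we initialise $\mathbf{u}(\emptyset)$ to match the value of the mashup's update function at the empty word of $\Gamma'$. To upgrade $\psi$ to an isomorphism I would repeat verbatim the argument in Corollary~\ref{cor:uniqueness}: $\Gamma_1 \ast_1 \Gamma_2$ is itself a mashup of $\Gamma_1, \Gamma_2$ (Example~\ref{ex:mashup}\ref{item:mashup_free_product}), so Proposition~\ref{prop:mashup_quotientmap} also gives a surjection $\psi' : \Gamma' \to \Gamma_1 \ast_1 \Gamma_2$; adjusting base points so that $\psi'$ maps the chosen base vertex to $\emptyset$, the composite $\psi' \circ \psi$ fixes $\emptyset$ and intertwines update functions, and an induction on $d(\emptyset, \cdot)$ (using that a vertex at distance $d+1$ has a neighbour at distance $d$ lying in a common sheet, and that the update coordinate of the relevant factor determines which vertex of that sheet it is) shows $\psi' \circ \psi = \mathrm{id}$. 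Hence $\psi$ is a bijection, so $\Gamma_1 \ast_1 \Gamma_2 \cong \Gamma_1 \ast_3 \Gamma_2$.

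The main obstacle is the bookkeeping in verifying axiom~\ref{def:mashup_d}: one must choose the automorphisms of $\Gamma_1$ and $\Gamma_2$ used to re-root successive copies in a way that is genuinely consistent across all sheets simultaneously, so that the two transfer maps in Definition~\ref{def:mashup}\ref{def:mashup_d} come out constant; this is exactly the point where vertex-transitivity is indispensable, and it parallels (and can borrow from) the inductive construction of the isomorphism $\varphi$ in the proof of Proposition~\ref{prop:1&2}. Everything after that is a mechanical replay of Corollary~\ref{cor:uniqueness}.
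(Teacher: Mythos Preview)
Your left-inverse step contains a genuine gap. Proposition~\ref{prop:mashup_quotientmap} produces a surjection \emph{from the free product} $\Conv_{i}\Gamma_i$ \emph{to} an arbitrary mashup $M$; the domain is always the free product of Section~\ref{sec:def_new}. In Corollary~\ref{cor:uniqueness} the roles can be interchanged only because both $\Gamma$ and $\Gamma'$ there are free products in that sense (merely with different initial values of $\mathbf{u}$). In your situation $\Gamma'=\Gamma_1\ast_3\Gamma_2$ is not, a priori, a free product of Section~\ref{sec:def_new}, so applying Proposition~\ref{prop:mashup_quotientmap} with $M=\Gamma_1\ast_1\Gamma_2$ only yields a map $\Gamma_1\ast_1\Gamma_2\to\Gamma_1\ast_1\Gamma_2$, not the claimed $\psi':\Gamma'\to\Gamma_1\ast_1\Gamma_2$. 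The fix is to bypass the left inverse and argue injectivity of $\psi$ directly, as in Corollary~\ref{cor:associative}: since $\psi$ intertwines the update functions and restricts to bijections between corresponding sheets, it is determined (hence injective) on balls of every radius by induction.

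Even with that repair, your route is considerably heavier than the paper's. The paper simply builds an explicit isomorphism $\varphi:\Gamma_1\ast_1\Gamma_2\to\Gamma_1\ast_3\Gamma_2$ by induction on word length: initialise $\mathbf{u}(\emptyset)=(r_1,r_2)$, send $\emptyset$ to the empty word, and at each step use vertex-transitivity of $\Gamma_i$ to pick $\alpha\in\Aut(\Gamma_i)$ with $\alpha(u_i(\tilde{v}))=r_i$, then set $\varphi(\tilde{v}v_{l+1})=\varphi(\tilde{v})\alpha(v_{l+1})$. The very automorphisms you would have to choose inductively to force axiom~\ref{def:mashup_d} of the mashup are exactly these $\alpha$'s, so the ``main obstacle'' you flag amounts to reconstructing the paper's map inside the mashup framework and then invoking extra machinery to extract it again. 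Your approach is correct in spirit and illustrates the universality of the mashup notion, but it does not shorten the argument; the direct construction is both simpler and what the paper actually does.
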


\begin{proof}
Let $\Gamma':=\Gamma_{1}\ast_{3}\Gamma_{2}$. Initialise $\Gamma:=\Gamma_{1}\ast_{1}\Gamma_{2}$ with $u(\emptyset)=(r_{1},r_{2})$. Now define $\varphi:\Gamma\to\Gamma'$ as follows: Set $\varphi(\emptyset)=\varepsilon$, where $\varepsilon\in V(\Gamma')$ is the empty string, and assume that $\varphi$ has been defined on words up to length $l\in\bbN_{0}$. Consider $\tilde{v}=v_{1}\ldots v_{l}\in V(\Gamma)$ and suppose $\tilde{v}v_{l+1}\in V(\Gamma)$, where $v_{l+1}\in V(\Gamma_{i})$. Since $\Gamma_{i}$ is vertex-transitive, there is $\alpha\in\Aut(\Gamma_{i})$ such that $\alpha(u_{i}(\tilde{v}))=r_{i}$. Define $\varphi(\tilde{v}v_{l+1})=\varphi(\tilde{v})\alpha(v_{l+1})$. Since all the maps $\alpha$ are automorphisms, it follows that $\varphi$ is a graph isomorphism.
\end{proof}

A similar argument applies in the case of Quenell's definition from Section~\ref{sec:def_q}.

\begin{proposition}
Let $(\Gamma_{1},r_{1})$ and $(\Gamma_{2},r_{2})$ be connected, vertex-transitive, rooted graphs. Then the graphs $\Gamma_{1}\ast_{1}\Gamma_{2}$ and $\Gamma_{1}\ast_{4}\Gamma_{2}$ are isomorphic. \qed
\end{proposition}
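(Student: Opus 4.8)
The plan is to mirror the two preceding propositions almost verbatim, replacing the Pisanski--Tucker target with Quenell's construction. Let $\Gamma' := \Gamma_{1}\ast_{4}\Gamma_{2}$ and recall from Section~\ref{sec:def_q} that, for $n=2$, $\Gamma'$ is obtained by gluing the graphs $B_{1}$ and $B_{2}$ at their respective basepoint vertices $v_{1}$ and $v_{2}$, where $B_{1}$ is a connected sum of copies of $\Gamma_{1}$ and of $B_{2}$, and symmetrically for $B_{2}$. As noted in the excerpt, when $n=2$ the vertex $v_{1}\in B_{1}$ (equivalently $v_{2}\in B_{2}$, after gluing) plays the role of the empty word, the vertices of $B_{1}$ that are non-empty or begin with a letter of $V_{1}\setminus\{r_{1}\}$ correspond to words whose first letter lies in $V(\Gamma_{1})$, and similarly for $B_{2}$. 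Thus $V(\Gamma')$ is naturally in bijection with alternating words in $V_{1}\setminus\{r_{1}\}\sqcup V_{2}\setminus\{r_{2}\}$, and the adjacency relation is exactly the same as in the Pisanski--Tucker graph: $w\sim w'$ iff $w' = wx$ with $x$ adjacent to the relevant root, or $w = ux$, $w' = uy$ with $\{x,y\}$ an edge of $\Gamma_{1}$ or $\Gamma_{2}$.

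Given this, I would initialise $\Gamma := \Gamma_{1}\ast_{1}\Gamma_{2}$ with $\mathbf{u}(\emptyset) = (r_{1},r_{2})$ and build an isomorphism $\varphi:\Gamma\to\Gamma'$ by induction on word length, exactly as in the Pisanski--Tucker proof. Set $\varphi(\emptyset)$ equal to the glued basepoint of $\Gamma'$. Assuming $\varphi$ is defined on words of length $\le l$ and is a graph isomorphism onto the span of the corresponding vertices of $\Gamma'$, consider $\tilde{v} = v_{1}\dots v_{l}\in V(\Gamma)$ and $\tilde{v}v_{l+1}\in V(\Gamma)$ with $v_{l+1}\in V(\Gamma_{i})$. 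The $\Gamma_{i}$-sheet of $\Gamma$ at $\tilde{v}$ is attached at $u_{i}(\tilde{v})$, whereas the corresponding freshly-attached copy of $\Gamma_{i}$ in $\Gamma'$ (a factor in the relevant connected sum defining $B_{i}$, or $B_{3-i}$) is attached along its root $r_{i}$; vertex-transitivity of $\Gamma_{i}$ supplies $\alpha_{\tilde{v},i}\in\Aut(\Gamma_{i})$ with $\alpha_{\tilde{v},i}(u_{i}(\tilde{v})) = r_{i}$. Choose one such automorphism for each such $\tilde{v}$ and $i$ and extend $\varphi$ by $\varphi(\tilde{v}v_{l+1}) := \varphi(\tilde{v})\,\alpha_{\tilde{v},i}(v_{l+1})$, reading the right-hand side as a word in $\Gamma'$. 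Because each $\alpha_{\tilde{v},i}$ is a graph automorphism of $\Gamma_{i}$ and Remark~\ref{rem:def_obs}\ref{item:valency} guarantees the sheets of $\Gamma$ at $\tilde{v}$ account for all of its neighbours, this extension is a bijection on the new vertices preserving adjacency; the inductive hypothesis handles edges within previously-constructed sheets.

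The main obstacle is purely bookkeeping rather than conceptual: one has to unwind the mutual recursion defining the $B_{i}$ to verify that, at the vertex $\varphi(\tilde{v})$ of $\Gamma'$, the unique copy of $\Gamma_{i}$ not yet "used up" is attached precisely along its root $r_{i}$, so that the word-concatenation $\varphi(\tilde{v})\alpha_{\tilde{v},i}(v_{l+1})$ indeed names a vertex of $\Gamma'$ adjacent to $\varphi(\tilde{v})$. This is exactly the content of the paragraph in Section~\ref{sec:def_q} identifying the Quenell construction for $n=2$ with the Pisanski--Tucker one, so in practice the cleanest route is to first establish $\Gamma_{1}\ast_{3}\Gamma_{2}\cong\Gamma_{1}\ast_{4}\Gamma_{2}$ directly from that identification of vertex sets and edge relations, and then invoke the previous proposition; alternatively one repeats the inductive argument above, which goes through mutatis mutandis once the attachment points are pinned down. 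Either way, since all the maps $\alpha_{\tilde{v},i}$ are automorphisms, $\varphi$ is a graph isomorphism, and by Corollary~\ref{cor:uniqueness} the choice $\mathbf{u}(\emptyset) = (r_{1},r_{2})$ entails no loss of generality.
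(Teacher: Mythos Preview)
Your proposal is correct and follows precisely the route the paper intends: the paper gives no proof at all beyond the remark that ``a similar argument applies'' and the \qed, and your inductive construction using vertex-transitivity to realign attachment points via automorphisms $\alpha_{\tilde{v},i}$ is exactly that similar argument spelled out. Your observation that one may alternatively pass through $\ast_{3}$ via the identification in Section~\ref{sec:def_q} is also in the spirit of the paper's presentation.
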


\section{Automorphisms of Free Product Graphs}

It was seen in Equation~\ref{eq:intertwine} that free product graphs admit many automorphisms which preserve the update function. Three additional types of automorphisms of free product graphs are described in this section.

The first proposition shows that further automorphisms arise from automorphisms of the free product's factors.
\begin{proposition}\label{prop:aut_intertwine}
Let $\Gamma_{1},\ldots,\Gamma_{n}$ be connected graphs and initialise $\Gamma:=\Conv_{i=1}^n \Gamma_i$ with $\mathbf{u}(\emptyset)=(u_1,\dots, u_n)\in\prod_{i=1}^{n}V(\Gamma_{i})$. Given $j\!\in\!\{1,\ldots,n\}$ and $\alpha\!\in\! \Aut(\Gamma_j)$ define
\begin{displaymath}
 \tilde{\alpha}:\bigsqcup_{i=1}^{n}V(\Gamma_{i})\to\bigsqcup_{i=1}^{n}V(\Gamma_{i}),\ v\mapsto\begin{cases}v & \text{if }v\in V(\Gamma_{i}),\ i\neq j \\ \alpha(v) & \text{if }v\in V(\Gamma_{j})\end{cases}.
\end{displaymath}
Let $\tilde{v}\!=\!v_{1}\ldots v_{l}\in V(\Gamma)$. When $\alpha(u_j)\!=\!u_j$, let $\hat{\alpha}(\tilde{v})\!:=\!\tilde{\alpha}(v_1)\dots \tilde{\alpha}(v_l)$. Otherwise, set
\begin{displaymath}
\hat{\alpha}(\tilde{v}) := \begin{cases} 
\alpha(u_j)\tilde{\alpha}(v_1)\dots \tilde{\alpha}(v_l) &  \text{if } v_1\not\in \Gamma_j \mbox{ or } l=0\\
\tilde{\alpha}(v_2)\dots \tilde{\alpha}(v_l) & \text{if } v_1\in \Gamma_j\mbox{ and }\alpha(v_1) = u_j\\
\tilde{\alpha}(v_1)\dots \tilde{\alpha}(v_l) & \text{otherwise}
\end{cases}.
\end{displaymath}
Then $\hat{\alpha}\!\in\!\Aut(\Gamma)$ and the map $\Aut(\Gamma_{j})\!\to\!\Aut(\Gamma),\ \alpha\mapsto\hat{\alpha}$ is a group monomorphism. Moreover, $\mathbf{u}\circ\hat{\alpha} = \overline{\alpha}\circ\mathbf{u}$, where $\overline{\alpha} = \id\times\dots\times\alpha\times\dots\times\id\in \Aut(\prod_{i=1}^n \Gamma_i)$.  
\end{proposition}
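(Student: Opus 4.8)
The plan is to verify directly that $\hat\alpha$ is a well-defined bijection of $V(\Gamma)$ preserving the edge relation, to check the intertwining identity $\mathbf{u}\circ\hat\alpha=\overline\alpha\circ\mathbf{u}$, and to deduce the homomorphism and injectivity statements formally. Throughout I would separate the easy case $\alpha(u_j)=u_j$, where $\hat\alpha$ acts letterwise via $\tilde\alpha$, from the general case, where one must handle the prefix of $\tilde v$ carefully because $\tilde\alpha$ may map the first letter $v_1\in V(\Gamma_j)$ onto $u_j$ (which is not allowed as a first letter of an admissible word) or away from $u_j$ (in which case the word stays admissible but the ``invisible'' $\Gamma_j$-letter $u_j$ at the empty word has effectively been moved).

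\textbf{Step 1: $\hat\alpha$ lands in $V(\Gamma)$.} Using Remark~\ref{rem:def_obs}(i), a word $w_1\dots w_m$ is admissible iff consecutive letters lie in distinct factors and $w_{k+1}\neq u_{h}(w_1\dots w_k)$ when $w_{k+1}\in V(\Gamma_h)$. For the case $\alpha(u_j)=u_j$: applying $\tilde\alpha$ preserves which factor each letter lies in, hence the ``distinct factors'' condition; and since $\mathbf u(\emptyset)=(u_1,\dots,u_n)$ with $u_j$ fixed by $\alpha$, one shows by induction along the word that $\mathbf u(\tilde\alpha(v_1)\dots\tilde\alpha(v_k))=\overline\alpha(\mathbf u(v_1\dots v_k))$, using~\eqref{def:update}; this simultaneously gives the admissibility inequality and sets up Step~3. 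For the general case, I would observe that prepending or deleting the leading $\tilde\alpha(v_1)$ exactly compensates for the fact that the ``current'' $\Gamma_j$-vertex at $\emptyset$ has moved from $u_j$ to $\alpha(u_j)$: in the first subcase the new word starts with $\alpha(u_j)\in V(\Gamma_j)$ followed by $\tilde\alpha(v_1)\notin V(\Gamma_j)$, admissible since $\alpha(u_j)\neq u_j$ and the tail is admissible; in the second subcase $\alpha(v_1)=u_j$ forces $v_1\neq u_j$, so $v_1$ was a genuine letter and after deletion $\tilde\alpha(v_2)\dots$ must be checked to start with a non-$\Gamma_j$ letter distinct from $\alpha(u_j)=\mathbf u(\emptyset)_j$—but $v_2\notin V(\Gamma_j)$ already; the third subcase is the letterwise map on a word already not starting with a $\Gamma_j$-affected problem.

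\textbf{Step 2: $\hat\alpha$ is an edge-automorphism.} By Remark~\ref{rem:def_obs}(ii) a vertex $\tilde v$ has exactly one neighbour for each neighbour of each $u_i(\tilde v)$ in $\Gamma_i$; equivalently $\Gamma$ is the union of its sheets, each sheet a copy of some $\Gamma_i$. I would argue that $\hat\alpha$ permutes the sheets: a $\Gamma_i$-sheet with $i\neq j$ is mapped to a $\Gamma_i$-sheet by the identity-on-$V(\Gamma_i)$ behaviour of $\tilde\alpha$ together with the prefix bookkeeping, while a $\Gamma_j$-sheet is carried by $\alpha$ to another $\Gamma_j$-sheet. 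Concretely, using the explicit edge rules~\eqref{eq:nonemptyneighbours1}--\eqref{eq:nonemptyneighbours2}, an edge within a $\Gamma_j$-sheet at $\tilde v$ is of the form $\{v_1\dots v_{l-1}v_l,\,v_1\dots v_{l-1}v_l'\}$ with $\{v_l,v_l'\}\in E_j$, and applying $\tilde\alpha$ sends this to $\{\dots\tilde\alpha(v_l),\dots\tilde\alpha(v_l')\}$ with $\{\alpha(v_l),\alpha(v_l')\}\in E_j$ since $\alpha\in\Aut(\Gamma_j)$; edges within $\Gamma_i$-sheets ($i\neq j$) and the transition edges~\eqref{eq:nonemptyneighbours2} are preserved because neither the factor of a letter nor the relevant update-coordinate is disturbed, which is exactly what Step~3's identity encodes. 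Bijectivity follows because $\widehat{\alpha^{-1}}$ is an explicit two-sided inverse—this is most cleanly seen from the homomorphism property in Step~4, so one can either check it by hand here or defer it.

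\textbf{Step 3 (the intertwining identity) and Step 4 (functoriality).} The identity $\mathbf u\circ\hat\alpha=\overline\alpha\circ\mathbf u$ is proved by induction on word length using~\eqref{eq:initial} and~\eqref{def:update}: at $\emptyset$ both sides are $(u_1,\dots,u_j\text{ or }\alpha(u_j),\dots,u_n)$ matched against $\overline\alpha(u_1,\dots,u_n)$, and the inductive step is immediate once one knows $\hat\alpha$ appends the letter $\tilde\alpha(v_{l+1})$ (in the same factor), since~\eqref{def:update} only changes the coordinate of that factor. For Step~4, $\widehat{\alpha\beta}=\hat\alpha\hat\beta$ and $\widehat{\id}=\id$ follow either from the explicit formula (a short case analysis) or, more slickly, from the characterisation in the paragraph after Corollary~\ref{cor:uniqueness}: an automorphism fixing $\emptyset$'s image-type and intertwining $\mathbf u$ with a prescribed $\overline\alpha$ is determined by its value on $\emptyset$, so both $\widehat{\alpha\beta}$ and $\hat\alpha\hat\beta$ agree; injectivity of $\alpha\mapsto\hat\alpha$ is then clear since $\hat\alpha$ determines $\overline\alpha=\mathbf u\circ\hat\alpha\circ(\mathbf u|_{V^{(1)}})^{-1}$ hence $\alpha$.

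\textbf{Main obstacle.} The genuinely fiddly part is Step~1 in the case $\alpha(u_j)\neq u_j$: the three-way formula for $\hat\alpha(\tilde v)$ is doing subtle bookkeeping—reconciling the ``phantom'' $\Gamma_j$-letter $u_j$ sitting at the empty word with an actual leading $\Gamma_j$-letter after $\alpha$ is applied—and one must check in each subcase both that the output is a genuine admissible word (no two consecutive letters in the same factor, no forbidden-equal-to-update letter) and that lengths/prefixes match up so that Step~2 and Step~3 go through uniformly. I would handle this by proving a single clean lemma first: for any $\tilde v$, $\hat\alpha(\tilde v)$ is the unique admissible word $\tilde w$ with $\mathbf u(\tilde w)=\overline\alpha(\mathbf u(\tilde v))$ lying in the $\hat\alpha$-image of the geodesic from $\emptyset$ to $\tilde v$ (equivalently, defined by the corresponding sequence of sheet-transitions), and then the case analysis becomes a verification rather than a derivation.
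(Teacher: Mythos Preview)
Your proposal is correct and follows essentially the same route as the paper: verify admissibility of $\hat\alpha(\tilde v)$ by a case split on whether $\alpha(u_j)=u_j$ (the paper tracks the positions $k_1<\cdots<k_r$ of the $\Gamma_j$-letters and checks Equation~\eqref{eq:admissible} directly in each of the three subcases), establish the intertwining identity alongside this, and then argue edge preservation via the sheet structure exactly as you describe. The paper's argument for the monomorphism claim is the direct one---$\hat\alpha$ restricts to $\alpha$ on every $\Gamma_j$-sheet---rather than your alternative via uniqueness after Corollary~\ref{cor:uniqueness}, but both work and the core verifications are the same.
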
 

\begin{proof}
We first show that $\hat{\alpha}(\tilde{v})\!\in\! V(\Gamma)$. Since $\tilde{\alpha}$ preserves $V(\Gamma_{i})$ for all $i\!\in\!\{1,\ldots,n\}$, any two consecutive letters in the word $\hat{\alpha}(\tilde{v})$ belong to distinct graphs. As to the other half of Equation~\eqref{eq:admissible}, note that only the $j$-th component of $\mathbf{u}$ is affected by~$\hat{\alpha}$. Let $k_1<\cdots<k_r\in\{1,\ldots,l\}$ be the indices for which $v_{k_s}\in V(\Gamma_{j})$ ($s\in\{1,\ldots,r\}$).

First, consider the case when $\alpha(u_j) = u_j$. Since $\tilde{v}\in V(\Gamma)$, we have $v_{k_1}\ne u_j$ and $v_{k_{s+1}}\ne v_{k_s}$ for $s\geq 1$. Then $\alpha(v_{k_1})\ne u_j$ and $\alpha(v_{k_{s+1}})\ne \alpha(v_{k_s})$ for $s\geq1$ because $\alpha(u_{j})=u_{j}$ and $\alpha$ is a bijection. Hence $\hat{\alpha}(\tilde{v})\in V(\Gamma)$. The assertion $\mathbf{u}\circ\hat{\alpha} = \overline{\alpha}\circ\mathbf{u}$ is immediate from the definition of $\hat{\alpha}$.

Now suppose $\alpha(u_j) \ne u_j$. There are several cases to consider: If $l=0$ or $v_1\not\in \Gamma_j$, then the first letter of $\hat{\alpha}(\tilde{v})$ is $\alpha(u_j)$, which belongs to $\Gamma_j$ and is not equal to $u_j$. The remaining letters from $\Gamma_j$ are $\alpha(v_{k_s})$ ($s\geq1$) and satisfy $\alpha(v_{k_1})\ne \alpha(u_j)$ and $\alpha(v_{k_{s+1}}) \ne \alpha(v_{k_s})$ ($s\geq1$) because $\alpha$ is a bijection. If $v_1\in \Gamma_j$ and $\alpha(v_1) = u_j$, then the letters from $\Gamma_j$ are $\alpha(v_{k_s})$ ($s\geq2$) and satisfy $\alpha(v_{k_2})\ne u_j$ and $\alpha(v_{k_{s+1}}) \ne \alpha(v_{k_s})$ ($s\geq2$) because $v_{k_{2}}\neq v_{1}$ and $\alpha$ is a bijection. Finally, if $v_1\in \Gamma_j$ and $\alpha(v_1)\ne u_j$, then $k_1 = 1$ and we have that $\alpha(v_{k_1})\ne u_j$ and $\alpha(v_{k_{s+1}})\ne \alpha(v_{k_s})$ ($s\geq1$). Again, we conclude $\mathbf{u}\circ\hat{\alpha} = \overline{\alpha}\circ\mathbf{u}$.

To see that $\hat{\alpha}\in\Aut(\Gamma)$, recall that every edge of $\Gamma$ belongs to a unique sheet. Since $\hat{\alpha}$ preserves $\Gamma_j$-sheets and acts like $\alpha$ on every such sheet, edge relations within $\Gamma_j$-sheets are preserved. Furthermore, $\hat{\alpha}$ permutes the $\Gamma_i$-sheets attached to a given $\Gamma_j$-sheet and acts like the identity on every such $\Gamma_{i}$-sheet. Hence edge-relations in $\Gamma_{i}$-sheets ($i\neq j$) are preserved as well. We conclude that $\hat{\alpha}\in\Aut(\Gamma)$.

The fact that $\hat{\alpha}$ preserves $\Gamma_j$-sheets and acts like $\alpha$ on every such sheet implies that the map $\Aut(\Gamma_{i})\to\Aut(\Gamma)$, $\alpha\mapsto \hat{\alpha}$ is an injective homomorphism.
\end{proof}

\begin{remark}
Retain the notation of Proposition~\ref{prop:aut_intertwine}. In the context of cartesian products, images of distinct embeddings $\Aut(\Gamma_{j})\!\to\!\Aut(\prod_{i=1}^{n}\Gamma_{i})$, $\alpha\mapsto\overline{\alpha}$ commute, whereas commutators of automorphisms in distinct images under the embeddings $\Aut(\Gamma_{j})\to\Aut(\Conv_{i=1}^{n}\Gamma_{i})$, $\alpha\mapsto\hat{\alpha}$ yield automorphisms of the type in Equation~\eqref{eq:intertwine}.
\end{remark}

\begin{example}
Consider the free product $\Gamma:=T_{1}\ast T_{1}\cong T_{2}$. Here, $\Aut(T_{1})\cong C_{2}$ and the two embedded automorphism groups of Proposition~\ref{prop:aut_intertwine} generate the infinite dihedral group $C_{2}\ast C_{2}\cong D_{\infty}=\Aut(T_{2})$. In particular, they do not commute.
\end{example}

Retain the notation of Proposition~\ref{prop:aut_intertwine}. Even if we choose groups $G_i\leq \Aut(\Gamma_i)$ $i\in\{1,\ldots,n\}$ with finitely many vertex-orbits on the respective $\Gamma_{i}$, the induced group $\smash{G:=\langle \widehat{G}_1, \dots, \widehat{G}_n\rangle\le\Aut(\Gamma)}$ need not have finitely many vertex-orbits on $\Gamma$: For example, let $\Gamma_{i}$ $(i\in\{1,\ldots,n\})$ be finite and choose $G_i$ to be trivial. Then the free product is infinite but $G$ is trivial. However, we have the following result.

\begin{corollary}\label{cor:vertex_transitive}
Let $\Gamma_{1},\ldots,\Gamma_{n}$ be connected, vertex-transitive graphs. Then the free product $\Conv_{i=1}^n \Gamma_i$ is vertex-transitive as well.
\end{corollary}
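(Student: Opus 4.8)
The claim is that if each $\Gamma_i$ is vertex-transitive, then $\Gamma:=\Conv_{i=1}^n\Gamma_i$ is vertex-transitive. The plan is to combine the two sources of automorphisms already produced: the ``update-preserving'' automorphisms from Equation~\eqref{eq:intertwine} (which move $\emptyset$ to any vertex $\tilde v$ with $\mathbf u(\tilde v)=\mathbf u(\emptyset)$), and the automorphisms $\hat\alpha$ arising from $\alpha\in\Aut(\Gamma_j)$ in Proposition~\ref{prop:aut_intertwine} (which, via $\mathbf u\circ\hat\alpha=\overline\alpha\circ\mathbf u$, can adjust the $j$-th coordinate of $\mathbf u(\emptyset)$ by any element of $\Aut(\Gamma_j)$). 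Since each $\Gamma_j$ is vertex-transitive, $\Aut(\Gamma_j)$ acts transitively on $V(\Gamma_j)$, so by composing the $\hat\alpha$ over $j=1,\dots,n$ we can find an automorphism sending $\emptyset$ to some vertex whose update tuple is any prescribed element of $\prod_i V(\Gamma_i)$ — in particular, equal to $\mathbf u(\tilde v)$ for a given target vertex $\tilde v$.

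\textbf{Key steps, in order.} First I would fix an arbitrary vertex $\tilde v\in V(\Gamma)$ and write $\mathbf u(\tilde v)=(w_1,\dots,w_n)$; the goal is to build $\beta\in\Aut(\Gamma)$ with $\beta(\emptyset)=\tilde v$. Second, for each $j\in\{1,\dots,n\}$ choose $\alpha_j\in\Aut(\Gamma_j)$ with $\alpha_j(u_j)=w_j$ (possible by vertex-transitivity of $\Gamma_j$), and form $\hat\alpha_j\in\Aut(\Gamma)$ as in Proposition~\ref{prop:aut_intertwine}. Third, set $\beta_0:=\hat\alpha_n\circ\cdots\circ\hat\alpha_1$; using the intertwining relation $\mathbf u\circ\hat\alpha_j=\overline{\alpha_j}\circ\mathbf u$ and the fact that the $\overline{\alpha_j}$ act on distinct coordinates, one gets $\mathbf u\circ\beta_0=(\overline{\alpha_n}\circ\cdots\circ\overline{\alpha_1})\circ\mathbf u$, hence $\mathbf u(\beta_0(\emptyset))=(\alpha_1(u_1),\dots,\alpha_n(u_n))=(w_1,\dots,w_n)=\mathbf u(\tilde v)$. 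Fourth, apply the consequence of Corollary~\ref{cor:uniqueness} recorded in Equation~\eqref{eq:intertwine}: since $\beta_0(\emptyset)$ and $\tilde v$ have the same update tuple, there is $\gamma\in\Aut(\Gamma)$ with $\gamma(\beta_0(\emptyset))=\tilde v$ — more precisely, Equation~\eqref{eq:intertwine} gives an automorphism carrying one vertex with update tuple $\mathbf u(\emptyset)$ to any other; applying it after translating by $\beta_0^{-1}$ (or directly to the pair $\beta_0(\emptyset),\tilde v$, both of which have the same $\mathbf u$-value, noting the argument of Corollary~\ref{cor:uniqueness} works from any base vertex) produces the required $\gamma$. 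Then $\beta:=\gamma\circ\beta_0$ sends $\emptyset$ to $\tilde v$, proving vertex-transitivity.

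\textbf{Main obstacle.} The only delicate point is the bookkeeping in the third step: the definition of $\hat\alpha_j$ in Proposition~\ref{prop:aut_intertwine} is piecewise (it depends on whether $\alpha_j(u_j)=u_j$, whether the first letter of the word lies in $\Gamma_j$, etc.), so one must be careful that composing $\hat\alpha_1,\dots,\hat\alpha_n$ really does intertwine $\mathbf u$ with $\overline{\alpha_n}\circ\cdots\circ\overline{\alpha_1}$ rather than with some order-dependent or word-dependent variant. This is handled cleanly by not unpacking the definition at all and instead using only the stated identity $\mathbf u\circ\hat\alpha_j=\overline{\alpha_j}\circ\mathbf u$ together with the observation that the $\overline{\alpha_j}$, acting on pairwise distinct coordinates of $\prod_i V(\Gamma_i)$, commute — so the composite is independent of order and equals $\id\times\cdots\times\id$ on coordinates $\neq$ the relevant ones and $\alpha_j$ on coordinate $j$. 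A secondary (minor) point is confirming that Equation~\eqref{eq:intertwine}, as derived from the proof of Corollary~\ref{cor:uniqueness}, can be invoked with $\beta_0(\emptyset)$ in place of $\emptyset$ as the base vertex; this is immediate because that proof only used that the base vertex and target share the same update tuple, not that the base vertex is literally $\emptyset$.
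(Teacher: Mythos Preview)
Your proposal is correct, and it takes a genuinely different route from the paper's own proof, though both draw on the same ingredients.

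The paper argues geometrically: given $\tilde v,\tilde w\in V(\Gamma)$, it picks a path from $\tilde v$ to $\tilde w$ and lists its transition points $\tilde v=x_0,x_1,\dots,x_{l+1}=\tilde w$. Each consecutive pair $x_{i-1},x_i$ lies in a single $\Gamma_j$-sheet, so by re-initialising $\Gamma$ at $x_{i-1}$ (via Corollary~\ref{cor:uniqueness}) and applying Proposition~\ref{prop:aut_intertwine} together with vertex-transitivity of $\Gamma_j$, one obtains $\alpha_i\in\Aut(\Gamma)$ with $\alpha_i(x_{i-1})=x_i$; composing the $\alpha_i$ carries $\tilde v$ to $\tilde w$.

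Your argument is more algebraic and global: you first use one lift $\hat\alpha_j$ per factor to force the update tuple of the image of $\emptyset$ to agree with $\mathbf u(\tilde v)$, exploiting only the intertwining identity $\mathbf u\circ\hat\alpha_j=\overline{\alpha_j}\circ\mathbf u$; then you invoke Equation~\eqref{eq:intertwine} once to move between vertices with equal update tuple. Your handling of the ``minor point'' is sound: setting $\tilde w:=\beta_0^{-1}(\tilde v)$ gives $\mathbf u(\tilde w)=\overline{\alpha}^{\,-1}(\mathbf u(\tilde v))=\mathbf u(\emptyset)$, so Equation~\eqref{eq:intertwine} applies literally to produce $\gamma'$ with $\gamma'(\emptyset)=\tilde w$, and $\beta_0\circ\gamma'$ sends $\emptyset$ to $\tilde v$.

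The trade-off: the paper's path argument is conceptually transparent (walk through sheets one at a time) but uses re-initialisation at every step; your argument avoids repeated re-basing and makes the two-step structure---adjust the $\mathbf u$-value, then move within a fibre of $\mathbf u$---explicit, at the cost of relying on the somewhat opaque Equation~\eqref{eq:intertwine}. Both are equally valid and of comparable length.
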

\begin{proof}
Set $\Gamma:=\Conv_{i=1}^{n}\Gamma_{i}$. Let $\tilde{v},\tilde{w}\in V(\Gamma)$ and let $\gamma=(e_{1},\ldots,e_{k})$ be a path from $\tilde{v}$ to $\tilde{w}$ with transition points $x_{1},\ldots,x_{l}\in V(\Gamma)$. By choosing the base points $x_{0}:=\tilde{v},x_{1},\ldots,x_{l},x_{l+1}:=\tilde{w}$ for $\Gamma$, Proposition~\ref{prop:aut_intertwine} and vertex-transitivity of the $\Gamma_{i}$ ($i\in\{1,\ldots,n\}$) yield automorphisms $\alpha_{1},\ldots,\alpha_{l+1}\in\Aut(\Gamma)$ such that $\alpha_{i}(x_{i-1})\!=\!x_{i}$ ($i\in\{1,\ldots,l+1\}$). Then $\alpha_{l+1}\circ\cdots\circ\alpha_{1}$ maps $\tilde{v}$ to $\tilde{w}$.
\end{proof}

The maps $\Aut(\Gamma_{j})\to\Aut(\Gamma)$ of Proposition~\ref{prop:aut_intertwine} are topological embeddings.

\begin{proposition}
Let $\Gamma_{1},\ldots,\Gamma_{n}$ be connected graphs and initialise $\Gamma:=\Conv_{i=1}^n \Gamma_i$ with $\mathbf{u}(\emptyset)=(u_1,\dots, u_n)\in\prod_{i=1}^{n}V(\Gamma_{i})$. For every $j\in\{1,\ldots,n\}$, the injective homomorphism $\varphi_{j}:\Aut(\Gamma_{j})\to\Aut(\Gamma)$, $\alpha\mapsto\hat{\alpha}$ is a topological embedding.
\end{proposition}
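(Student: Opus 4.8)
The plan is to show that $\varphi_j$ is a homeomorphism onto its image by verifying it is continuous, open onto its image, and using that both groups carry the permutation topology. Recall that the permutation topology on $\Aut(\Gamma)$ has as a neighbourhood basis of the identity the pointwise stabilisers $\Aut(\Gamma)_F$ of finite subsets $F\subseteq V(\Gamma)$, and similarly for $\Aut(\Gamma_j)$. Since $\varphi_j$ is already known to be an injective homomorphism by Proposition~\ref{prop:aut_intertwine}, it suffices to produce, for each finite $F\subseteq V(\Gamma_j)$, a finite set $F'\subseteq V(\Gamma)$ with $\varphi_j(\Aut(\Gamma_j)_{F}) \supseteq \varphi_j(\Aut(\Gamma_j))\cap \Aut(\Gamma)_{F'}$ (openness of the image map) and, conversely, for each finite $F'\subseteq V(\Gamma)$ a finite set $F\subseteq V(\Gamma_j)$ with $\varphi_j(\Aut(\Gamma_j)_F)\subseteq\Aut(\Gamma)_{F'}$ (continuity).

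For continuity, given a finite $F'\subseteq V(\Gamma)$, I would take $F:=\{\,u_h(\tilde v)\mid \tilde v\in F',\ h\in\{1,\ldots,n\}\,\}\subseteq\bigsqcup_i V(\Gamma_i)$, intersected with $V(\Gamma_j)$, together with the extra point $u_j$. Inspecting the formula for $\hat\alpha$ in Proposition~\ref{prop:aut_intertwine}, the value $\hat\alpha(\tilde v)$ depends only on how $\alpha$ acts on the letters of $\tilde v$ lying in $V(\Gamma_j)$ and on the image $\alpha(u_j)$; each such letter is of the form $u_j(\tilde w)$ for an appropriate prefix $\tilde w$ of $\tilde v$, hence lies in this finite set $F$. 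Thus if $\alpha$ fixes $F$ pointwise then $\hat\alpha$ fixes $F'$ pointwise, giving $\varphi_j(\Aut(\Gamma_j)_F)\subseteq\Aut(\Gamma)_{F'}$.

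For openness onto the image, I would fix a finite $F\subseteq V(\Gamma_j)$ and realise each $v\in F$ as $u_j(\tilde v_v)$ for some vertex $\tilde v_v\in V(\Gamma)$ lying in the $\Gamma_j$-sheet $S_{(\emptyset,j)}$ at the empty word — concretely, $\tilde v_v$ is the length-one word $v$ when $v\neq u_j$, and $\emptyset$ when $v=u_j$. Set $F':=\{\tilde v_v\mid v\in F\}$. If $\hat\alpha=\varphi_j(\alpha)$ fixes $F'$ pointwise, then since $\hat\alpha$ restricted to the $\Gamma_j$-sheet at $\emptyset$ acts exactly like $\alpha$ (via the identification of that sheet with $\Gamma_j$ sending $\emptyset\mapsto u_j$), it follows that $\alpha$ fixes each $v\in F$, so $\alpha\in\Aut(\Gamma_j)_F$; hence $\varphi_j(\Aut(\Gamma_j))\cap\Aut(\Gamma)_{F'}\subseteq\varphi_j(\Aut(\Gamma_j)_F)$. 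Combining the two inclusions, $\varphi_j$ and its inverse on the image are both continuous, so $\varphi_j$ is a topological embedding.

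The main obstacle is bookkeeping the case distinction in the definition of $\hat\alpha$ when $\alpha(u_j)\neq u_j$: one must check carefully that in each of the three branches the letters of $\hat\alpha(\tilde v)$ lying in $V(\Gamma_j)$ are still determined by $\alpha$ on the finite set $F$ described above (in particular that the possibly-inserted leading letter $\alpha(u_j)$ and the possibly-deleted leading letter are accounted for), and symmetrically that fixing the chosen finite set $F'$ of vertices really does force $\alpha(u_j)=u_j$ and $\alpha|_F=\id$. This is routine given Proposition~\ref{prop:aut_intertwine}, but it is the only place requiring genuine care.
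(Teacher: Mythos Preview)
Your approach---verifying continuity and openness onto the image via pointwise stabilisers of finite sets---is the same strategy as the paper's, which specialises to stabilisers of balls $U_n=\Aut(\Gamma)_{B_\Gamma(\emptyset,n)}$ and $V_n=\Aut(\Gamma_j)_{B_{\Gamma_j}(u_j,n)}$ and shows $\varphi_j^{-1}(U_n)=V_n$. Your openness argument is correct and matches the paper's.

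There is one slip in the continuity direction. Your set $F=\{u_j(\tilde v)\mid \tilde v\in F'\}\cup\{u_j\}$ records only the \emph{last} $\Gamma_j$-letter of each word $\tilde v\in F'$, whereas $\hat\alpha(\tilde v)$ depends on \emph{every} $\Gamma_j$-letter of $\tilde v$. You correctly observe that each such letter equals $u_j(\tilde w)$ for some prefix $\tilde w$ of $\tilde v$, but $\tilde w$ need not itself lie in $F'$, so the clause ``hence lies in this finite set $F$'' does not follow as written. The fix is immediate: first close $F'$ under taking prefixes (still finite), or equivalently let $F$ be the set of all $\Gamma_j$-letters occurring in the words of $F'$ together with $u_j$. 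The paper's use of balls sidesteps this bookkeeping: the distance formula for $\Gamma$ gives $d_{\Gamma_j}(u_j(x),u_j)\le d_\Gamma(x,\emptyset)$, and applying this to every prefix of a word $\tilde v\in B_\Gamma(\emptyset,n)$ shows that all its $\Gamma_j$-letters already lie in $B_{\Gamma_j}(u_j,n)$, so one inequality handles the whole argument.
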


\begin{proof}
Consider the identity neighbourhood bases $U_{n}:=\Aut(\Gamma)_{B_{\Gamma}(\emptyset,n)}$ $(n\in\bbN)$ and $\smash{V_{n}:=\Aut(\Gamma_{j})_{B_{\Gamma_{j}}(u_{j},n)}}$ ($n\in\bbN$) for $\Aut(\Gamma)$ and $\Aut(\Gamma_{j})$ respectively. We have $\smash{\varphi_{j}^{-1}(U_{n})=V_{n}}$: On the one hand, $\smash{\varphi_{j}^{-1}(U_{n})\subseteq V_{n}}$ because $\smash{B_{S_{(\emptyset,j)}}(\emptyset,n)\subseteq B_{\Gamma}(\emptyset,n)}$ and $\varphi_{j}(\alpha)$ restricts to $\alpha$ on $\smash{S_{(\emptyset,j)}}$. Conversely, $\smash{\varphi_{j}^{-1}(U_{n})\supseteq V_{n}}$ by definition of $\varphi_{j}$ and because $d_{\Gamma_{j}}(u_{j}(x),u_{j})\le d_{\Gamma}(x,\emptyset)$ for all $x\in V(\Gamma)$. Also, $\varphi_{j}(V_{n})=U_{n}\cap\varphi_{j}(\Aut(\Gamma_{j}))$, hence $\varphi_{j}$ is a homeomorphism onto its image.
\end{proof}

Automorphisms of Proposition~\ref{prop:aut_intertwine} that fix $\emptyset\!\in\! V(\Gamma)$ are also covered by the~following proposition, whose greater flexibility entails that $\Aut(\Gamma)$ is often non-discrete. The notation~$\lambda(\tilde{v})$ for a vertex $\tilde{v}\!=\!v_{1}\ldots v_{l}$ in a free product $\Conv_{i=1}^{n}\Gamma_{i}$ refers to~the index $i\!\in\!\{1,\ldots,n\}$ such that $v_l\!\in\! V(\Gamma_i)$. We leave $\lambda(\emptyset)$ undefined and set $\{\lambda(\emptyset)\}\!:=\!\emptyset$.

\begin{proposition}\label{prop:aut_non_intertwine}
Let $\Gamma_{1},\ldots,\Gamma_{n}$ be connected graphs and initialise $\Gamma:=\Conv_{i=1}^{n}\Gamma_{i}$ with $\mathbf{u}(\emptyset)=(u_{1},\ldots,u_{n})\!\in\!\prod_{i=1}^{n}V(\Gamma_{i})$. For every $\tilde{v}\!\in\! V(\Gamma)$ and $i\!\in\!\{1,\dots, n\}\setminus\{\lambda(\tilde{v})\}$ let $\beta_{(\tilde{v},i)}\in \Aut(\Gamma_i)$. Recursively define $\beta$ on $V(\Gamma)$ by setting $\beta(\emptyset):=\emptyset$ and, for $\tilde{v}\in V(\Gamma)\setminus\{\emptyset\}$ and $v_{l+1}\in V(\Gamma_i)$ with $i\in\{1,\dots, n\}\setminus\{\lambda(\tilde{v})\}$, 
\begin{equation}
\label{def:beta}
\beta(\tilde{v}v_{l+1}) = \beta(\tilde{v})\beta_{(\tilde{v},i)}(v_{l+1}).
\end{equation}
Then $\beta\!\in\! \Aut(\Gamma)$ if $\beta_{(\tilde{v},i)}(u_i(\tilde{v}))\!=\!u_i(\beta(\tilde{v}))$ for all $\tilde{v}\!\in\! V(\Gamma)$ and $i\!\in\!\{1,\dots,n\}\setminus\{\lambda(\tilde{v})\}$.
\end{proposition}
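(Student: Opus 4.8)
The plan is to prove, via inductions on the length of words in $V(\Gamma)=\bigcup_{l\in\bbN_{0}}V^{(l)}$, the following chain of assertions: $\beta$ is well-defined and preserves $\lambda$ and word length; $\beta$ carries each sheet of $\Gamma$ isomorphically onto a sheet; $\beta$ is injective; and $\beta(N_{\Gamma}(\tilde v))=N_{\Gamma}(\beta(\tilde v))$ for every $\tilde v$. Lemma~\ref{lem:quotientmap} then gives that $\beta$ is surjective, and a bijective graph morphism with this last neighbourhood property automatically has a morphism as its inverse, so $\beta\in\Aut(\Gamma)$. The one genuine nuisance is bookkeeping: the $\Gamma_{\lambda(\tilde v)}$-sheet through a non-empty word $\tilde v$ is anchored not at $\tilde v$ itself but at its length-predecessor $\tilde w$ (so that $\tilde v=\tilde w v_{l}$), which means the recursion~\eqref{def:beta} and the hypothesis have to be invoked at $\tilde w$ in that case; keeping straight which vertex is the root of a given sheet, and that $\beta$ sends it to the root of the image sheet, is where care is needed.

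For well-definedness I would induct on $l$, showing simultaneously that $\beta(\tilde v)\in V(\Gamma)$, that $|\beta(\tilde v)|=|\tilde v|$, and that $\lambda(\beta(\tilde v))=\lambda(\tilde v)$ when $\tilde v\neq\emptyset$. The base case $l=0$ is immediate. For the step, write $\tilde v v_{l+1}$ with $v_{l+1}\in V(\Gamma_{i})$ and $i\neq\lambda(\tilde v)$; then $\beta(\tilde v v_{l+1})=\beta(\tilde v)\beta_{(\tilde v,i)}(v_{l+1})$ ends in a letter of $V(\Gamma_{i})$, so its $\lambda$-value is $i$. Since $\lambda(\beta(\tilde v))=\lambda(\tilde v)\neq i$ (or $\beta(\tilde v)=\emptyset$), consecutive letters of $\beta(\tilde v v_{l+1})$ lie in distinct factors; and since $\beta_{(\tilde v,i)}$ is a bijection with $\beta_{(\tilde v,i)}(u_{i}(\tilde v))=u_{i}(\beta(\tilde v))$ by hypothesis while $v_{l+1}\neq u_{i}(\tilde v)$, we get $\beta_{(\tilde v,i)}(v_{l+1})\neq u_{i}(\beta(\tilde v))$, so $\beta(\tilde v v_{l+1})$ satisfies~\eqref{eq:admissible}. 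Because the hypothesis enters only at words of length $\le l$, this induction also resolves the apparent circularity in the statement of the hypothesis.

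Next I would record that every sheet of $\Gamma$ has a unique vertex $\tilde x$ of minimal length, is then equal to $S_{(\tilde x,i)}$ for a unique $i\neq\lambda(\tilde x)$, and that under the identification $S_{(\tilde x,i)}\cong\Gamma_{i}$ sending $\tilde x\mapsto u_{i}(\tilde x)$ and $\tilde x v\mapsto v$ the recursion reads $\beta(\tilde x v)=\beta(\tilde x)\beta_{(\tilde x,i)}(v)$. Hence, using $\beta_{(\tilde x,i)}(u_{i}(\tilde x))=u_{i}(\beta(\tilde x))$ to see that $\beta$ sends $\tilde x$ to the root of $S_{(\beta(\tilde x),i)}$ and sweeps out all of that sheet, the restriction of $\beta$ to $S_{(\tilde x,i)}$ is, under the two natural identifications with $\Gamma_{i}$, simply $\beta_{(\tilde x,i)}\in\Aut(\Gamma_{i})$, hence a graph isomorphism onto $S_{(\beta(\tilde x),i)}$. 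Since every edge of $\Gamma$ lies in a unique sheet by Remark~\ref{rem:def_obs}\ref{item:sheet_colouring}, this already shows $\beta$ is a graph morphism.

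Injectivity I would obtain from a length induction: if $\beta(\tilde v' a)=\beta(\tilde w' b)$ with $a\in V(\Gamma_{i})$ and $b\in V(\Gamma_{j})$, comparing the length-$(l-1)$ prefixes forces $\beta(\tilde v')=\beta(\tilde w')$, hence $\tilde v'=\tilde w'$ by induction, while comparing last letters forces $i=j$ and then $a=b$ since $\beta_{(\tilde v',i)}=\beta_{(\tilde w',i)}$ is injective. For surjectivity, by Remark~\ref{rem:def_obs}\ref{item:valency} the set $N_{\Gamma}(\tilde v)$ is the disjoint union over $i\in\{1,\ldots,n\}$ of the neighbours of $\tilde v$ inside the $\Gamma_{i}$-sheet through $\tilde v$; the previous paragraph, together with the fact that the $\beta_{(\cdot,i)}$ governing that sheet is a graph automorphism of $\Gamma_{i}$ carrying $u_{i}(\tilde v)$ to $u_{i}(\beta(\tilde v))$ — the hypothesis when $i\neq\lambda(\tilde v)$, and a direct consequence of~\eqref{def:beta} and~\eqref{def:update} when $i=\lambda(\tilde v)$ — shows that $\beta$ maps each such piece bijectively onto the corresponding piece of $N_{\Gamma}(\beta(\tilde v))$, whence $\beta(N_{\Gamma}(\tilde v))=N_{\Gamma}(\beta(\tilde v))$. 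Lemma~\ref{lem:quotientmap} now yields that $\beta$ is onto, and for a bijective morphism satisfying $\beta(N_{\Gamma}(\tilde v))=N_{\Gamma}(\beta(\tilde v))$ for all $\tilde v$ the inverse map is again a morphism, so $\beta\in\Aut(\Gamma)$.
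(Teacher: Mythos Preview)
Your argument is correct and its core matches the paper's: verify that $\beta(\tilde v)$ is admissible using the hypothesis $\beta_{(\tilde v,i)}(u_i(\tilde v))=u_i(\beta(\tilde v))$ together with bijectivity of $\beta_{(\tilde v,i)}$, then check edge preservation sheet by sheet (the paper does this via a two-case split on whether an edge of $S_{(\tilde v,i)}$ is incident on the anchor~$\tilde v$, which is exactly your observation that $\beta$ restricted to $S_{(\tilde x,i)}$ is $\beta_{(\tilde x,i)}\in\Aut(\Gamma_i)$ under the natural identifications). Where you go further is in treating bijectivity explicitly: the paper's proof stops after showing $\beta$ is a well-defined edge-preserving map and never addresses injectivity or surjectivity. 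Your length-induction for injectivity and your use of the neighbourhood identity $\beta(N_\Gamma(\tilde v))=N_\Gamma(\beta(\tilde v))$ together with Lemma~\ref{lem:quotientmap} thus supply detail the paper omits; the distinction you draw between the cases $i\neq\lambda(\tilde v)$ (where the hypothesis applies) and $i=\lambda(\tilde v)$ (where the needed identity follows directly from~\eqref{def:beta} and~\eqref{def:update}) is exactly the right bookkeeping.
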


\begin{proof}
First we show that $\beta(\tilde{v})\in V(\Gamma)$. Since $\beta_{(\tilde{v},i)}(v_{l+1})\in V(\Gamma_i)$, the first half of Equation~\eqref{eq:admissible} is satisfied. The condition that $v_{l+1}\ne u_i(\tilde{v})$ is preserved as well: Since $\beta_{(\tilde{v},i)}(u_i(\tilde{v})) = u_i(\beta(\tilde{v}))$ and $\beta_{(\tilde{v},i)}$ is a bijection, we have $\beta_{(\tilde{v},i)}(v_{l+1}) \ne  u_i(\beta(\tilde{v}))$. 

Next, we show that $\beta$ preserves all edge relations. Every edge $e$ of $\Gamma$ belongs to a unique $\Gamma_i$-sheet $S_{(\tilde{v},i)}$ for a unique~$i$. If $e$ is not incident on $\tilde{v}$, then $e = \{\tilde{v}v_{l+1}, \tilde{v}v'_{l+1}\}$ for some $\{v_{l+1}$, $v'_{l+1}\}\in E(\Gamma_i)$ and $\beta(e) = \{\beta(\tilde{v})\beta_{(\tilde{v},i)}(v_{l+1}), \beta(\tilde{v})\beta_{(\tilde{v},i)}(v'_{l+1})\}$ is an edge in the $\Gamma_i$-sheet $S_{(\beta(\tilde{v}),i)}$ because $\beta_{(\tilde{v},i)}$ is an automorphism of $\Gamma_i$. If $e$ is incident on $\tilde{v}$, then $e = \{\tilde{v},\tilde{v}v_{l+1}\}$ for some $v_{l+1}\in V(\Gamma_{i})$ such that $\{u_i(\tilde{v}),v_{l+1}\}$ in $E(\Gamma_i)$. Then $\beta(e) = \{\beta(\tilde{v}),\beta(\tilde{v})\beta_{(\tilde{v},i)}(v_{l+1})\}$ is an edge in the $\Gamma_i$-sheet $S_{(\beta(\tilde{v}),i)}$ because $\{u_i(\beta(\tilde{v})),\beta_{(\tilde{v},i)}(v_{l+1})\}\in E(\Gamma_{i})$.
\end{proof}

\begin{corollary}\label{cor:aut_non_discrete_stabiliser}
Let $\Gamma_{1},\ldots,\Gamma_{n}$ $(n\!\ge\! 2)$ be connected graphs and $\Gamma:=\Conv_{i=1}^{n}\Gamma_{i}$. If $\Aut(\Gamma_{j})_{u}\neq\{\id\}$ for some $j\!\in\!\{1,\ldots,n\}$ and $u\!\in\! V(\Gamma_{j})$ then $\Aut(\Gamma)$ is non-discrete.
\end{corollary}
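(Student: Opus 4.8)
The plan is to apply Proposition~\ref{prop:aut_non_intertwine} to produce, for every $R\in\bbN$, a non-trivial automorphism of $\Gamma$ that fixes the ball $B_{\Gamma}(\emptyset,R)$ pointwise; since $\Gamma$ is connected, the stabilisers of such balls form a neighbourhood basis of the identity in $\Aut(\Gamma)$, so this shows $\Aut(\Gamma)$ is non-discrete. Fix $\alpha\in\Aut(\Gamma_{j})\setminus\{\id\}$ with $\alpha(u)=u$ and choose $v^{*}\in V(\Gamma_{j})\setminus\{u\}$ with $\alpha(v^{*})\ne v^{*}$; such $v^{*}$ exists because $\alpha$ is non-trivial and fixes $u$. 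By Corollary~\ref{cor:uniqueness} we may initialise $\Gamma:=\Conv_{i=1}^{n}\Gamma_{i}$ with $\mathbf{u}(\emptyset)=(u_{1},\ldots,u_{n})$ where $u_{j}=u$.

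The first step is to locate, for a given $R$, an admissible word $\tilde{w}\in V(\Gamma)$ with $\lambda(\tilde{w})\ne j$, with $u_{j}(\tilde{w})=u$, and with $d(\emptyset,\tilde{w})>R$. Since $n\ge 2$, fix $i\ne j$; both $\Gamma_{i}$ and $\Gamma_{j}$ have at least two vertices by the standing assumption. One builds a long admissible word $\tilde{w}$ whose letters alternate between $V(\Gamma_{i})$ and $V(\Gamma_{j})$, which begins and ends with a letter in $V(\Gamma_{i})$, and whose last letter lying in $V(\Gamma_{j})$ equals $u$: the only constraints to respect are that two successive letters from the same factor be distinct and that the first $V(\Gamma_{j})$-letter differs from $u=u_{j}(\emptyset)$, which is easily arranged (taking an even number of $V(\Gamma_{j})$-letters if $|V(\Gamma_{j})|=2$). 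Then $\lambda(\tilde{w})=i\ne j$ and $u_{j}(\tilde{w})=u$. Moreover every summand of the distance formula $d(\emptyset,\tilde{w})=\sum_{k}d_{\Gamma_{i_{k}}}(u_{i_{k}}(v_{1}\ldots v_{k-1}),v_{k})$ is at least $1$, because admissibility forces $v_{k}\ne u_{i_{k}}(v_{1}\ldots v_{k-1})$, so $d(\emptyset,\tilde{w})\ge|\tilde{w}|$, and we choose $|\tilde{w}|>R$.

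Next, define $\beta$ via Proposition~\ref{prop:aut_non_intertwine} by setting $\beta_{(\tilde{v},k)}:=\alpha$ whenever $k=j$, $k\ne\lambda(\tilde{v})$, and either $\tilde{v}=\tilde{w}$ or $\tilde{w}$ is a proper prefix of $\tilde{v}$ whose immediately following letter lies in $V(\Gamma_{j})$, and $\beta_{(\tilde{v},k)}:=\id$ otherwise. Tracing the recursion~\eqref{def:beta} shows $\beta(\tilde{v})=\tilde{v}$ unless $\tilde{w}$ is a proper prefix of $\tilde{v}=\tilde{w}v_{m+1}\ldots v_{l}$ with $v_{m+1}\in V(\Gamma_{j})$, in which case $\beta(\tilde{v})$ is obtained from $\tilde{v}$ by applying $\alpha$ to every letter lying in $V(\Gamma_{j})$ occurring after the prefix $\tilde{w}$ (in particular $\beta$ acts on the $\Gamma_{j}$-sheet at $\tilde{w}$ exactly as $\alpha$ does on $\Gamma_{j}$, fixing $\tilde{w}$ since $\alpha(u)=u$). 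One then verifies the hypothesis $\beta_{(\tilde{v},k)}(u_{k}(\tilde{v}))=u_{k}(\beta(\tilde{v}))$ of Proposition~\ref{prop:aut_non_intertwine}: for $k\ne j$ it is immediate since $\beta$ alters only $V(\Gamma_{j})$-letters and hence does not change $u_{k}$, and for $k=j$ it reduces to the fact that $\alpha$ commutes with reading off the last $V(\Gamma_{j})$-letter. The delicate case is the first $V(\Gamma_{j})$-letter $v_{m+1}$ after the prefix, where one needs $\alpha(v_{m+1})\ne u_{j}(\beta(\tilde{w}))=u_{j}(\tilde{w})$; here the choice $u_{j}(\tilde{w})=u=\alpha(u)$ together with $v_{m+1}\ne u$ is what makes it work, i.e.\ $\alpha$ is ``attached at its fixed point''. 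This verification is the main obstacle of the argument; granting it, $\beta\in\Aut(\Gamma)$.

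Finally, $\beta$ fixes $B_{\Gamma}(\emptyset,R)$ pointwise: if $\tilde{w}$ were a proper prefix of some $\tilde{v}$, the distance formula gives $d(\emptyset,\tilde{v})\ge d(\emptyset,\tilde{w})>R$ (it is monotone along prefixes), so no vertex of $B_{\Gamma}(\emptyset,R)$ has $\tilde{w}$ as a proper prefix and $\beta$ is the identity on it. On the other hand $\tilde{w}v^{*}\in V(\Gamma)$ because $v^{*}\ne u=u_{j}(\tilde{w})$, and $\beta(\tilde{w}v^{*})=\tilde{w}\alpha(v^{*})\ne\tilde{w}v^{*}$, so $\beta\ne\id$. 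Hence $\Aut(\Gamma)_{B_{\Gamma}(\emptyset,R)}\ne\{\id\}$ for every $R\in\bbN$, and $\Aut(\Gamma)$ is non-discrete.
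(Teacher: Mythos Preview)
Your proof is correct and follows the same strategy as the paper's: both invoke Proposition~\ref{prop:aut_non_intertwine} with local data $\alpha$ supported past a far-away vertex $\tilde{w}$ satisfying $\lambda(\tilde{w})\neq j$ and $u_j(\tilde{w})=u$, and both use $\alpha(u)=u$ to verify the compatibility condition at $\tilde{w}$. The only cosmetic differences are that the paper sets $\beta_{(\tilde{v},j)}=\alpha$ for \emph{every} $\tilde{v}$ having $\tilde{w}$ as a prefix (slightly simpler than your extra condition on the letter following $\tilde{w}$), and that you construct $\tilde{w}$ explicitly whereas the paper merely asserts that such vertices exist at arbitrary distance.
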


\begin{proof}
Initalise $\Gamma$ with $\mathbf{u}(\emptyset)=(u_{1},\ldots,u_{n})\!\in\!\prod_{i=1}^{n}V(\Gamma_{i})$. By Corollary~\ref{cor:uniqueness} we may assume $u_{j}=u$. It suffices to show that the stabiliser $\Aut(\Gamma)_{B(\emptyset,k)}$ of the ball $B(\emptyset,k)$ of radius $k$ around $\emptyset\in V(\Gamma)$ is non-trivial for every $k\in\bbN$. Let $\beta_{0}\in\Aut(\Gamma_{j})_{u}$ be non-trivial. For every $\tilde{v}_{0}\in V(\Gamma)$ with $\lambda(\tilde{v}_{0})\neq j$ and $u_{j}(\tilde{v}_{0})=u$ define $\smash{\beta^{(\tilde{v}_{0})}}$ as in Proposition~\ref{prop:aut_non_intertwine} by setting
\begin{displaymath}
 \beta^{(\tilde{v}_{0})}_{(\tilde{v},i)}:=\begin{cases}\beta_{0} & \text{if $\tilde{v}_{0}$ is a prefix of $\tilde{v}$ and $i=j$} \\ \id & \text{otherwise}\end{cases}.
\end{displaymath}
Then $\beta^{(\tilde{v}_{0})}\in\Aut(\Gamma)$ by Proposition~\ref{prop:aut_non_intertwine}: To begin, we have
\begin{displaymath}
  \beta^{(\tilde{v}_{0})}_{(\tilde{v}_{0},j)}(u_{j}(\tilde{v}_{0}))=\beta_{0}(u)=u=u_{j}(\tilde{v}_{0})=u_{j}(\beta^{(\tilde{v}_{0})}(\tilde{v}_{0})).
\end{displaymath}
Further, if $\tilde{v}_{0}=v_{1}\ldots v_{l}$ is a prefix of $\tilde{v}=v_{1}\ldots v_{m}$ with $\lambda(\tilde{v})\neq j$, and the last letter from $\Gamma_{j}$ in the word $\tilde{v}$ has index $k>l$ then $\beta^{(\tilde{v}_{0})}_{(\tilde{v},j)}(u_{j}(\tilde{v}))=\beta_{0}(v_{k})=u_{j}(\beta^{(\tilde{v}_{0})}(\tilde{v}))$. In all other cases, we have $\beta^{(\tilde{v}_{0})}_{(\tilde{v},i)}(u_{i}(\tilde{v}))=u_{i}(\beta^{(\tilde{v}_{0})}(\tilde{v}))$ for all $i\in\{1,\ldots,n\}\backslash\lambda(\tilde{v})$ because neither $\beta^{(\tilde{v}_{0})}$ nor $\beta^{(\tilde{v}_{0})}_{(\tilde{v},i)}$ impact the $i$-th coordinate of $\mathbf{u}$. Since there are choices for $\tilde{v}_{0}$ with $d(\emptyset,\tilde{v}_{0})$ arbitrarily large, resulting in automorphisms $\beta^{(\tilde{v}_{0})}$ that stabilise $B(\emptyset,d(\emptyset,\tilde{v}_{0}))$ pointwise, the assertion follows
\end{proof}

In a similar fashion, we see that $\Aut(\Gamma)$ is also non-discrete when $\Gamma$ has at least three factors, two of which are isomorphic and vertex-transitive. In contrast to Corollary~\ref{cor:aut_non_discrete_stabiliser}, this applies to graphs with a regular automorphism group. Note that the assumption $n\ge 3$ cannot be weakened to $n\ge 2$ as the case $\Gamma:=T_{1}\ast T_{1}$ shows.

\begin{proposition}\label{prop:aut_non_discrete_isomorphic}
Let $\Gamma_{1},\ldots,\Gamma_{n}$ $(n\ge 3)$ be connected graphs and $\Gamma:=\Conv_{i=1}^{n}\Gamma_{i}$. If two factors of $\Gamma$ are isomorphic then $\Aut(\Gamma)$ is non-discrete.
\end{proposition}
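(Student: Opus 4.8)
The plan is to exhibit, for any prescribed radius $k$, a non-trivial automorphism of $\Gamma$ fixing the ball $B(\emptyset,k)$ pointwise, using the freedom provided by Proposition~\ref{prop:aut_non_intertwine}. Without loss of generality suppose $\Gamma_{j_1}\cong\Gamma_{j_2}$ for distinct indices $j_1,j_2\in\{1,\ldots,n\}$, and fix a graph isomorphism $\theta:\Gamma_{j_1}\to\Gamma_{j_2}$. Since $n\ge 3$ there is a third index $j_3$. The idea is that deep inside the tree of sheets there are vertices $\tilde{v}$ with $\lambda(\tilde{v})=j_3$ (or any index $\neq j_1,j_2$) from which \emph{both} a $\Gamma_{j_1}$-sheet and a $\Gamma_{j_2}$-sheet emanate; locally around such a vertex one can swap these two sheets via $\theta$ and $\theta^{-1}$ while leaving everything else fixed. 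The presence of the third factor is what guarantees such a vertex exists arbitrarily far from $\emptyset$ (with $n=2$ the sheets strictly alternate and there is no vertex from which two sheets of the two different types both hang off in the required way — this is exactly the $T_1\ast T_1$ obstruction).

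Concretely, I would proceed as follows. First, by Corollary~\ref{cor:uniqueness} initialise $\Gamma$ with any convenient $\mathbf{u}(\emptyset)$. Pick a vertex $\tilde{v}_0=v_1\ldots v_l\in V(\Gamma)$ with $\lambda(\tilde{v}_0)=j_3$ and $d(\emptyset,\tilde{v}_0)>k$; such vertices exist for arbitrarily large $k$ because $\Gamma$ is connected and infinite (no factor is a single vertex). Now define data for Proposition~\ref{prop:aut_non_intertwine}: set $\beta_{(\tilde{w},i)}:=\id$ for all pairs $(\tilde{w},i)$ \emph{except} that, for every $\tilde{w}$ having $\tilde{v}_0$ as a prefix, we toggle the roles of $j_1$ and $j_2$ — that is, when the sheet being appended is a $\Gamma_{j_1}$-sheet we instead want to append (an isomorphic copy of) a $\Gamma_{j_2}$-sheet, and vice versa, transported by $\theta^{\pm 1}$. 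Making this precise requires a little care because the alphabet of $\Gamma$ is fixed; the cleanest route is to define $\beta$ directly on the subtree of sheets hanging off $\tilde{v}_0$ by the recursion: $\beta(\tilde{v}_0):=\tilde{v}_0$, and for an admissible extension $\tilde{w}v_{m+1}$ with $\tilde{v}_0$ a prefix of $\tilde{w}$, replace $v_{m+1}\in V_{j_1}$ by $\theta(v_{m+1})\in V_{j_2}$, replace $v_{m+1}\in V_{j_2}$ by $\theta^{-1}(v_{m+1})\in V_{j_1}$, and leave $v_{m+1}$ in any other factor unchanged — while $\beta$ is the identity on all vertices not having $\tilde{v}_0$ as a prefix. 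One then checks admissibility is preserved (consecutive letters still lie in distinct factors: swapping $j_1\leftrightarrow j_2$ at the first sheet after $\tilde{v}_0$ is harmless since $\lambda(\tilde{v}_0)=j_3\notin\{j_1,j_2\}$; deeper down, $\theta$ being a bijection preserves the ``$\neq u$'' conditions) and that the sheet-adjacency relations go across correctly, exactly as in the proof of Proposition~\ref{prop:aut_non_intertwine}: edges within a swapped $\Gamma_{j_1}$-sheet map to edges of a $\Gamma_{j_2}$-sheet via $\theta$, and the hinge edge at a transition vertex is preserved because $\theta$ and $\theta^{-1}$ intertwine the relevant update values.

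The resulting map $\beta$ is an automorphism of $\Gamma$: it is a bijection (its inverse is defined by the analogous recursion with $\theta$ and $\theta^{-1}$ interchanged) and it preserves edges by the sheet-by-sheet argument above. It fixes $B(\emptyset,k)$ pointwise since every vertex moved by $\beta$ has $\tilde{v}_0$ as a proper prefix and hence lies at distance $\geq d(\emptyset,\tilde{v}_0)>k$ from $\emptyset$. Finally $\beta$ is non-trivial: because $\Gamma_{j_1}$ has at least two vertices (no factor is a single vertex), the $\Gamma_{j_1}$-sheet hanging off $\tilde{v}_0$ contains a vertex $\tilde{v}_0 v$ with $v\in V_{j_1}\setminus\{u_{j_1}(\tilde{v}_0)\}$, and $\beta(\tilde{v}_0 v)=\tilde{v}_0\,\theta(v)\in V_{j_2}$-letter differs from $\tilde{v}_0 v$ as words, so $\beta(\tilde{v}_0 v)\neq \tilde{v}_0 v$. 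Letting $k\to\infty$ shows every identity-neighbourhood $\Aut(\Gamma)_{B(\emptyset,k)}$ is non-trivial, so $\Aut(\Gamma)$ is non-discrete.

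The main obstacle I anticipate is purely bookkeeping: phrasing the ``swap two sheets by $\theta$'' operation cleanly at the level of admissible words, and verifying that after the swap the word is still admissible and that the recursion of Proposition~\ref{prop:aut_non_intertwine} (with its hypothesis $\beta_{(\tilde{v},i)}(u_i(\tilde{v}))=u_i(\beta(\tilde{v}))$) is genuinely satisfied — in particular one must track how the update-function coordinates for indices $j_1$ and $j_2$ get interchanged along the subtree, and confirm this is consistent with $\theta$. Everything else (non-triviality, locality, the edge-preservation casework) is a direct transcription of the arguments already used for Corollary~\ref{cor:aut_non_discrete_stabiliser}.
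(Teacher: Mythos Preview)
Your approach is essentially the paper's: pick a deep vertex $\tilde{v}_0$ with $\lambda(\tilde{v}_0)\notin\{j_1,j_2\}$ and swap the $\Gamma_{j_1}$- and $\Gamma_{j_2}$-branches hanging off it via $\theta^{\pm 1}$. The paper does not route this through Proposition~\ref{prop:aut_non_intertwine} either (as you noticed, the local maps are isomorphisms between \emph{different} factors, not automorphisms of a single $\Gamma_i$), but defines $\beta$ directly by the same recursion you wrote down.

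There is, however, one genuine gap. You require only $\lambda(\tilde{v}_0)=j_3$ and $d(\emptyset,\tilde{v}_0)>k$, and then assert that ``$\theta$ being a bijection preserves the `$\neq u$' conditions''. This is false in general: for $v\in V_{j_1}\setminus\{u_{j_1}(\tilde{v}_0)\}$ the word $\tilde{v}_0 v$ is admissible, but $\beta(\tilde{v}_0 v)=\tilde{v}_0\,\theta(v)$ is admissible only when $\theta(v)\neq u_{j_2}(\tilde{v}_0)$, and $\theta$ carries $V_{j_1}\setminus\{u_{j_1}(\tilde{v}_0)\}$ onto $V_{j_2}\setminus\{\theta(u_{j_1}(\tilde{v}_0))\}$, not onto $V_{j_2}\setminus\{u_{j_2}(\tilde{v}_0)\}$. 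So unless you impose the compatibility condition
\[
\theta\bigl(u_{j_1}(\tilde{v}_0)\bigr)=u_{j_2}(\tilde{v}_0),
\]
the map $\beta$ sends some admissible words to inadmissible ones and misses others, hence is neither well-defined on $V(\Gamma)$ nor bijective. The paper states exactly this condition on $\tilde{v}_0$. Once imposed, the verification that $u_{j_2}(\beta(\tilde{w}))=\theta(u_{j_1}(\tilde{w}))$ and $u_{j_1}(\beta(\tilde{w}))=\theta^{-1}(u_{j_2}(\tilde{w}))$ propagate along the subtree is the routine bookkeeping you anticipated. You also need to argue that vertices $\tilde{v}_0$ satisfying both $\lambda(\tilde{v}_0)=j_3$ and the compatibility condition exist at arbitrarily large distance from $\emptyset$; this is not automatic from connectedness alone, but follows once you initialise with $\theta(u_{j_1})=u_{j_2}$ (as your invocation of Corollary~\ref{cor:uniqueness} permits) and build suitable words explicitly.
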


\begin{proof}
We show that the stabiliser $\Aut(\Gamma)_{B(\emptyset,l)}$ of the ball $B(\emptyset,l)$ of radius $l$ around $\emptyset\in V(\Gamma)$ is non-trivial for every $l\in\bbN$. Let $\varphi:\Gamma_{i}\to\Gamma_{j}$ be an isomorphism for some distinct $i,j\in\{1,\ldots,n\}$ and fix $k\in\{1,\ldots,n\}\backslash\{i,j\}$. For every $\tilde{v}_{0}\in V(\Gamma)$ with $\lambda(\tilde{v}_{0})=k$ and $\varphi(u_{i}(\tilde{v}_{0}))=u_{j}(\tilde{v}_{0})$, define an automorphism $\smash{\beta^{(\tilde{v}_{0})}}\in\Aut(\Gamma)_{B(\emptyset,d(\emptyset,\tilde{v}_{0}))}$ as follows: Let $\tilde{v}=v_{1}\ldots v_{l}\in V(\Gamma)$. If $\tilde{v}_{0}$ is not a prefix of $\tilde{v}$, set $\beta^{(\tilde{v}_{0})}(\tilde{v})=\tilde{v}$. When $\tilde{v}_{0}$ is a prefix of $\tilde{v}$, we define $\beta^{(\tilde{v}_{0})}(\tilde{v})$ by induction on $r\ge 0$ where $\tilde{v}=\tilde{v}_{0}v_{m+1}\ldots v_{m+r}$, that is $l=l(\tilde{v})=l(\tilde{v}_{0})+r=m+r$. If $r=0$, put $\smash{\beta^{(\tilde{v}_{0})}(\tilde{v})=\tilde{v}}$. Now consider vertices of the form $\tilde{v}v_{m+(r+1)}$ where $l(\tilde{v})=m+r$. Set
\begin{displaymath}
 \beta^{(\tilde{v}_{0})}(\tilde{v}v_{m+(r+1)}):=\begin{cases}\beta^{(\tilde{v}_{0})}(\tilde{v})\varphi(v_{m+(r+1)}) & \text{if } v_{m+(r+1)}\in V(\Gamma_{i}) \\ \beta^{(\tilde{v}_{0})}(\tilde{v})\varphi^{-1}(v_{m+(r+1)}) & \text{if } v_{m+(r+1)}\in V(\Gamma_{j}) \\ \beta^{(\tilde{v}_{0})}(\tilde{v})v_{m+(r+1)} & \text{if } v_{m+(r+1)}\not\in V(\Gamma_{i})\sqcup V(\Gamma_{j})\end{cases},
\end{displaymath}
Then $\smash{\beta^{(\tilde{v}_{0})}}$ is a bijection on vertices, preserves both types of edges~\ref{eq:nonemptyneighbours1} and~\ref{eq:nonemptyneighbours2} and so does its inverse. Hence $\smash{\beta^{(\tilde{v}_{0})}}$ is an automorphism. Since there are choices for $\tilde{v}_{0}$ with $d(\emptyset,\tilde{v}_{0})$ arbitrarily large, resulting in automorphisms $\beta^{(\tilde{v}_{0})}$ that stabilise $B(\emptyset,d(\emptyset,\tilde{v}_{0}))$ pointwise, the assertion follows.
\end{proof}

As a consequence of Corollary~\ref{cor:aut_non_discrete_stabiliser} and Proposition~\ref{prop:aut_non_discrete_isomorphic}, the Cayley graph
\begin{displaymath}
\Gamma(\Conv_{i=1}^{n}G_{i},\sqcup_{i=1}^{n}S_{i})\cong\Conv_{i=1}^{n}\Gamma(G_{i},S_{i}) 
\end{displaymath}
of a free product of finitely generated groups $(G_{i},S_{i})$ $(i\in\{1,\ldots,n\})$ has non-discrete automorphism group whenever $\Aut(\Gamma(G_{i},S_{i}))\gneq G_{i}$ for some $i\in\{1,\ldots,n\}$, or $n\ge 3$ and $\Gamma(G_{i},S_{i})\cong \Gamma(G_{j},S_{j})$ for some distinct $i,j\in\{1,\ldots,n\}$.

The question of how $\Aut(\Gamma(G,S))$ relates to its subgroup $G$ is frequently studied, see {\it e.g.\/} \cite{W76}, \cite{DSV16} and \cite{LS18}. However, the focus often lies on minimising the index $[\Aut(\Gamma(G,S)):G]$. In the trivial case, the graph $\Gamma(G,S)$ is called a \emph{graphical regular representation} of $G$. For example, in the case of finite groups there is a complete classification of groups $G$ which admit a symmetric generating set $S$ such that $\Aut(\Gamma(G,S))=G$, see \cite{IW75} and \cite{G78}. From a t.d.l.c. group perspective we are interested in non-discrete automorphism groups of Cayley graphs and therefore ask the following.

\begin{question}\label{qu:cayley_graphs}
Does every finitely generated group $G$, other than $C_{2}$, admit a finite symmetric generating set $S$ such that $\Aut(\Gamma(G,S))\gneq G$?
\end{question}

Note that if a generator in $S$ is repeated then $\Aut(\Gamma(G,S))\gneq G$. Hence we suppose that the generators in Question~\ref{qu:cayley_graphs} are distinct.

In case of a positive answer to Question~\ref{qu:cayley_graphs}, we conclude that every free product of finitely generated groups admits a locally finite Cayley graph with non-discrete automorphism group: Free products other than $C_{2}\ast C_{2}$ are covered by the above discussion, whereas for $C_{2}\ast C_{2}\cong\langle a,b\mid a^{2},b^{2}\rangle$ the Cayley graph with respect to the finite symmetric generating set $\{a,a^{-1},b,b^{-1},ab,(ab)^{-1},bab,(bab)^{-1}\}$ has non-discrete automorphism group.

\section{Sheet-preserving automorphisms}\label{sec:aut_s}

Let $\Gamma_{1},\ldots,\Gamma_{n}$ be connected graphs and $\Gamma:=\Conv_{i=1}^{n}\Gamma_{i}$. We define two subgroups of $\Aut(\Gamma)$ that preserve the set of sheets $\calS:=\{S_{(v,i)}\mid v\in V(\Gamma),\ i\in\{1,\ldots,n\}\}$. Recall that the colour $c(S)\in\{1,\ldots,n\}$ of a sheet $S\in\calS$ is well-defined. We set
\begin{align*}
 \Aut_{\calS}(\Gamma)&:=\{\alpha\in\Aut(\Gamma)\mid \forall S\in\calS:\ \alpha(S)\in\calS\}, \\
 \Aut_{\calS,c}(\Gamma)&:=\{\alpha\in\Aut(\Gamma)\mid \forall S\in\calS:\ \alpha(S)\in\calS \text{ and } c(\alpha(S))=c(S)\}. 
\end{align*}
Note that $\Aut_{\calS,c}(\Gamma)\le\Aut_{\calS}(\Gamma)\le\Aut(\Gamma)$. If the factors of $\Gamma$ are $2$-connected then the set $\calS$ coincides with the maximal $2$-connected subgraphs of $\Gamma$, also known as ``blocks'', or ``lobes'', see e.g. \cite{JW77} and \cite{GW19}. In this case, $\Aut_{\calS}(\Gamma)=\Aut(\Gamma)$. If, in addition, the factors of $\Gamma$ are pairwise non-isomorphic we conclude that $\Aut_{\calS,c}(\Gamma)=\Aut_{\calS}(\Gamma)=\Aut(\Gamma)$.

\begin{example}\label{ex:sheet_subgroups}
The following examples illustrate the non-$2$-connected case.
\begin{enumerate}[(i)]
 \item Let $\Gamma:=T_{1}\ast T_{1}$. Then $\Gamma\cong T_{2}$ and $\Aut_{\calS}(\Gamma)=\Aut(\Gamma)=D_{\infty}$. Furthermore, $[\Aut_{\calS}(\Gamma):\Aut_{\calS,c}(\Gamma)]=[\Aut(\Gamma):\Aut_{\calS,c}(\Gamma)]=2$.
 \item\label{item:regular_tree} Let $\Gamma:=\Conv_{i=1}^{d}T_{1}\cong T_{d}$. Then $\Aut_{\calS}(\Gamma)=\Aut(\Gamma)$. Following the notation of Burger--Mozes \cite[Section 3.2]{BM00a} and Tornier \cite[Definition 3.1]{Tor20}, we have $\smash{\Aut_{\calS,c}(\Gamma)=\mathrm{U}_{1}^{(c)}(\{\id\})}$, which is discrete. In particular, we obtain $[\Aut_{\calS}(\Gamma):\Aut_{\calS,c}(\Gamma)]=[\Aut(\Gamma):\Aut_{\calS,c}(\Gamma)]=\infty$.
 \item\label{item:universal_transposition} Let $\Gamma:=T_{1}\ast T_{2}\cong T_{3}$. In this case, $\smash{\Aut_{\calS,c}(\Gamma)=\Aut_{\calS}(\Gamma)=\mathrm{U}_{1}^{(l)}(\langle\tau\rangle)}$ where $\tau:=(12)\in S_{3}$ and $l$ is the legal labelling of $T_{3}$ arising from the labellings $\pspicture(-0.1,-0.075)(1.1,0)\psdots(0,0)(1,0)\psline(0,0)(1,0)\rput(0.5,0.15){\scriptsize{$3$}}\endpspicture$ and $\pspicture(-0.6,-0.075)(3.7,0)\psdots(0,0)(1,0)(2,0)(3,0)\psline(0,0)(1,0)\psline(1,0)(2,0)\psline(2,0)(3,0)\uput[l](0,0){$\cdots$}\uput[r](3,0){$\cdots$}\rput(0.5,0.15){\scriptsize{$1$}}\rput(1.5,0.15){\scriptsize{$2$}}\rput(2.5,0.15){\scriptsize{$1$}}\endpspicture$ of the factors of $\Gamma$. We deduce $[\Aut(\Gamma)\!:\!\Aut_{\calS}(\Gamma)]\!=\!\infty$ as $\smash{[\Aut(\Gamma)_{\emptyset}\!:\!\Aut_{\calS}(\Gamma)_{\emptyset}]\!=\![\mathrm{U}_{1}^{(l)}(S_{3})_{\emptyset}\!:\!\mathrm{U}_{1}^{(l)}(\langle\tau\rangle)_{\emptyset}]\!=\!\infty}$.
\end{enumerate}
\end{example}

\begin{example}\label{ex:caprace_demedts}
In \cite[Example 3.7]{CM11}, Caprace and De Medts consider the free product $\Gamma:=T_{2}\ast T_{2}$ of two lines and the group $G:=\Aut_{\calS,c}(\Gamma)$.
\end{example}

Every graph that is not $2$-connected has a cut vertex. Therefore, the following proposition contrasts the case where all factors of $\Gamma$ are $2$-connected.

\begin{proposition}
Let $\Gamma_{1},\ldots,\Gamma_{n}$ be connected graphs and $\Gamma\!:=\!\Conv_{i=1}^{n}\Gamma_{i}$. If $\Gamma$ has two isomorphic factors which contain a cut vertex then $\Aut_{\calS}(\Gamma)\lneq\Aut(\Gamma)$. Conversely, if $\Aut_{\calS}(\Gamma)\!\lneq\!\Aut(\Gamma)$ then some factor of $\Gamma$ contains a cut vertex.
\end{proposition}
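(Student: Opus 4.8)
The assertion has two halves, and I would prove each separately. \emph{Converse.} Here I would argue by contraposition: assuming every factor $\Gamma_m$ is $2$-connected, I would show that $\calS$ is exactly the set of \emph{blocks} (maximal $2$-connected subgraphs) of $\Gamma$, whence $\Aut(\Gamma)=\Aut_{\calS}(\Gamma)$ because graph automorphisms preserve the block decomposition. Each sheet $S_{(\tilde v,m)}\cong\Gamma_m$ is $2$-connected and hence lies in a unique block. Conversely, every cycle of $\Gamma$ is contained in a single sheet: a closed walk leaving a sheet $S$ must re-enter $S$ through the same vertex, since distinct sheets meet in at most one vertex (Remark~\ref{rem:def_obs}(v)) and inside a sheet only the last letter of a vertex-word is altered, so such a walk repeats a vertex and is not a cycle. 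As any two edges of a $2$-connected graph lie on a common cycle and each edge of $\Gamma$ determines its sheet (Remark~\ref{rem:def_obs}(iv)), every block lies in a single sheet; combined with maximality of blocks this gives $\calS=\{\text{blocks of }\Gamma\}$, as required.

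\emph{Forward implication, set-up.} Let $\varphi\colon\Gamma_i\to\Gamma_j$ ($i\neq j$) be an isomorphism and $c\in V(\Gamma_i)$ a cut vertex, so that $\varphi(c)$ is a cut vertex of $\Gamma_j$. By Corollary~\ref{cor:uniqueness} I may initialise $\Gamma$ with $u_i(\emptyset)=c$ and $u_j(\emptyset)=\varphi(c)$. I would fix a connected component $C$ of the graph obtained from $\Gamma_i$ by deleting $c$, put $C':=\varphi(C)$, and let $A:=\mathrm{span}_{\Gamma_i}(C\cup\{c\})$, a \emph{proper} connected subgraph of $\Gamma_i$ on which $\varphi$ restricts to an isomorphism onto $A':=\mathrm{span}_{\Gamma_j}(C'\cup\{\varphi(c)\})$. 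Let $P\subseteq V(\Gamma)$ consist of $\emptyset$, the one-letter words $v'$ with $v'\in C$, and all admissible words having such a $v'$ as their first letter, and define $P'$ analogously from $C'$. Reading off the edge description of $\Gamma$, one checks that $P\setminus\{\emptyset\}$ and $P'\setminus\{\emptyset\}$ are two distinct connected components of $\Gamma$ with $\emptyset$ deleted: once inside $P\setminus\{\emptyset\}$, each neighbour of a vertex either changes the last letter of its word, passes to a shorter prefix, or moves into a longer word with the same first letter, and all of these stay in $P$.

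\emph{Forward implication, the construction.} The crux is a rooted isomorphism $\theta\colon(\mathrm{span}_{\Gamma}(P),\emptyset)\to(\mathrm{span}_{\Gamma}(P'),\emptyset)$ which equals the $\varphi$-induced map on the part of $S_{(\emptyset,i)}$ lying in $P$ (a copy of $A$, carried into the copy of $A'$ inside $S_{(\emptyset,j)}$) and which, below this truncated top sheet, interchanges the colours $i$ and $j$ via $\varphi,\varphi^{-1}$ and fixes the remaining colours. As in the proof of Proposition~\ref{prop:aut_non_discrete_isomorphic}, $\theta$ is built by induction on word length: at a non-root vertex of the $A$-copy the attached sheets have colours $m\neq i$, at the corresponding vertex of the $A'$-copy they have colours $m\neq j$, the transposition $(i\;j)$ matches these families, and $\Gamma_i\cong\Gamma_j$ lets the $\Gamma_j$-sheets on the $P$-side correspond to the $\Gamma_i$-sheets on the $P'$-side; from the next level down all factors recur on both sides and the recursion proceeds uniformly. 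Granting $\theta$, set $\alpha:=\theta$ on $P$, $\alpha:=\theta^{-1}$ on $P'$ (consistent on $P\cap P'=\{\emptyset\}$, which both fix) and $\alpha:=\mathrm{id}$ elsewhere. Every edge of $\Gamma$ lies in $P$, in $P'$, in $V(\Gamma)\setminus(P\cup P')$, or is incident with $\emptyset$ (its other endpoint then fixed by $\alpha$), so $\alpha\in\Aut(\Gamma)$. Finally $\alpha\notin\Aut_{\calS}(\Gamma)$: the cut vertex $c$ has a $\Gamma_i$-neighbour $v''\notin C$, so the colour-$i$ edge $\{\emptyset,v''\}$ of $S_{(\emptyset,i)}$ meets $P\cup P'$ only in $\emptyset$ and is fixed by $\alpha$; and $c$ has a $\Gamma_i$-neighbour $v'\in C$, and $\alpha$ carries the colour-$i$ edge $\{\emptyset,v'\}$ to an edge of $S_{(\emptyset,j)}$, which has colour $j$. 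Hence $\alpha(S_{(\emptyset,i)})$ carries two distinct colours and so, by Remark~\ref{rem:def_obs}(iv), is not a sheet.

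\emph{Main obstacle.} I expect the construction of $\theta$ to be the delicate point: it requires making precise the recursive description of the ``partial branches'' $\mathrm{span}_{\Gamma}(P)$ and $\mathrm{span}_{\Gamma}(P')$ and verifying that they are rooted-isomorphic compatibly with $\varphi$ on the truncated top sheet. This is exactly where the hypothesis that \emph{two isomorphic} factors carry a cut vertex is used; without it the colour interchange in the sub-branches is unavailable and the two partial branches need not be isomorphic.
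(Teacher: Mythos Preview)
Your proposal is correct and follows essentially the same route as the paper. For the converse you argue by contraposition via the block decomposition, which is exactly what the paper does (with a terser reference back to the earlier discussion that $\calS$ equals the set of blocks when all factors are $2$-connected). For the forward direction, your sets $P,P'$ are the paper's branches $B_{i}^{(1)},B_{j}^{(1)}$ (together with $\emptyset$), and your automorphism $\alpha$ is the paper's $\hat\varphi$.

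The only remark worth making concerns your ``main obstacle''. The paper dispenses with the inductive construction of $\theta$ entirely: it simply sets
\[
\tilde{\varphi}(v):=\begin{cases}\varphi(v)&v\in V(\Gamma_i)\\ \varphi^{-1}(v)&v\in V(\Gamma_j)\\ v&\text{otherwise}\end{cases}
\]
and defines $\hat\varphi(v_1\ldots v_l):=\tilde\varphi(v_1)\ldots\tilde\varphi(v_l)$ on $B_i^{(1)}\sqcup B_j^{(1)}$. Because the initial values satisfy $\varphi(u_i)=u_j$, this letter-by-letter map interchanges the roles of $i$ and $j$ in the admissibility condition~\eqref{eq:admissible} and so carries admissible words to admissible words; no recursion or compatibility check is needed. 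Your $\theta$ is precisely this map restricted to $P$, so the step you flag as delicate is in fact a one-line formula.
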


\begin{proof}
Let $\varphi:\Gamma_i\to\Gamma_j$ be an isomorphism for some distinct $i,j\in\{1,\ldots,n\}$. Pick a cut vertex $u_i\!\in\! V(\Gamma_i)$ and a decomposition $\smash{\Gamma_{i}\!=\!(\Gamma_{i}^{(1)},u_{i})\#(\Gamma_{i}^{(2)},u_{i})}$ of $\Gamma_{i}$ associated to it. Let $\smash{\Gamma_{j}=(\Gamma_{j}^{(1)},u_{j})\#(\Gamma_{j}^{(2)},u_{j})}$ be the image of the decomposition of $\Gamma_{i}$ under~$\varphi$. Making arbitrary choices of $u_{k}\in V(\Gamma_{k})$ ($k\in\{1,\ldots,n\}\backslash\{i,j\}$), we may assume without loss of generality that $\textbf{u}(\emptyset) = (u_{1},u_{2}, u_{3},\ldots,u_{n})$. Define an automorphism $\hat{\varphi}\in\Aut(\Gamma)\backslash\Aut_{\calS}(\Gamma)$ as follows. Let $\smash{B_{i}^{(1)},B_{j}^{(1)}\le\Gamma}$ be the subgraphs of $\Gamma$ spanned by the sets of vertices $\tilde{v}=v_{1}\ldots v_{l}$ with $\smash{v_{1}\in\Gamma_{i}^{(1)}}$ and $\smash{v_{1}\in\Gamma_{j}^{(1)}}$ respectively, and note that these subgraphs are two components of $\Gamma\setminus \{\emptyset\}$. Define
\begin{displaymath}
 \tilde{\varphi}:\bigsqcup_{k=1}^{n}V(\Gamma_{k})\to\bigsqcup_{k=1}^{n}V(\Gamma_{k}),\ v\mapsto\begin{cases} \varphi(v) & \text{if } v\in V(\Gamma_{i}) \\ \varphi^{-1}(v) & \text{if } v\in V(\Gamma_{j}) \\ v & \text{if } v\notin V(\Gamma_{i})\sqcup V(\Gamma_{j})\end{cases}.
\end{displaymath}
Now define $\hat{\varphi}(\emptyset)=\emptyset$ and, for $\tilde{v}=v_{1}\ldots v_{l}\in V(\Gamma)$, 
\begin{displaymath}
 \hat{\varphi}(v_{1}\ldots v_{l}):=\begin{cases}\tilde{\varphi}(v_{1})\ldots\tilde{\varphi}(v_{l}) & \text{if } \tilde{v}\in V(B_{i}^{(1)})\sqcup V(B_{j}^{(1)}) \\ v_{1}\ldots v_{l} & \text{if } \tilde{v}\not\in V(B_{i}^{(1)})\sqcup V(B_{j}^{(1)})\end{cases}.
\end{displaymath}
Then $\hat{\varphi}$ is the identity on all components of $\Gamma\setminus \{\emptyset\}$ except $B_{i}^{(1)}$ and $B_{j}^{(1)}$ and interchanges those components because it interchanges $i$ and $j$ in Equation~\eqref{eq:admissible}. It also preserves edges because $\varphi$ is an isomorphism. Hence, $\hat{\varphi}\in\Aut(\Gamma)$ but not in $\Aut_{\calS}(\Gamma)$ as it does not map the $\smash{\Gamma_{i}}$- and $\smash{\Gamma_{j}}$-sheets at $\emptyset\in V(\Gamma)$ to other sheets.

Now, assume $\Aut_{\calS}(\Gamma)\lneq\Aut(\Gamma)$. If no factor of $\Gamma$ contains a cut vertex, then all factos of $\Gamma$ are $2$-connected and hence $\Aut_{\calS}(\Gamma)=\Aut(\Gamma)$, as discussed above, in contradiction to the assumption.
\end{proof}

\subsection{Structure Tree}
To analyse $\Aut_{\calS}(\Gamma)$ further, we associate a \emph{structure tree} $T$ to $\Gamma$ which captures the fact that the graph $\Gamma$ is tree-like. Define
\begin{align*}
  V(T):=V(\Gamma)\sqcup\calS \text{ and } E(T):=\left\{\{v,S\}\mid v\in V(\Gamma),\ S\in\calS \text{ and } v\in V(S)\right\}.
\end{align*}
It is immediate from the definition that $T$ is a bipartite graph with parts $V(\Gamma)$ and~$\calS$. Then, since any two distinct vertices in $V(\Gamma)$ can belong to at most one sheet, and any two distinct sheets in $\calS$ intersect in at most one vertex, $T$ is a tree. When all factors of $\Gamma$ are $2$-connected the tree $T$ is precisely the block-cut tree of~$\Gamma$, see \cite[Section 4]{Har69}. Note that $T$ is locally finite if and only if all factors of $\Gamma$ are finite. Also, the actions of $\Aut_{\calS}(\Gamma)$ on the sets $V(\Gamma)$ and~$\calS$ induce an injective homomorphism $\tau:\Aut_{\calS}(\Gamma)\to\Aut(T)$.

\begin{remark}
The map $\tau:\Aut_{\calS}(\Gamma)\to\Aut(T)$ is in fact a topological embedding: To check continuity, let $F\subseteq V(T)$ be a finite set and decompose $F=F_{V}\sqcup F_{\calS}$ into vertices $F_{V}$ coming from vertices of $\Gamma$, and vertices $F_{\calS}$ coming from sheets of~$\Gamma$. For every $S\in F_{\calS}$, choose $e_{S}\in E(S)$. Then $\tau^{-1}(\Aut(T)_{F})$ contains the open set $\smash{\Aut_{\calS}(\Gamma)_{F_{V}\cup\{e_{S}\mid S\in F_{\calS}\}}}$. To see that $\tau$ is open onto its image, note that $V(\Gamma)\subseteq V(T)$ and therefore $\tau(\Aut(\Gamma)_{F})=\Aut(T)_{F}\cap\image\tau$ for any finite set $F\subseteq V(\Gamma)$.
\end{remark}

Using the standard metric $d'$ on $V(T)$, define a metric $d$ on $\calS\subseteq V(T)$ by $d:=\frac{1}{2}d'$, as well as the map $\Vert\cdot\Vert:\Aut_{\calS}(\Gamma)\to\bbN_{0},\ \alpha\mapsto\min\{d(S,\alpha(S))\mid S\in\calS\}$.

Let $\alpha\in\Aut_{\calS}(\Gamma)$. If $\Vert\alpha\Vert=0$ then $\alpha$ leaves a sheet invariant. For example, this applies to the automorphisms of Propositions~\ref{prop:aut_intertwine} and~\ref{prop:aut_non_intertwine}. The automorphisms in the following proposition satisfy either $\Vert\alpha\Vert=0$ or $\Vert\alpha\Vert=1$.

\begin{proposition}\label{prop:aut_sheet_swap}
Let $\Gamma_{1},\ldots,\Gamma_{n}$ be connected graphs, $\Gamma\!:=\!\Conv_{i=1}^{n}\Gamma_{i}$ and $\varphi:\Gamma_{i}\to\Gamma_{j}$ an isomorphism for distinct $i,j\in\{1,\ldots,n\}$. If $\tilde{v}_{0}\!\in\! V(\Gamma)$ satisfies $\varphi(u_{i}(\tilde{v}_{0}))\!=\!u_{j}(\tilde{v}_{0})$ then there is $\hat{\varphi}\in\Aut_{\calS}(\Gamma)_{\tilde{v}_{0}}$ with $\hat{\varphi}(S_{(\tilde{v}_{0},i)})=S_{(\tilde{v}_{0},j)}$ and $\hat{\varphi}(S_{(\tilde{v}_{0},j)})=S_{(\tilde{v}_{0},i)}$.
\end{proposition}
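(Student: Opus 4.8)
The plan is to construct $\hat{\varphi}$ explicitly by defining it on vertices of $\Gamma$ by induction on distance from $\tilde{v}_0$, imitating the construction already used in Proposition~\ref{prop:aut_non_discrete_isomorphic} but now fixing $\tilde{v}_0$ rather than a point with $\lambda(\tilde{v}_0)\notin\{i,j\}$. First I would recall from Lemma~\ref{lem:paths} that every vertex $\tilde{w}\in V(\Gamma)$ has a well-defined sequence of transition points on the geodesic from $\tilde{v}_0$ to $\tilde{w}$, so that $\tilde{w}$ can be written uniquely as $\tilde{v}_0 w_{1}\ldots w_{r}$ after passing to a base point, or more carefully: delete $\tilde{v}_0$ and let $B_{(\tilde{v}_0,i)}^{0}, B_{(\tilde{v}_0,j)}^{0}$ be the unions of the connected components of $\Gamma\setminus\{\tilde{v}_0\}$ meeting $S_{(\tilde{v}_0,i)}\setminus\{\tilde{v}_0\}$, respectively $S_{(\tilde{v}_0,j)}\setminus\{\tilde{v}_0\}$. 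The key idea is that $\hat{\varphi}$ will be the identity on all components of $\Gamma\setminus\{\tilde{v}_0\}$ except those in $B_{(\tilde{v}_0,i)}^{0}\cup B_{(\tilde{v}_0,j)}^{0}$, and will swap the latter two families of components, acting by $\varphi$ on the $\Gamma_i$-letters that become $\Gamma_j$-letters and by $\varphi^{-1}$ the other way.

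Concretely, I would set $\hat{\varphi}(\tilde{v}_0):=\tilde{v}_0$ and define $\tilde{\varphi}:\bigsqcup_{k}V(\Gamma_k)\to\bigsqcup_{k}V(\Gamma_k)$ by $\tilde{\varphi}(v)=\varphi(v)$ for $v\in V(\Gamma_i)$, $\tilde{\varphi}(v)=\varphi^{-1}(v)$ for $v\in V(\Gamma_j)$, and $\tilde{\varphi}(v)=v$ otherwise. Then for $\tilde{w}=\tilde{v}_0 w_1\ldots w_r$ lying in $B_{(\tilde{v}_0,i)}^{0}\cup B_{(\tilde{v}_0,j)}^{0}$ put $\hat{\varphi}(\tilde{w}):=\tilde{v}_0\,\tilde{\varphi}(w_1)\ldots\tilde{\varphi}(w_r)$, and $\hat{\varphi}(\tilde{w})=\tilde{w}$ for all other $\tilde{w}$. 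I would verify: (1) $\hat{\varphi}(\tilde{w})$ is an admissible word — consecutive letters still lie in distinct factors since $\tilde{\varphi}$ preserves the partition $\{V(\Gamma_i)\sqcup V(\Gamma_j)$ is permuted, all others fixed$\}$, and the "$v_{l+1}\ne u$" condition is preserved because $\varphi(u_i(\tilde{v}_0))=u_j(\tilde{v}_0)$ is exactly the compatibility needed at the first letter, while at later letters $\tilde\varphi$ is a bijection; (2) edge relations of both types~\eqref{eq:nonemptyneighbours1} and~\eqref{eq:nonemptyneighbours2} are preserved, using that $\varphi$ is a graph isomorphism and that the update-function coordinates transform correctly — this is the same bookkeeping as in Proposition~\ref{prop:aut_intertwine}; (3) $\hat{\varphi}^{2}=\id$, so $\hat{\varphi}$ is a bijection and hence an automorphism; (4) $\hat{\varphi}$ maps sheets to sheets, so $\hat{\varphi}\in\Aut_{\calS}(\Gamma)$, and by construction it fixes $\tilde{v}_0$ and interchanges $S_{(\tilde{v}_0,i)}$ with $S_{(\tilde{v}_0,j)}$ since it interchanges $i$ and $j$ in the first-letter alphabet.

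The main obstacle I anticipate is handling the "$v_{l+1}\ne u_h(\tilde v)$" admissibility clause and the update-function compatibility cleanly at the junction vertex $\tilde{v}_0$ and at every subsequent transition where a $\Gamma_i$- or $\Gamma_j$-sheet is attached: one must check that after applying $\hat\varphi$ the attachment vertex $u_i$ (now playing the role of $u_j$, or vice versa) is still the correct forbidden letter, which is precisely where the hypothesis $\varphi(u_i(\tilde v_0))=u_j(\tilde v_0)$ is used, and one must propagate this through the recursion. Since the hypothesis only controls the factors at $\tilde v_0$ and $\varphi$ is an isomorphism that intertwines everything else, the induction should go through, but it requires care to state the inductive invariant — namely that for $\tilde w\in B_{(\tilde v_0,i)}^0\cup B_{(\tilde v_0,j)}^0$ one has $u_k(\hat\varphi(\tilde w))=u_k(\tilde w)$ for $k\notin\{i,j\}$ while $u_i$ and $u_j$ are swapped via $\varphi$ and $\varphi^{-1}$. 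Because this essentially repeats, with a fixed base point, the argument already carried out in Proposition~\ref{prop:aut_non_discrete_isomorphic}, I would keep the write-up short, citing that proof for the routine verifications and emphasising only the sheet-swap conclusion and the role of the compatibility hypothesis.
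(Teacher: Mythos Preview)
Your proposal is correct and follows essentially the same construction as the paper: define the letter-map $\tilde\varphi$ (apply $\varphi$ on $V(\Gamma_i)$, $\varphi^{-1}$ on $V(\Gamma_j)$, identity elsewhere), act by $\tilde\varphi$ letter-by-letter on the branches through $S_{(\tilde v_0,i)}$ and $S_{(\tilde v_0,j)}$, and fix everything else. The only difference is organisational: rather than working directly at $\tilde v_0$ with the informal ``$\tilde v_0 w_1\ldots w_r$'' notation, the paper first treats the case $\tilde v_0=\emptyset$ (where words genuinely have the form $w_1\ldots w_r$) and then handles general $\tilde v_0$ by conjugating through the base-point isomorphism $\psi:\Gamma'\to\Gamma$ of Corollary~\ref{cor:uniqueness} with $\mathbf{u}'(\emptyset)=\mathbf{u}(\tilde v_0)$ --- this is precisely your ``passing to a base point'' made rigorous, and it sidesteps the bookkeeping you flag as the main obstacle.
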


\begin{proof}
First, assume $\tilde{v}_{0}=\emptyset$ and set
\begin{displaymath}
 \tilde{\varphi}:\bigsqcup_{k=1}^{n}V(\Gamma_{k})\to\bigsqcup_{k=1}^{n}V(\Gamma_{k}),\ v\mapsto\begin{cases} \varphi(v) & \text{if } v\in V(\Gamma_{i}) \\ \varphi^{-1}(v) & \text{if } v\in V(\Gamma_{j}) \\ v & \text{if } v\notin V(\Gamma_{i})\sqcup V(\Gamma_{j})\end{cases}.
\end{displaymath}
Now, set $\hat{\varphi}(\emptyset)=\emptyset$. For $\tilde{v}=v_{1}\ldots v_{l}\in V(\Gamma)$, define
\begin{displaymath}
 \hat{\varphi}(v_{1}\ldots v_{l}):=\begin{cases}\tilde{\varphi}(v_{1})\ldots\tilde{\varphi}(v_{l}) & \text{if } v_{1}\in V(\Gamma_{i})\cup V(\Gamma_{j}) \\ v_{1}\ldots v_{l} & \text{otherwise}\end{cases}.
\end{displaymath}
Note that $\hat{\varphi}(S_{(\tilde{v}_{0},i)})=S_{(\tilde{v}_{0},j)}$ and $\hat{\varphi}(S_{(\tilde{v}_{0},j)})=S_{(\tilde{v}_{0},i)}$ by definition, that it maps sheets to sheets, and that it exchanges the roles of $i$ and $j$ in Equation~\eqref{eq:admissible}. It also preserves edge relations, because $\varphi$ is an automorphism, and $\hat{\varphi}(\emptyset)=\emptyset$. Hence $\hat{\varphi}$ belongs to $\Aut_{\calS}(\Gamma)_{\tilde{v}_{0}}$.

If $\tilde{v}_{0}\neq\emptyset$, consider the free product $\Gamma':=\Conv_{i=1}^{n}\Gamma_{i}$ initialised with $\mathbf{u}'(\emptyset)=\mathbf{u}(\tilde{v}_{0})$. Let $\psi:\Gamma'\to\Gamma$ be the isomorphism with $\psi(\emptyset)=\tilde{v}_{0}$ provided by Corollary~\ref{cor:uniqueness}. Then $\psi\circ\hat{\varphi}\circ\psi^{-1}$, where $\hat{\varphi}$ is applied to $\Gamma'$ serves.
\end{proof}

The following classification of sheet-preserving automorphisms of $\Gamma$ is analogous to \cite[Proposition 3.2]{Tit70}. A \emph{bi-infinite line of sheets} in $\Gamma$ is a map $\gamma:\bbZ\to\calS$ such that $d(\gamma(n),\gamma(m))=|n-m|$ for all $n,m\in\bbZ$. An automorphism $\alpha\in\Aut_{\calS}(\Gamma)$ acts as a \emph{translation of length $l$} along $\gamma$ if $\alpha(\gamma(n))=\gamma(n+l)$ for all $n\in\bbZ$.

\begin{theorem}\label{thm:aut_s_three_types}
Let $\Gamma_{1},\ldots,\Gamma_{n}$ be connected graphs, $\Gamma:=\Conv_{i=1}^{n}\Gamma_{i}$ and $\alpha\in\Aut_{\calS}(\Gamma)$. Then $\alpha$ satisfies exactly one of the following conditions. Either, it
\begin{enumerate}[leftmargin=2cm]
    \item[($\Vert\alpha\Vert=0$)] leaves a sheet invariant, or
    \item[($\Vert\alpha\Vert=1$)] fixes a vertex without leaving a sheet invariant, or
    \item[($\Vert\alpha\Vert\ge 1$)] is a translation of length $\Vert\alpha\Vert$ along a unique bi-infinite line of sheets.
\end{enumerate}
Furthermore, if $\Vert\alpha\Vert\le 1$ then $\alpha=\sigma\circ\beta$ for some automorphisms $\sigma,\beta\in\Aut_{\calS}(\Gamma)$ that leave a sheet invariant and stabilise a vertex respectively.
\end{theorem}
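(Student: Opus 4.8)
The plan is to transfer Tits' trichotomy through the embedding $\tau\colon\Aut_{\calS}(\Gamma)\hookrightarrow\Aut(T)$ into the automorphism group of the structure tree. Since $\alpha\in\Aut_{\calS}(\Gamma)$ maps vertices of $\Gamma$ to vertices of $\Gamma$ and sheets to sheets, $\tau(\alpha)$ preserves the bipartition $V(T)=V(\Gamma)\sqcup\calS$ and is therefore not an inversion; by Tits' classification of automorphisms of trees without inversions \cite{Tit70}, $\tau(\alpha)$ is accordingly either elliptic (it fixes a vertex of $T$) or hyperbolic (it translates along a unique axis $A\subseteq T$). The first preparatory step is to read off $\Vert\alpha\Vert$ from the geometry of $\tau(\alpha)$: with $d'$ the standard metric on $V(T)$ and $d=\tfrac12 d'$ as above, $\Vert\alpha\Vert=\tfrac12\min\{d'(S,\tau(\alpha)S)\mid S\in\calS\}$, and the standard displacement formula $d'(w,\tau(\alpha)w)=2d'(w,F)+\ell'$ — with $F$ the fixed subtree (if elliptic) or the axis (if hyperbolic) and $\ell'$ the translation length — evaluates this minimum.

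In the elliptic case $\tau(\alpha)$ fixes some $w\in V(T)$. If $w\in\calS$, then $\Vert\alpha\Vert=0$ and $\alpha$ leaves a sheet invariant, the first alternative. Otherwise $w\in V(\Gamma)$ and $\alpha$ leaves no sheet invariant; then every sheet $S$ through $w$ satisfies $\tau(\alpha)S\ne S$ while $S$ and $\tau(\alpha)S$ are both adjacent to $w$ in $T$, so $d(S,\tau(\alpha)S)=1$ and hence $\Vert\alpha\Vert=1$, the second alternative. In the hyperbolic case $\tau(\alpha)$ translates along $A$ by a length $\ell'$ which is even because $\tau(\alpha)$ preserves the bipartition; write $\ell'=2l$ with $l\ge 1$. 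The sheets lying on $A$, taken in order, form a bi-infinite line of sheets $\gamma\colon\bbZ\to\calS$ with $\tau(\alpha)(\gamma(m))=\gamma(m+l)$, unique because the axis of a hyperbolic tree automorphism is. The displacement formula gives $d(S,\tau(\alpha)S)=d'(S,A)+l$, which equals $l$ exactly on the sheets of $A$ and exceeds $l$ otherwise, so $\Vert\alpha\Vert=l$, the third alternative. The alternatives are mutually exclusive: $\Vert\alpha\Vert=0$ holds iff $\alpha$ leaves a sheet invariant; and if $\Vert\alpha\Vert\ge 1$, then $\alpha$ is elliptic — in which case the computation above forces $\Vert\alpha\Vert=1$ and puts $\alpha$ in the second alternative — or hyperbolic, the third, and a hyperbolic automorphism of $T$ fixes no vertex of $T$ whereas a translation of length $1$ displaces every vertex of $T$, so it does not belong to the second alternative.

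For the final assertion, suppose $\Vert\alpha\Vert\le 1$. If $\Vert\alpha\Vert=0$ then $\alpha=\sigma\circ\beta$ with $\sigma:=\alpha$ and $\beta:=\id$. If $\Vert\alpha\Vert=1$ and $\alpha$ fixes a vertex of $\Gamma$, then $\alpha=\sigma\circ\beta$ with $\sigma:=\id$ and $\beta:=\alpha$. By the trichotomy, the only remaining possibility is that $\alpha$ is a translation of length $1$ along the line of sheets $\gamma$. Let $x_0:=V(\gamma(0))\cap V(\gamma(1))$ be the vertex of $\Gamma$ between the two sheets on the axis. Since a vertex of $\Gamma$ lies on a single sheet of each colour (Remark~\ref{rem:def_obs}), the sheets $\gamma(0)$ and $\gamma(1)$ carry distinct colours $i$ and $j$, and $\alpha$ restricts to an isomorphism $\gamma(0)\to\gamma(1)$, so $\Gamma_i\cong\Gamma_j$. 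I would then check that $(\Gamma_i,u_i(x_0))$ and $(\Gamma_j,u_j(x_0))$ are isomorphic as rooted graphs, apply Proposition~\ref{prop:aut_sheet_swap} with $\tilde v_0=x_0$ to obtain $\beta\in\Aut_{\calS}(\Gamma)_{x_0}$ exchanging $\gamma(0)=S_{(x_0,i)}$ and $\gamma(1)=S_{(x_0,j)}$, and set $\sigma:=\alpha\circ\beta^{-1}$. Then $\alpha=\sigma\circ\beta$, the automorphism $\beta$ stabilises the vertex $x_0$, and $\sigma(\gamma(1))=\alpha(\beta^{-1}(\gamma(1)))=\alpha(\gamma(0))=\gamma(1)$, so $\sigma$ leaves a sheet invariant.

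The step I expect to be the main obstacle is the rooted-isomorphism claim $(\Gamma_i,u_i(x_0))\cong(\Gamma_j,u_j(x_0))$: for general, non-vertex-transitive factors there is no a priori reason that the two transition vertices of an axis sheet are interchanged by a symmetry, yet Proposition~\ref{prop:aut_sheet_swap} cannot be invoked without this. I plan to extract it from the existence of $\alpha$ together with Remark~\ref{rem:def_obs} — the point being that $\alpha$-equivariance of the line of sheets identifies all the doubly-pointed sheets $(\gamma(m);\text{entry},\text{exit})$ along the axis, while the fact that each coordinate of the update function is constant along every sheet outside its own factor rigidifies how the transition vertices sit inside the factors, ultimately producing an automorphism of $\Gamma_i$ carrying $u_i(x_0)$ onto the entry vertex of $\gamma(0)$, which (composed with the isomorphism $\gamma(0)\to\gamma(1)$ given by $\alpha$) is exactly what is needed; making this run for an arbitrary number of factors is the delicate part. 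A subsidiary bookkeeping issue is the translation between $\Vert\cdot\Vert$ on $\calS$ and the displacement function of $\tau(\alpha)$ on $T$, and checking that the sheets on the axis really do form a bi-infinite line of sheets in the sense defined before the theorem.
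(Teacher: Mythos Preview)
Your treatment of the trichotomy is essentially identical to the paper's: push $\alpha$ through $\tau$ into $\Aut(T)$, rule out inversions via the bipartition, and read off the three cases and the value of $\Vert\alpha\Vert$ from Tits' classification. Nothing to add there.

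The gap is in the decomposition when $\alpha$ is a translation of length~$1$. You correctly isolate the obstruction: to invoke Proposition~\ref{prop:aut_sheet_swap} at $x_0$ one needs a \emph{rooted} isomorphism $(\Gamma_i,u_i(x_0))\cong(\Gamma_j,u_j(x_0))$, not just $\Gamma_i\cong\Gamma_j$. Your plan to extract this from ``$\alpha$-equivariance of the doubly-pointed sheets'' together with update-function constancy is pointed in the right direction but does not pin down why the \emph{entry} and \emph{exit} vertices of $\gamma(0)$ should lie in the same $\Aut(\Gamma_i)$-orbit; for a non--vertex-transitive factor there is no a~priori reason they do, and at an arbitrarily chosen $x_0$ the claim may well fail.

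The paper sidesteps this by not insisting on $x_0$. Writing $i_m:=c(\gamma(m))$, pigeonhole on the finite colour set gives $k<l$ with $i_k=i_l$ and $l$ minimal, so $i_{k+1},\ldots,i_{l-1}\ne i_k$. Let $v$ be the vertex between $\gamma(k)$ and $\gamma(k+1)$. Since the path from $v$ to $\alpha^{\,l-k-1}(v)$ lies in sheets of colours $i_{k+1},\ldots,i_{l-1}$, none equal to $i_k$, the update coordinate $u_{i_k}$ is constant along it: $u_{i_k}(\alpha^{\,l-k-1}(v))=u_{i_k}(v)$. Now $\alpha^{-(l-k-1)}$ carries the $\Gamma_{i_k}$-sheet $\gamma(l)$ onto the $\Gamma_{i_{k+1}}$-sheet $\gamma(k+1)$ and sends $\alpha^{\,l-k-1}(v)$ to $v$; reading this through the sheet embeddings $\varphi$ yields an isomorphism $\Gamma_{i_k}\to\Gamma_{i_{k+1}}$ taking $u_{i_k}(v)$ to $u_{i_{k+1}}(v)$. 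That is exactly the hypothesis of Proposition~\ref{prop:aut_sheet_swap} at $\tilde v_0=v$, producing $\beta\in\Aut_{\calS}(\Gamma)_v$ swapping $\gamma(k)$ and $\gamma(k+1)$; then $\sigma:=\alpha\circ\beta$ preserves $\gamma(k+1)$. So the missing idea is: do not try to force the swap at a preassigned vertex, but use pigeonhole on colours to \emph{locate} a vertex on the axis where the update-invariance hands you the rooted isomorphism for free.
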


\begin{proof}
Consider the automorphism $\tau(\alpha)\in\Aut(T)$ induced by $\alpha\in\Aut_{\calS}(\Gamma)$. By \cite[Proposition 3.2]{Tit70}, the automorphism $\tau(\alpha)$ either fixes a vertex, inverts an edge or acts as a translation along a unique bi-infinite line in $T$. If $\tau(\alpha)$ fixes a vertex in $\calS\subseteq V(T)$ then $\alpha$ is of the first type. If $\tau(\alpha)$ fixes a vertex in $V(\Gamma)\subseteq V(T)$, but no vertex in $\calS\subseteq V(T)$, then $\alpha$ is of the second type. The case where $\tau(\alpha)$ is an inversion does not occur because $\alpha$ preserves the bipartition $V(T)=V(\Gamma)\sqcup\calS$. The final option is that $\tau(\alpha)$ acts as a translation of length $l$ along a unique bi-infinite line $\gamma':\bbZ\to V(T)$ in $T$. Without loss of generality, $\gamma'(0)\in\calS\subseteq V(T)$. Then $\alpha$ is a translation of length $l/2$ along $\gamma:\bbZ\to\calS$, where $\gamma(t)=\gamma'(2t)$, and $\Vert\alpha\Vert=l/2$.

Now consider the case $\Vert\alpha\Vert\le 1$. If $\Vert\alpha\Vert=0$ or $\alpha$ fixes a vertex then $\alpha$ decomposes trivially. We may therefore assume that $\alpha$ is a translation of length $1$ along a bi-infinite line $\gamma:\bbZ\to\calS$ of sheets. Let $i_{k}:=c(\gamma(k))\in\{1,\ldots,n\}$ for all $k\in\bbZ$. Since $\{1,\ldots,n\}$ is finite there are $k,l\in\bbZ$ with $k<l$ and $i_{k}=i_{l}$. Moreover, we may choose $l\in\bbZ_{>k}$ minimal with this property. Put $S:=\gamma(k)$. Note that the fact that $d(\gamma(k),\gamma(k+1))=1$ implies that $\gamma(k)\cap\gamma(k+1)=\{v\}$ for some $v\in V(\Gamma)$ and that $\gamma(k)$ and $\gamma(k+1)$ belong to sheets of different colours, so that $l\in\bbZ_{>k+1}$. Let $\mathbf{u}(v)=(v_{1},\ldots,v_{n})$. Then $u_{i_{k}}(\alpha^{l-k-1}(v))=u_{i_{k}}(v)=v_{i_{k}}$ as no minimal path from $v$ to $\alpha^{l-k-1}(v)$ passes through a sheet of type $i_{k}$. Hence the isomorphism $\smash{\varphi_{(v,i_{k+1})}^{-1}\circ\alpha^{-(l-k-1)}\circ\varphi_{(\alpha^{(l-k-1)}(v),i_{k})}:\Gamma_{i_{k}}\to\Gamma_{i_{k+1}}}$, where the maps $\varphi$ are the embeddings introduced in Example~ \ref{ex:mashup}\ref{item:mashup_free_product}, maps $v_{i_{k}}$ to $v_{i_{k+1}}$. Proposition~\ref{prop:aut_sheet_swap} now provides an automorphism $\beta\in\text{Aut}(\Gamma)$ which fixes $v\in V(\Gamma)$ and swaps $\gamma(k)$ and $\gamma(k+1)$. Then $\sigma:=\alpha\circ\beta$ is an automorphism of the free product graph that preserves the sheet $\alpha(S)$ and $\alpha=(\alpha\circ\beta)\circ\beta^{-1}$ as required.
\end{proof}

\begin{example}
Let $\Gamma:=T_{1}\ast T_{1}\ast T_{1}$. Then $\Gamma\cong T_{3}$, $\calS = E(T_{3})$, and $\Aut_{\calS}(\Gamma)=\Aut(\Gamma)$. Let $\alpha\in\Aut(T_{3})$. If $\alpha$ is elliptic then either $\alpha$ fixes an edge, in which case $\Vert\alpha\Vert=0$, or it does not fix any edge, in which case $\Vert\alpha\Vert=1$. If $\alpha$ is an inversion, then $\alpha$ leaves an edge and hence a sheet invariant and $\Vert\alpha\Vert=0$. (Note that inversions of $\Gamma$ are not inversions of $T$.) Finally, if $\alpha$ is hyperbolic of length $l$ then $\Vert\alpha\Vert=l$.
\end{example}

\subsection{Simplicity}

Here, we apply Tits' simplicity criterion for groups acting on trees to subgroups of $\Aut_{\calS}(\Gamma)$ via the injective homomorphism $\tau:\Aut_{\calS}(\Gamma)\to\Aut(T)$.

First, we briefly recall this criterion. See e.g. (\cite[Section 6.3]{GGT18}) for more detail. Let $C$ denote a finite or (bi)infinite path in $T$. For every $x\in V(C)$, the pointwise stabilizer $H_{C}$ induces an action $\smash{H_{C}^{(x)}\le\Aut(\pi^{-1}(x))}$ on the subtree spanned by those vertices of $T$ whose closest vertex in $C$ is~$x$; here, $\pi:V(T)\to V(C)$ denotes the closest point projection. We therefore obtain an injective homomorphism
\begin{displaymath}
 \varphi_{C}:H_{C}\to\prod\nolimits_{x\in V(C)}H_{C}^{(x)}.
\end{displaymath}
A subgroup $H\le\Aut(T)$ satisfies \emph{Property $P$} if $\varphi_{C}$ is an isomorphism for every path $C$ in $T$. Also recall that Tits defines the subgroup $G^+$ of $G$ to be that generated by the pointwise stabilisers of the edges of the tree $T$.

\begin{theorem*}[{\cite[4.5 Th{\'e}or{\`e}me]{Tit70}}]
Let $G\le\Aut(T)$. Suppose $G$ neither fixes an end nor preserves a proper subtree of $T$ setwise, and that $G$ satisfies Property $P$. Then the group $G^{+}$ is either trivial or simple.
\end{theorem*}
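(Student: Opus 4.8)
The plan is to prove the stronger assertion that \emph{every subgroup $N\le\Aut(T)$ that is normalized by $G^{+}$ and is nontrivial contains $G^{+}$}; applying this to a nontrivial normal subgroup of $G^{+}$ gives the dichotomy. Call a connected component of $T$ with a single edge removed a \emph{half-tree}; for a half-tree $H$ with bounding edge $e$, write $U(H)$ for the pointwise stabilizer in $G$ of the complementary half-tree, so that $U(H)\le G_{e}$ ``acts only inside $H$''. Applying the isomorphism of Property $P$ to the one-edge path $C=e$ gives $G_{e}=U(H)\times U(H')$ for the two half-trees $H,H'$ at $e$; hence $G^{+}=\langle U(H)\mid H\text{ a half-tree}\rangle$, and it suffices to prove $U(H)\le N$ for every half-tree $H$. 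Since $H_{1}\subseteq H_{2}$ implies $U(H_{1})\le U(H_{2})$, and $\alpha U(H)\alpha^{-1}=U(\alpha H)$ for $\alpha\in\Aut(T)$, the family $\mathcal{H}_{N}:=\{H\mid U(H)\le N\}$ is closed downward under inclusion and invariant under $G^{+}$.

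First I would collect the geometric input. Assume $G^{+}\ne\{1\}$, else there is nothing to prove. As $G^{+}\trianglelefteq G$, the fixed-vertex set of $G^{+}$ is a $G$-invariant subtree, hence empty; the set of ends fixed by $G^{+}$ is $G$-invariant, and using that $G$ fixes no end and no proper subtree one rules out its having one end, two ends (the resulting $G$-invariant line would be all of $T$, making edge stabilizers and hence $G^{+}$ trivial), or three or more ends (any three of which span a tripod whose center would be fixed). So $G^{+}$ fixes no end and therefore contains a hyperbolic element. The union of the axes of hyperbolic elements of $G^{+}$ is a nonempty $G$-invariant subtree, hence equals $T$; consequently every half-tree is entered by some such axis, and because a bi-infinite geodesic crosses each edge at most once, that axis has an end in the half-tree's boundary. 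Thus the ends of $G^{+}$-hyperbolic axes are dense in $\partial T$ and the $G^{+}$-action on $\partial T$ is minimal.

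Next come the two main steps. \emph{Propagation:} if $\mathcal{H}_{N}\ne\emptyset$, then for an arbitrary half-tree $H$ one uses minimality and the density of axis-ends to find $H_{*}\in\mathcal{H}_{N}$ and a $G^{+}$-hyperbolic $h$ whose attracting end lies in $\partial(T\setminus H)\cap\partial(T\setminus H_{*})$; then $h^{m}(H_{*})$ increases and exhausts $V(T)$ while staying away from that end, so $h^{m}(H_{*})\supseteq H$ for $m$ large, and $U(H)\le U(h^{m}H_{*})=h^{m}U(H_{*})h^{-m}\le N$; hence $\mathcal{H}_{N}$ is everything and $N\supseteq G^{+}$. \emph{Nonemptiness of $\mathcal{H}_{N}$:} take $1\ne\nu\in N$. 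If $\nu$ is not hyperbolic then, after passing to the barycentric subdivision if $\nu$ inverts an edge, $\nu$ fixes a vertex and $\mathrm{Fix}(\nu)$ is a proper subtree; by the density of axis-ends there is a $G^{+}$-hyperbolic $h$ both of whose ends avoid $\partial\,\mathrm{Fix}(\nu)$, so $\mathrm{Fix}(\nu)\cap h^{m}(\mathrm{Fix}(\nu))=\emptyset$ for $m$ large, and then $\nu\cdot h^{m}\nu^{-1}h^{-m}\in N$ is hyperbolic by the standard fact that a product of two elliptic automorphisms with disjoint fixed-trees is hyperbolic. In either case $N$ contains a hyperbolic element $t$; let $A$ be its axis and $H$ a half-tree at an edge of $A$ on the attracting side, so $t(H)\subsetneq H$ and $\bigcap_{k}t^{k}(H)$ is a single end. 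The claim is $U(H)\le N$: for $g\in U(H)$ one has $[t,g]=(tgt^{-1})g^{-1}\in N$, and, decomposing $U(H)$ via Property $P$ along the nested annuli $t^{k}(H)\setminus t^{k+1}(H)$, one combines these commutators with their $t$-conjugates so that the ``mirror'' pieces $t^{k}gt^{-k}$, supported on annuli receding toward the end, cancel, leaving $g\in N$.

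The crux, and the step I expect to be the main obstacle, is this last cancellation: a single commutator $[t,g]$ inevitably carries a translate of $g$ supported on $t(H)$, and removing it without disturbing the wanted copy of $g$ is where one must use both the independence of the ``annular'' factors of $U(H)$ supplied by Property $P$ and the fact that all of $G^{+}$ (not merely $N$ itself) normalizes $N$; this is the technical heart of Tits' original argument \cite[4.5]{Tit70}. The remaining ingredients — the Property $P$ reduction of the first paragraph, the structure-theoretic production of hyperbolic elements, and the inflation argument in the propagation step — are routine tree combinatorics.
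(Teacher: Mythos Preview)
The paper does not prove this theorem: it is quoted from \cite[Th{\'e}or{\`e}me~4.5]{Tit70} (with a pointer to \cite[Section~6.3]{GGT18} for an exposition) and then applied as a black box in Proposition~\ref{prop:aut_s_simplicity}. There is therefore no proof in the paper against which to compare your proposal.

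For what it is worth, your outline matches the architecture of Tits' own argument: the reformulation in terms of half-tree stabilisers $U(H)$, the production of a hyperbolic element in $N$ via minimality and end-freeness of the $G^{+}$-action, and the propagation step are all correct and standard. You are also right to flag the final step --- showing $U(H)\le N$ for some half-tree $H$ on the axis of a hyperbolic $t\in N$ --- as the crux. One caution: the telescoping you describe, in which one multiplies $[t,g]$ by its successive $t$-conjugates so that the mirror pieces $t^{k}gt^{-k}$ cancel, is an \emph{infinite} process; the residual conjugate never vanishes, it merely recedes toward an end, and $N$ is an abstract subgroup with no topology in which to take a limit. Closing this gap is precisely where Property~$P$ for \emph{infinite} paths (which the paper's formulation explicitly permits) does its work; your sketch invokes Property~$P$ at the right moment but does not spell out how it converts the formal telescope into an honest computation in~$G$. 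For that you will want to consult the references the paper cites.
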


Now, let $H\le\Aut_{\calS}(\Gamma)$. Given a sheet $S\in\calS$ and $v\in V(S)$, we define the subgroup $H_{(S,v)}:=\{h\in H\mid h(v)=v \text{ and } h(S)=S\}$ and put
\begin{displaymath}
  H^{+_{\calS}}:=\langle\{H_{(S,v)}\mid S\in\calS,\ v\in V(S)\}\rangle (=\tau^{-1}(\tau(H)^{+})).
\end{displaymath}
If $\tau(H)\le\Aut(T)$ preserves no proper subtree, fixes no end of $T$, and satisfies Property (P) then $\tau(H)^{+}$ is either trivial or simple by \cite[4.5 Th{\'e}or{\`e}me]{Tit70}, and hence so is $H^{+_{\calS}}\cong\tau(H)^{+}$.

\begin{proposition}\label{prop:aut_s_simplicity}
Let $\Gamma_{1},\ldots,\Gamma_{n}$ be connected, vertex-transitive graphs and let $\Gamma:=\Conv_{i=1}^{n}\Gamma_{i}$. Then $\Aut_{\calS}(\Gamma)^{+_{\calS}}$ is either trivial or simple.
\end{proposition}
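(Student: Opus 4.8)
The plan is to derive the statement from Tits' simplicity theorem, applied to the image $G:=\tau(\Aut_{\calS}(\Gamma))\le\Aut(T)$. As recalled immediately before the statement, for $H=\Aut_{\calS}(\Gamma)$ we have $H^{+_{\calS}}\cong\tau(H)^{+}$, so it suffices to verify that $G$ preserves no proper subtree of $T$ setwise, fixes no end of $T$, and satisfies Property~$(P)$. Two transitivity facts will be used throughout. First, $\Aut_{\calS}(\Gamma)$ acts transitively on $V(\Gamma)$: this is Corollary~\ref{cor:vertex_transitive}, whose proof only produces automorphisms of the type in Proposition~\ref{prop:aut_intertwine}, which lie in $\Aut_{\calS}(\Gamma)$ (indeed in $\Aut_{\calS,c}(\Gamma)$). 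Second, for every $S\in\calS$ the setwise stabiliser $\Aut_{\calS}(\Gamma)_{\{S\}}$ acts transitively on $V(S)$: given $w,w'\in V(S)$ one first moves $w$ to $w'$ by some element of $\Aut_{\calS}(\Gamma)$; since $S$ is the unique sheet of its colour through both $w$ and $w'$, this element already fixes $S$ if it preserves colours, and otherwise one corrects it by an element of $\Aut_{\calS}(\Gamma)_{w'}$ provided by Proposition~\ref{prop:aut_sheet_swap}, using vertex-transitivity of the factors to choose an isomorphism $\varphi$ between the two relevant factors with $\varphi(u_{i}(w'))=u_{j}(w')$.

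For the first condition, let $T'$ be a $G$-invariant subtree of $T$ with $T'\ne T$. Since $T$ is bipartite with parts $V(\Gamma)$ and $\calS$, the part $\calS$ is an independent set. If $T'$ meets $V(\Gamma)$, then $V(\Gamma)\subseteq V(T')$ by transitivity; as any two vertices of a sheet $S$ are joined in $T$ only via $S$, connectedness of $T'$ forces $\calS\subseteq V(T')$, so $T'=T$, a contradiction. If $T'$ misses $V(\Gamma)$, then $V(T')$ is a non-empty connected subset of the independent set $\calS$, hence a single sheet $S_{0}$; but then $\Aut_{\calS}(\Gamma)$ fixes $S_{0}$ and so preserves the proper non-empty subset $V(S_{0})\subsetneq V(\Gamma)$, contradicting transitivity on $V(\Gamma)$. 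For the second condition, suppose $G$ fixed an end $\xi$. Then for any $S\in\calS$, every element of $G_{\{S\}}$ fixes both the vertex $S$ of $T$ and the end $\xi$, hence fixes the geodesic ray from $S$ to $\xi$ pointwise, in particular fixes its first vertex, which lies in $V(S)$; this contradicts the transitivity of $\Aut_{\calS}(\Gamma)_{\{S\}}$ on $V(S)$, a set with at least two elements.

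Property~$(P)$ is the main work, though it should go through smoothly because $\Gamma$ is built from its sheets by amalgamation along single vertices (Remark~\ref{rem:def_obs}). Fix a path $C$ in $T$. Every component of the forest obtained by deleting $V(C)$ from $T$ is attached to $C$ at a unique vertex $x\in V(C)$, so that $V(T)$ is partitioned into the sets $\pi^{-1}(x)$ ($x\in V(C)$), where $\pi$ is the closest-point projection onto $C$. If $x$ is a sheet $S$ on $C$, the at most two vertices of $S$ lying on $C$ are fixed by $H_{C}$ while the rest may be permuted. Given $h\in H_{C}$, its restrictions to the subtrees $\pi^{-1}(x)$ are exactly the coordinates of $\varphi_{C}(h)$; conversely, given $h_{x}\in H_{C}^{(x)}$ for all $x\in V(C)$, one glues them into a single map on $V(\Gamma)$. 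This map is a sheet-preserving automorphism: the only vertices of $\Gamma$ at which the region controlled by one $h_{x}$ meets the rest of $\Gamma$ lie on $C$ and are therefore fixed by every $h_{x}$, and since distinct sheets share at most one vertex and every edge of $\Gamma$ lies inside a sheet, the amalgamation at such a vertex is free; hence all edge relations are respected and sheets are sent to sheets. Thus $\varphi_{C}$ is surjective, so an isomorphism, and $G$ satisfies Property~$(P)$. By Tits' theorem, $\tau(\Aut_{\calS}(\Gamma))^{+}$ is trivial or simple, and therefore so is $\Aut_{\calS}(\Gamma)^{+_{\calS}}\cong\tau(\Aut_{\calS}(\Gamma))^{+}$. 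I expect Property~$(P)$ to be the step requiring the most care; the other two conditions are short once the transitivity facts established above are in place. (One assumes $n\ge 2$ throughout, the free product reducing to $\Gamma_{1}$ for $n=1$.)
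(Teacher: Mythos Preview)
Your proof is correct and follows essentially the same route as the paper: verify that $\tau(\Aut_{\calS}(\Gamma))$ satisfies the three hypotheses of Tits' theorem (no invariant proper subtree, no fixed end, Property~$(P)$), using vertex-transitivity on $V(\Gamma)$ and transitivity of sheet stabilisers on sheet vertices. The only notable difference is that the paper obtains the latter transitivity more directly from Proposition~\ref{prop:aut_intertwine}---since $\hat{\alpha}$ preserves every $\Gamma_j$-sheet setwise and acts as $\alpha$ on each, vertex-transitivity of $\Gamma_j$ immediately gives the result---whereas you route through Corollary~\ref{cor:vertex_transitive} and correct with Proposition~\ref{prop:aut_sheet_swap}; your argument is sound but unnecessarily elaborate.
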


\begin{proof}
Let $G:=\Aut_{\calS}(\Gamma)$. We first show that $\tau(G)\le\Aut(T)$ is geometrically dense: Since the automorphisms of Corollary~\ref{cor:vertex_transitive} preserve the set of sheets $\calS$, the group $\tau(G)$ acts transitively on $V(\Gamma)\subseteq V(T)$. Hence it preserves no proper subtree of~$T$. Proposition \ref{prop:aut_intertwine} and vertex-transitivity of the factors $\Gamma_{i}$ imply that the stabiliser, $\tau(G)_{S}$, of a sheet $S\in\calS\subseteq V(T)$ acts transitively on $N_{T}(S)=V(S)\subseteq V(\Gamma)\subseteq V(T)$. In particular, none of the sheets neighbouring $S$ are fixed by $\tau(G)_{S}$. If $\tau(G)$ fixed an end of $T$, then $\tau(G)_{S}$ would have to fix every vertex on a ray beginning at~$S$, which would contradict that. Therefore, $\tau(G)$ fixes no end of $T$.

To see that $\tau(G)$ has Property (P), let $C$ be a finite or (bi)infinite path in $T$. Further, let $\pi:V(T)\to V(C)$ be the closest point projection. Given $\alpha\in G$ such that $\tau(\alpha)\in\tau(G)_{C}$ and $x\in V(C)$, consider the restriction $\tau(\alpha)_{x}$ of $\tau(\alpha)$ to $\pi^{-1}(x)$. Define $\alpha_{x}\in\Aut_{\calS}(\Gamma)$ by setting $\alpha_{x}(v):=\alpha(v)$ if either $v\in V(\pi^{-1}(C))$ or $v\in V(S)$ for some $S\in V(\pi^{-1}(C))$, and $\alpha_{x}(v):=v$ otherwise. Then $\tau(\alpha_{x})\in\tau(G)_{C}$, and we have $\tau(\alpha_{x})_{x}=\tau(\alpha)_{x}$ as well as $\tau(\alpha_{x})\in\mathrm{rist}(\pi^{-1}(C))$, the rigid stabiliser of $\pi^{-1}(C)$ introduced in Section~\ref{sec:preliminaries}, as required.
\end{proof}

\begin{example}
\hspace{0cm}
\begin{enumerate}[(i)]
 \item Let $\Gamma\!:=\!\Conv_{i=1}^{d}T_{1}\!\cong\! T_{d}$ be as in Example~\ref{ex:sheet_subgroups}\ref{item:regular_tree}. Then $\Aut_{\calS}(\Gamma)^{+_{\calS}}\!=\!\Aut(T_{d})^{+}$.
 \item Let $\Gamma:=T_{1}\ast T_{2}$ be as in Example~\ref{ex:sheet_subgroups}\ref{item:universal_transposition}. Then $\smash{\Aut_{\calS}(\Gamma)^{+_{\calS}}=\mathrm{U}_{1}(\langle\tau\rangle)^{+}}$.
\end{enumerate}
\end{example}

\subsection*{Acknowledgements}
We gratefully acknowledge research support by an AMSI Vacation Research Scholarship and the ARC Laureate Fellowship FL170100032. We owe thanks to Brian Alspach, Ben Brawn and Colin Reid for helpful discussions, and Ben Brawn for numerous improvements. Finally, the helpful comments of two anonymous reviewers were much appreciated.

\end{document}